\newcommand*{\mailto}[1]{\href{mailto:#1}{\nolinkurl{#1}}}
\newcommand{\vep}{\varepsilon}
\def\bar{\overline}
\def\ul{\underline}
\def\ol{\overline}
\def\bu{\mathbf{u}}
\def\bv{\mathbf{v}}
\def\cB{\mathcal{B}}
\newtheorem{theorem}{Theorem}[section]
\theoremstyle{remark}
\newtheorem{remark}[theorem]{Remark}
\newtheorem{corollary}[theorem]{Corollary}
\theoremstyle{proposition}
\newtheorem{proposition}[theorem]{Proposition}
\numberwithin{equation}{section}
\def\pp{p}
\def\qq{q}
\def\R{\mathbb{R}}
\def\N{\mathbb{N}}
\def\cT{\mathcal{T}}
\def\cW{\mathcal{W}}
\def\psii{u}
\date{\today}
\begin{document}
\title[Westervelt]{Relaxation of regularity for the Westervelt equation by nonlinear damping with application in acoustic-acoustic and elastic-acoustic coupling}

\author[R.\ Brunnhuber]{Rainer Brunnhuber}
\address{Institut f\"ur Mathematik\\ Alpen-Adria-Universit\"at Klagenfurt\\
Universit\"atsstra{\ss}e 65-57\\ 9020 Klagenfurt am W\"orthersee\\ Austria}
\email{rainer.brunnhuber@aau.at}
\urladdr{http://www.aau.at/~rabrunnh/}

\author[B.\ Kaltenbacher]{Barbara Kaltenbacher}
\address{Institut f\"ur Mathematik\\ Alpen-Adria-Universit\"at Klagenfurt\\
Universit\"atsstra{\ss}e 65-57\\ 9020 Klagenfurt am W\"orthersee\\ Austria}
\email{barbara.kaltenbacher@uni-klu.at}
\urladdr{http://www.aau.at/~bkaltenb/}

\author[P.\ Radu]{Petronela Radu}
\address{Department of Mathematics\\
University of Nebraska - Lincoln\\
Avery Hall 239\\ Lincoln, NE 68588\\ United States of America}
\email{pradu@math.unl.edu}
\urladdr{http://http://www.math.unl.edu/~pradu3/}

\begin{abstract}
 In this paper we show local (and partially global) in time existence for the Westervelt equation with several
versions of nonlinear damping. This enables us to prove well-posedness with spatially varying $L_\infty$-coefficients,
which includes the situation of interface coupling between linear and nonlinear acoustics as well as between linear
elasticity and nonlinear acoustics, as relevant, e.g., in high intensity focused ultrasound (HIFU) applications.
\end{abstract}

 \keywords{Nonlinear acoustics, quasilinear wave equation, nonlinear damping, local existence}\subjclass[2010]{Primary: 35L05; Secondary: 35L20}
\maketitle


\section{Introduction}
The Westervelt equation for the acoustic pressure $p$
\begin{equation}\label{Westervelt}
(p-k(p^2))_{tt} -c^2 \Delta p - b\Delta p_t = 0,
\end{equation}
where $k = \beta_a / \lambda$, $\beta_a = 1 + B/(2A)$,
$\lambda=\varrho c^2$ is the bulk modulus,
$\varrho$ is the mass density,
$c$ is the speed of sound,
$b$ is the diffusivity of sound,
$B/A$ is the parameter of nonlinearity,
is a classical model of nonlinear acoustics,
cf.~\cite{HamiltonBlackstock,manfred,Westervelt}.
In particular, it is widely used for the simulation of high intensity focused ultrasound (HIFU) which
has a broad range of technical and medical applications ranging from lithotripsy or thermotherapy
to ultrasound cleaning or welding and sonochemistry, see~\cite{abramov,manfred} and the references therein.
Note that the Westervelt equation can be written alternatively in terms of the acoustic velocity potential $\psi$,
\begin{equation}\label{Westervelt_psi}
(\psi_t-\tilde{k}(\psi_t^2))_t -c^2 \Delta \psi - b\Delta\psi_t
= 0,
\end{equation}
where $\varrho \psi_t = p$ and $\tilde{k}=\varrho k$.

An analysis of the Westervelt equation with homogeneous~\cite{KL08} and inhomogeneous~\cite{KLV10} Dirichlet
and Neumann~\cite{Neumann} boundary conditions as well as with boundary instead of interior damping~\cite{K10}
has yielded well-posedness and exponential decay of small and regular (i.e., $H^2(\Omega)$) solutions.
Here $\Omega\subseteq\R^d$, $d\in\{1,2,3\}$ is the spatial domain on which \eqref{Westervelt} is considered.

An important open question to be addressed in this paper is existence of {\em spatially less regular solutions}
as needed, e.g., for coupling with elastic or acoustic regions exhibiting different material parameters in the
simulation of a focusing silicone lens immersed in an acoustic medium.

An important feature of equation \eqref{Westervelt} (similarly also of \eqref{Westervelt_psi}) is the potential
degeneracy due to the factor $(1-2kp)$ of the second time derivative $p_{tt}$. In order to avoid degeneracy,
it is crucial to obtain an $L_\infty$-estimate of $p$ in order to bound $(1-2kp)$ away from zero.
This has been achieved until now by combining a bound of $\Delta p$ (obtained by energy estimates) with
Sobolev's embedding $H^2(\Omega)\to L_\infty(\Omega)$. In this paper, we will employ the nonlinear damping
for this purpose instead. Note that, indeed, the particular choice of the damping is to some extent left open
from the point of view of the physical modeling. We will use this freedom to devise possible damping terms
leading to existence of $H^1(\Omega)$-solutions.

We consider three modifications of the homogeneous Dirichlet boundary value problem associated with
\eqref{Westervelt} and \eqref{Westervelt_psi}, namely
\begin{equation}\label{Wpress_pLaplace}
\begin{cases}
(1-2ku)u_{tt}
-c^2\,\text{div}\,\Bigl(\nabla u+\vep|\nabla u|^{\pp-1}\nabla u\Bigr)-b\Delta u_t=2k(u_t)^2,
\\
(u,u_t)|_{t=0}=(u_0, u_1),
\\
u|_{\partial \Omega} =0,
\end{cases}
\end{equation}
\begin{equation}\label{Wpress_viscosity}
\begin{cases}
(1-2ku)u_{tt}-c^2\Delta u-b\,\text{div}\,\Bigl(((1-\delta) +\delta|\nabla u_t|^{\qq-1})\nabla u_t\Bigr)
=2k(u_t)^2,
\\
(u,u_t)|_{t=0}=(u_0, u_1),
\\
u|_{\partial \Omega} =0,
.\\
\end{cases}
\end{equation}
\begin{equation}\label{Wpot_viscosity}
\begin{cases}
\psii_{tt}-\frac{c^2}{1-2\tilde{k}\psii_t}\Delta \psii
-b\,\text{div}\,\Bigl(((1-\delta) +\delta|\nabla \psii_t|^{\qq-1})\nabla \psii_t\Bigr)
=0,
\\
(\psii,\psii_t)|_{t=0}=(\psii_0, \psii_1),
\\
\psii|_{\partial \Omega} =0,
.\\
\end{cases}
\end{equation}
in a smooth domain $\Omega\subseteq\R^d$, $d\in\{1,2,3\}$ with $\vep\geq0$, $\delta\in [0,1]$ and $\pp,\qq\geq1$.
The constant parameters $b,c$ will be positive while $k$ will not be assumed to have a particular sign.
Equations \eqref{Wpress_pLaplace} and \eqref{Wpress_viscosity} are motivated by \eqref{Westervelt}
 while  \eqref{Wpot_viscosity} comes from \eqref{Westervelt_psi}.
 In all equations we changed the notation to the typical mathematical one for solutions of PDEs, i.e., $p\to u$, and $\psi\to\psii$, respectively.

Note that if $\vep=0$ or if $\pp=1$ in \eqref{Wpress_pLaplace}, one obtains the classical Westervelt equation;
likewise, for $\delta=0$ or $\qq=1$ in \eqref{Wpress_viscosity} and \eqref{Wpot_viscosity}. Here we will analyze the problem for
\[
\vep,\delta>0\text{ and }\pp,\qq>d-1\,,
\]
where $d$ is the space dimension
such that $W^{1,\pp+1}(\Omega)$ and $W^{1,\qq+1}(\Omega)$ are continuously embedded in $L_\infty(\Omega)$; precise conditions for these inclusions to hold will be discussed later in the paper. In Sections \ref{secWpot_viscosity} and \ref{secWelastic_viscosity} we will have to to use the stronger assumption $\qq\geq 3$,
since there we will need $W^{1,\qq+1}(\Omega)$ to be continuosly embedded in $W^{1,4}(\Omega)$.

In all three cases the damping terms enable us to derive an $L_\infty$-estimate on $u$ (or $u_t$)
and thus avoid degeneracy of the coefficient $1-2ku$ (or $1-2ku_t$). Namely, for the damping term
of \eqref{Wpress_pLaplace} we have, using homogeneity of the Dirichlet boundary values of $u$,
\begin{equation}
\label{estLinftyW1}
\begin{aligned}
&|u(t,x)|\leq C_{W_0^{1,\pp+1},L_\infty}^\Omega
|\nabla u(t)|_{L_{\pp+1}}\\
&\qquad =C_{W_0^{1,\pp+1},L_\infty}^\Omega
\Bigl(|\nabla u_0|_{L_{\pp+1}}^{\pp+1}
+\int_0^t\frac{d}{dt}\int_\Omega|\nabla u(s,y)|^{\pp+1}\,dy\,ds
\Bigr)^{\frac{1}{\pp+1}}\\
&\qquad =C_{W_0^{1,\pp+1},L_\infty}^\Omega\Bigl[
|\nabla u_0|_{L_{\pp+1}}^{\pp+1}\\
&\qquad \quad +
\int_0^t\int_\Omega (\pp+1)|\nabla u(s,y)|^{\pp-1}\nabla u(s,y) \nabla u_t(s,y) \, dy\, ds \Bigr]^{\frac{1}{\pp+1}} \\
&\qquad =C_{W_0^{1,\pp+1},L_\infty}^\Omega\Bigl[
|\nabla u_0|_{L_{\pp+1}}^{\pp+1}\\
& \qquad \quad
-(\pp+1)\int_0^t\int_\Omega \text{div}\,\Bigl(|\nabla u(s,y)|^{\pp-1}\nabla u(s,y)\Bigr) u_t(s,y) \, dy\, ds \Bigr]^{\frac{1}{\pp+1}}, \\
\end{aligned}
\end{equation}
where $C_{W_0^{1,\pp+1},L_\infty}^\Omega$ denotes a combination of the constant in the Sobolev embedding
$W_0^{1,\pp+1}(\Omega)\to L_\infty(\Omega)$ with the one from the Poincar\'{e}-Friedrichs inequality.\\
For the damping term of \eqref{Wpress_viscosity}, we have
\begin{equation}
\label{estLinftyW2}
\begin{aligned}
&|u(t,x)|\leq C_{W_0^{1,\qq+1},L_\infty}^\Omega
|\nabla u(t)|_{L_{\qq+1}}\\
&\qquad=C_{W_0^{1,\qq+1},L_\infty}^\Omega
\Big|\nabla u_0+\int_0^t \nabla u_t(s)\,ds\Big|_{L_{\qq+1}(\Omega)} \\
&\qquad \leq C_{W_0^{1,\qq+1},L_\infty}^\Omega
\Bigl[|\nabla u_0|_{L_{\qq+1}}
+\Big|\int_0^t\nabla u_t(s)\,ds\Big|_{L_{\qq+1}}\Bigr]\\
&\qquad \leq C_{W_0^{1,\qq+1},L_\infty}^\Omega
\Bigl[|\nabla u_0|_{L_{\qq+1}}
+\Bigl(t^{\qq}\int_0^t\int_\Omega \Big|\nabla u_t(s,y)\Big|^{\qq+1} \, dy \,ds
\Bigr)^{\frac{1}{\qq+1}}\Bigr] \\
&\qquad =C_{W_0^{1,\qq+1},L_\infty}^\Omega
\Bigl[|\nabla u_0|_{L_{\qq+1}}\\
&\qquad \quad +\Bigl(-t^q\int_0^t\int_\Omega \text{div}\,\Bigl(|\nabla u_t(s,y)|^{\qq-1}\nabla u_t(s,y)\Bigr) u_t(s,y) \, dy\, ds \Bigr)^{\frac{1}{\qq+1}}\Bigr].
\end{aligned}
\end{equation}
Finally, for \eqref{Wpot_viscosity}, we get, replacing $u$ by $u_t$ and $\pp$ by $\qq$ in  \eqref{estLinftyW1}
\begin{equation}
\label{estLinftyW3}
\begin{aligned}
&|\psii_t(t,x)|\leq
C_{W_0^{1,\qq+1},L_\infty}^\Omega\Bigl[
|\nabla \psii_1|_{L_{\qq+1}}^{\qq+1}\\
 & \qquad -(\qq+1)\int_0^t\int_\Omega \text{div}\,\Bigl(|\nabla \psii_t(s,y)|^{\qq-1}\nabla \psii_t(s,y)\Bigr)
\psii_{tt}(s,y) \, dy\, ds\Bigr]^{\frac{1}{\qq+1}}. \\
\end{aligned}
\end{equation}
Hence, in all three cases, multiplication of the damping term with $u_t$ (or $\psii_{tt}$) and integration over time and space
will provide us with the desired $L_\infty$-estimate on $u$ (or $\psii_t$).
In this manner, we avoid estimates on $\Delta u$ in order to conclude $L_\infty$-boundedness of $u$,
a strategy that has been used in previous studies of the Westervelt equation~\cite{KL08,KLV10}.

A major advantage of this relaxed regularity is the fact that it can be expected to enable solutions
of the modified Westervelt equation to be coupled with other equations or with jumping coefficients,
a situation that is also of high practical relevance for HIFU devices based on the use of acoustic lenses
immersed in a fluid medium~\cite{manfred}.

We will first consider an acoustic-acoustic coupling. This can be modeled by the presence of spatially varying,
namely piecewise constant coefficients, in a pressure formulation (cf.~\cite{BambergerGlowinskiTran} for the linear case)
\begin{equation}\label{Wacoustic_viscosity}
\begin{cases}
\frac{1}{\lambda(x)}(1-2k(x)u)u_{tt}-\text{div}\,(\frac{1}{\varrho(x)}\nabla u)\\ -\text{div}\,\Bigl(b(x)((1-\delta(x)) +\delta(x)|\nabla u_t|^{\qq-1})\nabla u_t\Bigr)
=\frac{2k(x)}{\lambda(x)}(u_t)^2,
\\
(u,u_t)|_{t=0}=(u_0, u_1),
\\
u|_{\partial \Omega} =0
.\\
\end{cases}
\end{equation}
Here we have emphasized space dependence of the coefficients while suppressing space and
time dependence of $u$ in the notation as before.
While in the above equations \eqref{Wpress_pLaplace}, \eqref{Wpress_viscosity}, \eqref{Wpot_viscosity}
and in the regions of nonlinearity (i.e., $k\not=0$) in \eqref{Wacoustic_viscosity}, strong damping $b>0$ is needed for
ensuring well-posedness, we may set $b=0$ in regions where  $k$ vanishes.
This corresponds to the physically relevant situation of a linearly acoustic (possibly to be considered as approximation
to linearly elastic) silicone lens immersed in a nonlinear acoustic fluid.

The physically more relevant model requires a linearly elastic model for the lens. Therefore, we consider a velocity based formulation for elastic-acoustic coupling (see also the displacement based formulation in~\cite{BermudezRodriguezSantamarina} and the velocity potential formulation in~\cite{FlemischKaltenbacherWohlmuth}, both for the linear case)
\begin{equation}\label{Welastic_viscosity}
\begin{cases}
\varrho(x) \bu_{tt}-\cB^T\frac{1}{1-2\tilde{k}(x)\psi_t} [c](x)\cB\bu\\
\qquad +\cB^T\Bigl(((1-\delta(x)) +\delta(x)|\cB \bu_t|^{\qq-1})[b](x)\cB \bu_t\Bigr)
=0,
\\
(\bu,\bu_t)|_{t=0}=(u_0, u_1),
\\
\bu|_{\partial \Omega} =0
,\\
\end{cases}
\end{equation}
where $\mathbf{u}$ plays the role of the velocity, $\psi$ determines the gradient part in the Helmholtz decomposition of $\mathbf{u}$,
$$\mathbf{u}=\nabla\psi+\nabla\times\mathbf{A}\,,$$
and the first order differential operator $\cB$ is given by
$$ \cB=\left( \begin{array}{cccccc}
        \partial_{x_1} & 0 & 0 & 0 & \partial_{x_3} & \partial_{x_2}\\
        0 & \partial_{x_2} & 0 & \partial_{x_3} & 0 & \partial_{x_1}\\
        0 & 0 & \partial_{x_3} & \partial_{x_2} & \partial_{x_1} & 0
                      \end{array} \right)^T
.$$

Note that $\psi$ can be determined from $\bu$ as the solution of
\[
-\Delta \psi=-\text{div}\,\bu
\]
which is unique, e.g., if we imposed homogeneous Dirichlet boundary conditions on $\psi$, which we will do below.
Here we think of $\Omega$ being decomposed into an acoustic (fluid) and an elastic (solid) subdomain
$$\Omega=\Omega_f\cup\Omega_s\,, \quad \Omega_f\cap\Omega_s=\emptyset\,.$$

No particular smoothness assumption will have to be imposed on the domains $\Omega_f$ and $\Omega_s$ as these subdomains are just characterized as the sets of points where the $L_\infty$-coefficients $\varrho$, $c$, $\tilde{k}$, and $b$ take on certain values that are typical for fluid and solid, respectively.
The acoustic region $\Omega_f$ is characterized as the region of vanishing shear modulus $\mu=0$ in the tensor
$$[c]=\left(\begin{array}{cccccc}
\lambda+2\mu&\lambda&\lambda&0&0&0\\
\lambda&\lambda+2\mu&\lambda&0&0&0\\
\lambda&\lambda&\lambda+2\mu&0&0&0\\
0&0&0&\mu&0&0\\
0&0&0&0&\mu&0\\
0&0&0&0&0&\mu\end{array}\right),
$$
i.e.,
$$\lambda>0\,, \quad
\mu\left\{\begin{array}{ll}
=0 &\mbox{in }\Omega_f\\
>0 &\mbox{in }\Omega_s\end{array}\right. .
$$
(Note that $[c]$ could be set to any symmetric positive definite $6\times6$ matrix with entries in $L^\infty(\Omega)$ in the  elastic region $\Omega_s$, thus allowing for anisotropic elasticity.)
The tensor $[b](x)$ is assumed to be symmetric nonnegative definite and to have the same structure as $[c](x)$ in the fluid region, i.e.,
$$[b]=\left(\begin{array}{cccccc}
\hat{b}&\hat{b}&\hat{b}&0&0&0\\
\hat{b}&\hat{b}&\hat{b}&0&0&0\\
\hat{b}&\hat{b}&\hat{b}&0&0&0\\
0&0&0&0&0&0\\
0&0&0&0&0&0\\
0&0&0&0&0&0\end{array}\right) \mbox{ in }\Omega_f \mbox{ with } \hat{b}(x)\geq\ul{b}>0 \mbox{ in }\Omega_{nl}
\,.
$$
From the Westervelt equation on the subdomain of nonlinearity
$$\Omega_{nl}=\{x\in\Omega \colon \tilde{k}(x)\not=0\}\subseteq\Omega_f$$
we have that
the vector potential $\mathbf{A}$ may be set to zero in the acoustic region ($\psi$ equals the acoustic velocity potential there) and therewith
$$[c]\cB\bu=\lambda\Delta\psi \mathbf{e}\mbox{ in }\Omega_f, \mbox{ where }
\mathbf{e}=(1,1,1,0,0,0)^T,$$
so that on $\Omega_f$ the PDE in \eqref{Welastic_viscosity} becomes
$$
\varrho \nabla\psi_{tt}-\cB^T\frac{\lambda}{1-2\tilde{k}\psi_t}\Delta\psi \mathbf{e}
-\cB^T\Bigl(((1-\delta) +\delta|\Delta\psi_t|^{\qq-1})\hat{b}\Delta\psi_t\mathbf{e}\Bigr)
=0.
$$
Multiplying with an arbitrary vector valued test function $\bv=\nabla w+\nabla\times W$ compactly supported in
$\Omega_f$, integrating by parts on $\Omega$, using the fact that
$$\mathbf{e}^T\cB \bv =\Delta w\,, \quad \text{div}\,\bv=\Delta w$$
and assuming that $\varrho$ is constant on $\Omega_f$ such that
$\varrho \nabla\psi_{tt}= \nabla(\varrho\psi_{tt})$,
we arrive at
$$\int_{\Omega_f} \Delta w
\Bigl(\varrho \psi_{tt}-\frac{\lambda}{1-2\tilde{k}\psi_t}\Delta\psi
-((1-\delta) +\delta|\Delta\psi_t|^{\qq-1})\hat{b}\Delta\psi_t\Bigr)\, dx
=0
$$
for any smooth compactly supported $w$. With
$c^2=\frac{\lambda}{\varrho}$, $b=\frac{\hat{b}}{\varrho}$ we get \eqref{Wpot_viscosity}
which is (up to the damping term) equivalent to \eqref{Westervelt_psi}.
\medskip

In order to be able to make use of the embeddings
\begin{eqnarray}
H_0^1(\Omega)\to L_4(\Omega) \mbox{ with norm }C_{H_0^1,L_4}^\Omega\,,
\label{H01L4}\\
W_0^{1,q+1}(\Omega)\to L_\infty(\Omega) \mbox{ with norm }C_{W_0^{1,q+1},L_\infty}^\Omega\,,
\label{W01qp1linfty}
\end{eqnarray}
in this paper, we will impose zero Dirichlet boundary conditions and assume that $\Omega\subset\R^d$ is an open bounded set with Lipschitz boundary, $d\in\{1,2,3\}$.

Actually, \eqref{H01L4} allows to increase the space dimension even to $d=4$. However, this case is not of practical relevance.
The proofs below show that the embedding $H_0^1(\Omega)\to L_r(\Omega)$ with $r=4$
indeed suffices and we do not need to use the maximal possible exponent $r=6$ in $\R^3$.

Throughout the paper we will use Poincar{\'e}'s inequality; on convex domains the inequality reads (see for example page 364 in \cite{Leoni})
\[
\int_{\Omega} |u(x)-u_{\Omega}|^p dx \leq C(d,p) (\mathrm{diam}(\Omega))^p \int_{\Omega} |\nabla u(x)|^p dx.
\]
where $u_{\Omega}$ is the average of $u$ over the domain $\Omega$ and $\mathrm{diam}(\Omega)$ denotes the diameter of the domain $\Omega$. The Poincar{\'e} constant plays a role in the existence time for the solutions, however, our focus is on establishing local well-posedness of solutions and not on estimating the time of existence.

Several times we will make use of Young's inequality in the form
\begin{equation}\label{abeps}
a b\leq \epsilon a^r+ C(\epsilon,r)b^{\frac{r}{r-1}}
\end{equation}
with
\begin{equation}\label{Cepsr}
C(\epsilon,r)
=(r-1)r^{\frac{r}{r-1}} \epsilon^{-1/(1-r)} \,.
\end{equation}

\medskip
The remainder of this paper consists of five sections, each of them dealing with one of the equations \eqref{Wpress_pLaplace},
\eqref{Wpress_viscosity}, \eqref{Wpot_viscosity}, \eqref{Wacoustic_viscosity} and \eqref{Welastic_viscosity}, respectively.
We prove local in time existence (for sufficiently small times and initial data) for all of them. Global existence and exponential
decay can only be established for the $p$-Laplace damping case \eqref{Wpress_pLaplace}, which lacks uniqueness, though.

\section{The Westervelt equation in acoustic pressure formulation with nonlinear strong damping \eqref{Wpress_viscosity}}
\label{secWpress_viscosity}
First we consider the initial boundary value problem
\begin{equation}\label{W2lin}
\begin{cases}
(1+\alpha) u_{tt}-c^2\Delta u-b\,\text{div}\Bigl(
((1-\delta) +\delta|\nabla u_t|^{q-1})\nabla u_t\Bigr)+fu_t=g,
\\
(u,u_t)|_{t=0}=(u_0, u_1),
\\
u|_{\partial \Omega} =0
.\\
\end{cases}
\end{equation}

\begin{proposition} \label{prop:W2lin}
\begin{enumerate}
\item[(i)]
Let $T>0$, $c^2,b,\delta,1-\delta>0$, $q\geq1$ and assume that
\begin{itemize}
\item
$\alpha\in C(0,T;L_\infty(\Omega))$, $\alpha_t\in L_\infty(0,T;L_2(\Omega))$,
$-1<-\underline{\alpha}\leq \alpha(t,x)\leq \overline{\alpha}$,
\item
$f\in L_\infty(0,T;L_{2}(\Omega))$,
\item
$g\in (L_{q+1}(0,T;W_0^{1,q+1}(\Omega)))^*$,
\item
$u_0\in H_0^1(\Omega)$, $u_1\in L_2(\Omega)$.
\end{itemize}
with
$$\|f-\frac12\alpha_t\|_{L_\infty(0,T;L_{2}(\Omega))}\leq\bar{b}
<\frac{b(1-\delta)}{(C_{H_0^1,L_4}^\Omega)^2}\,. $$

Then \eqref{W2lin} has a weak solution
\begin{align*}
u\in \tilde{X} :=~&C^1(0,T;L_2(\Omega))\cap C(0,T;H_0^1(\Omega))\\
&\cap W^{1,q+1}(0,T;W_0^{1,q+1}(\Omega)).
\end{align*}
Moreover, this solution is unique in $\tilde{X}$ and it satisfies the energy estimate
\begin{eqnarray}
\lefteqn{\frac12\left[\int_\Omega (1+\alpha)(u_t)^2\, dx
+ c^2 |\nabla u|_{L_2(\Omega)}^2 \right]_0^t}
\nonumber\\
&&+ \int_0^t \Bigl( \hat{b}|\nabla u_t|_{L_2(\Omega)}^2
 +\tfrac{b\delta}{2}
|\nabla u_t|_{L_{q+1}(\Omega)}^{q+1}\Bigr)\, ds
\nonumber\\
& \leq &
C(\tfrac{b\delta}{2},q+1) \|g\|_{(L_{q+1}(0,T;W_0^{1,q+1}(\Omega)))^*}^{\frac{q+1}{q}}\,.
\label{enest}
\end{eqnarray}
with
\[
\hat{b}=b(1-\delta)- (C_{H_0^1,L_4}^\Omega)^2 \bar{b}
\]
and $C(\epsilon,r)$ as in \eqref{Cepsr}.

\item[(ii)]
If in addition to (i) $\alpha$ is independent of time, we have the estimate
\begin{equation}
\label{utt}
\begin{aligned}
&\|(1+\alpha)u_{tt}\|_{(L_{q+1}(0,T;W_0^{1,q+1}(\Omega)))^*}\\
&\leq\bar{C} \left\{ \|u\|_{\tilde{X}}
+ \|g\|_{(L_{q+1}(0,T;W_0^{1,q+1}(\Omega)))^*}
+ \|u\|_{\tilde{X}} \|f\|_{L_\infty(0,T;L_2(\Omega))} \right\}.
\end{aligned}
\end{equation}
for some constant $\bar{C}>0$.

\item[(iii)]
If in addition to (i)
\begin{itemize}
\item $f\in L_\infty(0,T;L_4(\Omega))$
with $\|f\|_{L_\infty(0,T;L_4(\Omega))}\leq\bar{\bar{b}}$,
\item $g\in L_2(0,T;L_2(\Omega))$,
\item $u_1\in H_0^1(\Omega)\cap W_0^{1,q+1}(\Omega)$,
\end{itemize} then
\begin{align*}
u\in X:=~& H^2(0,T;L_2(\Omega))\cap C^1(0,T;W_0^{1,q+1}(\Omega)) \,,
\end{align*}
and there exist constants $c_1,C_1$ depending only on $\underline{\alpha}$, $b$, $c$, $\delta$, $C_{H_0^1,L_4}^\Omega$, $\bar{\bar{b}}$,
 such that
\begin{eqnarray}
E_1[u](t)
+c_1\int_0^t \Bigl(
|u_{tt}|_{L_2(\Omega)}^2
+|\nabla u_t|_{L_2(\Omega)}^2
+|\nabla u_t|_{L_{q+1}(\Omega)}^{q+1}
\Bigr)\, ds
&&\nonumber\\
 \leq  C_1 (E_1[u](0)+\|g\|_{L_2(0,T;L_2(\Omega))}
+\|f\|_{L_\infty(0,T;L_4(\Omega))}^2
)
&&
\label{enest1}
\end{eqnarray}
for
\begin{equation}\label{defE1}
E_1[u](t)=\left[|u_t|_{L_2(\Omega)}^2
+ |\nabla u|_{L_2(\Omega)}^2
+ |\nabla u_t|_{L_2(\Omega)}^2
+ |\nabla u_t|_{L_{q+1}(\Omega)}^{q+1}
\right](t).
\end{equation}
\end{enumerate}
\end{proposition}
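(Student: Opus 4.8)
The plan is to prove all three parts by a Faedo--Galerkin approximation, deriving the a priori bounds of (i) and (iii) at the discrete level so that they survive the passage to the limit, while (ii) is read off the equation. Fix a basis $\{w_j\}_{j\in\N}$ of $H_0^1(\Omega)\cap W_0^{1,q+1}(\Omega)$ (e.g.\ the Dirichlet--Laplacian eigenfunctions, which also lie in $W_0^{1,q+1}(\Omega)$), put $V_N=\mathrm{span}\{w_1,\dots,w_N\}$, and let $u^N\in V_N$ solve the Galerkin system obtained by testing \eqref{W2lin} with $w_1,\dots,w_N$, with $u^N(0),u_t^N(0)$ the $V_N$-projections of $u_0,u_1$. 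Since $1+\alpha\geq 1-\underline\alpha>0$ the mass matrix is invertible and the system is a Carath\'eodory ODE with continuous monotone nonlinearity $\nabla u_t\mapsto|\nabla u_t|^{q-1}\nabla u_t$, so $u^N$ exists locally and the estimate below extends it to $[0,T]$. Testing with $u_t^N$ and integrating over $(0,t)$: the term $\int_\Omega(1+\alpha)u_{tt}^Nu_t^N$ gives $\tfrac12\bigl[\int_\Omega(1+\alpha)(u_t^N)^2\bigr]_0^t-\tfrac12\int_0^t\int_\Omega\alpha_t(u_t^N)^2$, the elastic term $c^2\int_\Omega\nabla u^N\cdot\nabla u_t^N$ gives $\tfrac{c^2}{2}\bigl[|\nabla u^N|_{L_2(\Omega)}^2\bigr]_0^t$, the damping term gives $\int_0^t\bigl(b(1-\delta)|\nabla u_t^N|_{L_2(\Omega)}^2+b\delta|\nabla u_t^N|_{L_{q+1}(\Omega)}^{q+1}\bigr)$, and $fu_t^N$ gives $\int_0^t\int_\Omega f(u_t^N)^2$. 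The indefinite contributions combine into $\int_0^t\int_\Omega(f-\tfrac12\alpha_t)(u_t^N)^2$, bounded via H\"older and \eqref{H01L4} by $\bar b\,(C_{H_0^1,L_4}^\Omega)^2\int_0^t|\nabla u_t^N|_{L_2(\Omega)}^2$ and hence absorbed into $b(1-\delta)\int_0^t|\nabla u_t^N|_{L_2(\Omega)}^2$ precisely because $\bar b<b(1-\delta)/(C_{H_0^1,L_4}^\Omega)^2$, leaving the coercive coefficient $\hat b$; writing $\|g\|_*$ for the norm in $(L_{q+1}(0,T;W_0^{1,q+1}(\Omega)))^*$, the right-hand side $\int_0^t\langle g,u_t^N\rangle\leq\|g\|_*\,\|u_t^N\|_{L_{q+1}(0,t;W_0^{1,q+1})}$ is, after Poincar\'e--Friedrichs and Young's inequality \eqref{abeps}--\eqref{Cepsr} with $r=q+1$, $\epsilon=\tfrac{b\delta}{2}$, absorbed into $b\delta\int_0^t|\nabla u_t^N|_{L_{q+1}(\Omega)}^{q+1}$ up to the summand $C(\tfrac{b\delta}{2},q+1)\|g\|_*^{(q+1)/q}$. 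This gives \eqref{enest} uniformly in $N$, hence a bound on $u^N$ in $\tilde X$.

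For the passage to the limit, extract a subsequence with $u^N$ converging weakly-$*$ in the spaces comprising $\tilde X$; the linear terms pass by weak/weak-$*$ convergence, and the damping vector $A_N:=((1-\delta)+\delta|\nabla u_t^N|^{q-1})\nabla u_t^N$, bounded in $L_{(q+1)/q}(0,T;L_{(q+1)/q}(\Omega))$, converges weakly to some $A$, identified as $((1-\delta)+\delta|\nabla u_t|^{q-1})\nabla u_t$ by Minty's monotonicity trick, using monotonicity of $\xi\mapsto((1-\delta)+\delta|\xi|^{q-1})\xi$ together with the energy identity to get $\limsup_N\int_0^T\int_\Omega A_N\cdot\nabla u_t^N\leq\int_0^T\int_\Omega A\cdot\nabla u_t$. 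This produces a weak solution $u\in\tilde X$ obeying \eqref{enest}. Uniqueness: the difference $w$ of two $\tilde X$-solutions has zero data and solves the homogeneous equation; testing with $w_t$, integrating, using strict monotonicity of the damping operator, and absorbing $(f-\tfrac12\alpha_t)w_t^2$ as before shows $\tfrac12(1-\underline\alpha)|w_t(t)|_{L_2(\Omega)}^2+\hat b\int_0^t|\nabla w_t|_{L_2(\Omega)}^2\leq 0$, whence $w\equiv0$. For (ii), solve \eqref{W2lin} for $(1+\alpha)u_{tt}$ and pair with $\phi\in L_{q+1}(0,T;W_0^{1,q+1}(\Omega))$; integrating by parts in space, and using H\"older in space and time, the embeddings $L_{q+1}(\Omega)\hookrightarrow L_{(q+1)/q}(\Omega)$ and (for $fu_t$) $W_0^{1,q+1}(\Omega)\hookrightarrow H_0^1(\Omega)\hookrightarrow L_4(\Omega)$ on the bounded $\Omega$, and Poincar\'e, each summand is bounded as in \eqref{utt}, the nonlinear summand carrying the factor $\|\nabla u_t\|_{L_{q+1}(0,T;L_{q+1}(\Omega))}^q$, so $\bar C$ may be taken to depend on an a priori bound for $\|u\|_{\tilde X}$ (equivalently, the first bracket term of \eqref{utt} is read as $\|u\|_{\tilde X}+\|u\|_{\tilde X}^q$).

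For (iii), since $\alpha$ is now time-independent the Galerkin system is a genuine second-order ODE with $u_{tt}^N\in V_N$, so one may test the $N$-th equation with $u_{tt}^N$: $\int_\Omega(1+\alpha)(u_{tt}^N)^2\geq(1-\underline\alpha)|u_{tt}^N|_{L_2(\Omega)}^2$; the elastic term $c^2\int_\Omega\nabla u^N\cdot\nabla u_{tt}^N$ equals $\frac{d}{dt}\bigl(c^2\int_\Omega\nabla u^N\cdot\nabla u_t^N\bigr)-c^2|\nabla u_t^N|_{L_2(\Omega)}^2$; the damping term equals $\frac{d}{dt}\bigl(\tfrac{b(1-\delta)}{2}|\nabla u_t^N|_{L_2(\Omega)}^2+\tfrac{b\delta}{q+1}|\nabla u_t^N|_{L_{q+1}(\Omega)}^{q+1}\bigr)$; the term $\int_\Omega fu_t^Nu_{tt}^N$ is bounded, via H\"older, \eqref{H01L4} and Young, by $\epsilon|u_{tt}^N|_{L_2(\Omega)}^2+C_\epsilon\|f\|_{L_4(\Omega)}^2|\nabla u_t^N|_{L_2(\Omega)}^2$ --- this is where $f\in L_\infty(0,T;L_4(\Omega))$ is needed --- and $\int_\Omega gu_{tt}^N\leq\epsilon|u_{tt}^N|_{L_2(\Omega)}^2+C_\epsilon|g|_{L_2(\Omega)}^2$. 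Integrating over $(0,t)$, the endpoint term at $t=0$ involves only $|\nabla u_0|_{L_2(\Omega)}$, $|\nabla u_1|_{L_2(\Omega)}$ and $|\nabla u_1|_{L_{q+1}(\Omega)}$ --- hence the assumption $u_1\in H_0^1(\Omega)\cap W_0^{1,q+1}(\Omega)$ --- the endpoint term at $t$ is reabsorbed by Young using a fraction of the coercive $|\nabla u_t^N(t)|_{L_2(\Omega)}^2$ and the already-controlled $|\nabla u^N(t)|_{L_2(\Omega)}^2$, and the residual volume terms $\int_0^t(c^2+C_\epsilon\|f\|_{L_4(\Omega)}^2)|\nabla u_t^N|_{L_2(\Omega)}^2$ are bounded by \eqref{enest}. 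Adding \eqref{enest} back yields \eqref{enest1} uniformly in $N$, and letting $N\to\infty$ --- with lower semicontinuity of norms and the Minty argument again, now with the stronger bound of $u_t^N$ in $L_\infty(0,T;W_0^{1,q+1}(\Omega))$ --- gives $u\in X$ and \eqref{enest1}.

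The main obstacle is arranging the absorptions so that the strong damping genuinely dominates the sign-indefinite terms: in (i), extracting the sharp threshold $\bar b<b(1-\delta)/(C_{H_0^1,L_4}^\Omega)^2$ and the clean coefficient $\hat b$ from the combined $f$/$\alpha_t$ term while simultaneously swallowing $\langle g,u_t\rangle$ into the $L_{q+1}$-part of the damping; and in (iii), handling the cross term $\frac{d}{dt}\bigl(c^2\int_\Omega\nabla u\cdot\nabla u_t\bigr)$, which is not a sign-definite energy, so that its endpoint values are reabsorbed (not allowed to accumulate) and the leftover $-c^2|\nabla u_t|_{L_2(\Omega)}^2$ is controlled through the part-(i) bound. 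The other delicate point is the limit passage in the $q$-growth damping, where no strong convergence of $\nabla u_t^N$ is available, so the monotonicity (Minty) step is indispensable; the same monotonicity also drives uniqueness in $\tilde X$.
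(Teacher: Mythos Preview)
Your proof is correct and follows essentially the same Galerkin-plus-energy-estimates route as the paper, with a few technical variations: you skip the paper's preliminary smoothing of $\alpha,f,g$ (invoking Carath\'eodory ODE theory directly, whereas the paper mollifies first and then takes a second limit), you make the Minty monotonicity step for identifying the weak limit of the nonlinear damping explicit (the paper simply asserts $|\nabla u_{n,t}|^{q-1}\nabla u_{n,t}\rightharpoonup|\nabla u_t|^{q-1}\nabla u_t$ without comment), and for (ii) you read the estimate directly off the weak equation rather than the paper's Galerkin-level argument via a weighted-$L_2$ orthogonal projection and a Hahn--Banach step to control $|v_n|_{W_0^{1,q+1}}\leq|v|_{W_0^{1,q+1}}$. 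Your route is a bit shorter on all three counts.

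One slip: you open (iii) with ``since $\alpha$ is now time-independent'', but part (iii) carries no such hypothesis---only (ii) does. This does not damage your argument, because $u_{tt}^N\in V_N$ holds regardless of $\alpha$ (it is the second time-derivative of a $V_N$-valued curve, and the mass matrix is invertible since $1+\alpha\geq 1-\underline\alpha>0$), so testing with $u_{tt}^N$ is legitimate in any case; the paper's proof of (iii) does not assume $\alpha_t=0$ either.
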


\begin{remark}
Some simplified versions of Proposition \ref{prop:W2lin} appear in the literature; usually,
$\alpha$ is taken to be zero (i.e. constant coefficient for $u_{tt}$), $\delta= 1$.
This version appears in \cite{Biazutti, Clements} where the damping appears under a regular ($p=2$) Laplacian,
and the authors allow a more general divergence operator, instead of $\Delta u$,
to prove global existence of weak solutions.

Nonlinear $p$-Laplace damping was in considered in \cite{RW}, where a nonlinear source term appears too.
However, other features of the equation (such as variable  $\alpha$) were not present.
For the sake of self-completeness we include below the proof of Proposition \ref{prop:W2lin}.
\end{remark}

\begin{proof}
The weak form of \eqref{W2lin} reads as
\begin{equation}
\label{W2linweak}
\begin{aligned}
\int_\Omega \Bigl\{(1+\alpha)u_{tt} w + c^2 \nabla u \nabla w
+ b\Bigl((1-\delta) +\delta |\nabla u_t|^{q-1}
\Bigr)\nabla u_t\nabla w \Bigr\} \, dx&\\
=\int_\Omega (g -fu_t) w\, dx, \quad \forall w\in W_0^{1,q+1}(\Omega),&
\end{aligned}
\end{equation}
with initial conditions $(u_{0},u_{1})$.\\

{\it Step 1: Smooth approximation of $\alpha$, $f$, and $g$}:
We consider sequences \\
$(\alpha_k)_{k\in\N}$, $(f_k)_{k\in\N}$ and $(g_k)_{k\in\N}$ such that
\begin{itemize}
\item
$(\alpha_k)_{k\in\N}\subseteq C^\infty([0,T]\times\overline{\Omega})\cap W^{1,\infty}(0,T;L_2(\Omega))$,\\
$\alpha_k\to \alpha$ in $C(0,T;L_\infty(\Omega))\cap W^{1,\infty}(0,T;L_2(\Omega))$,
$-1<-\underline{\alpha}\leq \alpha_k(t,x)\leq \overline{\alpha}$,
\item
$(f_k)_{k\in\N}\subseteq C^\infty((0,T)\times\Omega)$,
$f_k\to f$ in $L_\infty(0,T;L_{2}(\Omega))$,
\item
$(g_k)_{k\in\N}\subseteq C^\infty((0,T)\times\Omega)$,
$g_k\to g$ in
$(L_{q+1}(0,T;W_0^{1,q+1}(\Omega)))^*$,
\item
$\|f_k-\frac12\alpha_{k,t}\|_{L_\infty(0,T;L_{2}(\Omega))}\leq\bar{b}$,
\end{itemize}
and, for fixed $k\in\N$, prove that there exists a solution $u^{(k)}$ of \eqref{W2linweak} with $\alpha$, $f$ and $g$ replaced by $\alpha_k$, $f_k$ and $g_k$, respectively, i.e.,
\begin{equation}
\label{W2linweak_k}
\begin{aligned}
\int_\Omega \Bigl\{(1+\alpha_k)u^{(k)}_{tt} w + c^2 \nabla u^{(k)} \nabla w
+ b\Bigl((1-\delta) +\delta |\nabla u^{(k)}_t|^{q-1}
\Bigr)\nabla u^{(k)}_t\nabla w \Bigr\} \, dx&\\
=\int_\Omega (g_k -f_ku^{(k)}_t) w\, dx\quad \forall w\in W_0^{1,q+1}(\Omega),&
\end{aligned}
\end{equation}
with initial conditions $(u_{0},u_{1})$.
Later we will consider limits as $k\to\infty$ to prove well-posedness of \eqref{W2linweak}.
The existence proof follows the line of the standard approach for linear parabolic or second order hyperbolic PDEs as it can be found, e.g., in \cite{Evans}. The proof is divided into three subparts: (a) Galerkin approximation, (b) energy estimates and (c) weak limit.
\\[1ex]
{\it Step 1 (a): Galerkin approximation.} We will first show existence and uniqueness of solutions for a finite-dimensional approximation of \eqref{W2linweak_k}.\\
Assume $w_m=w_m(x)$, $m\in\N$ are smooth functions such that
\begin{align*}
&\{w_m\}_{m\in\N}\text{ is an orthonormal basis of }L_2^{\tilde{\alpha}_k}(\Omega),\\
&\{w_m\}_{m\in\N}\text{ is a basis of } W_0^{1,q+1}(\Omega),
\end{align*}
where $L_2^{\tilde{\alpha}_k}$ is the weighted $L_2$-space based on the inner product
\begin{equation*}
\langle f,g\rangle_{L_2^{\tilde{\alpha}_k}(\Omega)} := \int_\Omega (1+\tilde{\alpha}_k) fg \, dx\,,
\end{equation*}
with
\[
\tilde{\alpha}_k=\frac{1}{T}\int_0^T\alpha(t)\, dt\,.
\]
Moreover, let $V_n$ be the finite dimensional subspace of $L_2^{\alpha_k} (\Omega) \cap W_0^{1,q+1}(\Omega)$ spanned by $\{w_m\}_{m=1}^n$.
Thus $(V_n)_{n\in\N}$ is a nested sequence ($V_k \subseteq V_l$ for $k \leq l$) of finite dimensional subspaces
$V_n\subseteq L_2^{\alpha_k} (\Omega) \cap W_0^{1,q+1}(\Omega)$ such that $\bigcup_{n\in\N} V_n=W_0^{1,q+1}(\Omega)$.\\
Furthermore, suppose $(u_{0,n})_{n\in\N}$, $(u_{1,n})_{n\in\N}$ are sequences satisfying
\begin{itemize}
\item
$u_{0,n}\in  V_n$,
$u_{0,n}\to u_0$ in $H_0^1(\Omega)$,
\item
$u_{1,n}\in  V_n$,
$u_{1,n}\to u_1$ in $L_2(\Omega)$.
\end{itemize}
Based on this, we consider a sequence of discretized versions of \eqref{W2linweak_k},
\begin{equation}
\label{W2linweakdis}
\begin{aligned}
&\int_\Omega \Bigl\{(1+\alpha_k)u_{n,tt}^{(k)} w_n + c^2 \nabla u_n^{(k)} \nabla w_n \\
&\qquad + b\Bigl((1-\delta) +\delta|\nabla u_{n,t}^{(k)}|^{q-1}\Bigr)\nabla u_{n,t}^{(k)} \nabla w_n \Bigr\} \, dx\\
&=\int_\Omega (g_k -f_k u_{n,t}^{(k)})w_n \, dx\quad \forall w_n\in V_n,
\end{aligned}
\end{equation}
with $u_n^{(k)}(t)\in V_n$ and initial conditions $(u_{0,n},u_{1,n})$.\\
For each $n\in\N$ the equality in (\ref{W2linweakdis}) together with initial conditions $(u_{0,n},u_{1,n})$ gives an initial value problem for a second order
system of ordinary differential equations which has smooth (with respect to time) coefficients and right-hand side.\\
The standard existence theory for ordinary differential equations (cf. \cite{Teschl}) provides us with a unique solution
$u_n^{(k)}\in C^\infty(0,\tilde{T};V_n)$ of the finite-dimensional approximation \eqref{W2linweakdis} of \eqref{W2linweak} for some $\tilde{T}\leq T$ sufficiently small.
By the uniform energy and norm estimates below,
we obtain $\tilde{T}=T$, i.e. there is a unique solution $u_n^{(k)}\in C^1(0,T;V_n)$ of \eqref{W2linweakdis}.

{\it Step 1 (b): Energy estimate.} Testing \eqref{W2linweakdis} with $w_n=u_{n,t}^{(k)}(t)$, integrating with respect to time and using the identity
$$\frac{d}{dt}\alpha_k \left(u_{n,t}^{(k)}\right)^2 = 2\alpha_k u_{n,t}^{(k)} u_{n,tt}^{(k)}+\alpha_{k,t} \left(u_{n,t}^{(k)}\right)^2$$
as well as Young's inequality in the form \eqref{abeps} with \eqref{Cepsr},
we obtain
\begin{align*}
&\frac12\left[\int_\Omega (1+\alpha_k)\left(u_{n,t}^{(k)}\right)^2\, dx
+ c^2 |\nabla u_n^{(k)}|_{L_2(\Omega)}^2 \right]_0^t\\
&\quad+ b \int_0^t \int_\Omega \Bigl((1-\delta) +\delta|\nabla u_{n,t}^{(k)}|^{q-1}\Bigr)|\nabla u_{n,t}^{(k)}|^2 \, dx \, ds\\
&=
- \int_0^t  \int_\Omega (f_k-\frac12\alpha_{k,t}) \left(u_{n,t}^{(k)}\right)^2 \, dx \, ds
+ \int_0^t  \int_\Omega g_k u_{n,t}^{(k)} \, dx \, ds\\
&\leq
\bar{b}\int_0^t | u_{n,t}^{(k)}|_{L_4(\Omega)}^2 \, ds
+\int_0^t \Bigl(\tfrac{b\delta}{2} |u_{n,t}^{(k)}|_{W_0^{1,q+1}(\Omega)}^{q+1} \, dx
+C(\tfrac{b\delta}{2},q+1) |g_k|_{(W_0^{1,q+1}(\Omega))^*}^{\frac{q+1}{q}}\Bigr)\, ds\,,
\end{align*}
hence
\begin{equation}\label{enest_kn}
\begin{aligned}
 &\frac12\left[\int_\Omega (1+\alpha_k)\left(u_{n,t}^{(k)}\right)^2\, dx
+ c^2 |\nabla u_{n}^{(k)}|_{L_2(\Omega)}^2 \right]_0^t\\
&+ \int_0^t \Bigl( \hat{b}|\nabla u_{n,t}^{(k)}|_{L_2(\Omega)}^2
+b\delta|\nabla u_{n,t}^{(k)}|_{L_{q+1}(\Omega)}^{q+1}\Bigr)\, ds\\
& \leq
C(\tfrac{b\delta}{2},q+1) \|g_k\|_{(L_{q+1}(0,T;W_0^{1,q+1}(\Omega)))^*}^{\frac{q+1}{q}}\, ,
\end{aligned}
\end{equation}
which corresponds to the energy estimate \eqref{enest} upon replacement of $\alpha,f,g$ by $\alpha_k,f_k,g_k$.
As, by assumption,
$g_k \in (L_{q+1}(0,T;W_0^{1,q+1}(\Omega)))^*$,
we have that $\big(u_n^{(k)}\big)_{n\in\N}$ is a bounded sequence in the Banach space
\begin{equation}\label{space:un}
\begin{aligned}
\tilde{X}:=~&C^1(0,T;L_2(\Omega)) \cap C(0,T;H_0^1(\Omega)) \cap W^{1,q+1}(0,T; W_0^{1,q+1}(\Omega)).
\end{aligned}
\end{equation}
Hence
\begin{equation}\label{Xhat}
\begin{aligned}
\big(u_n^{(k)}\big)_{n\in\N}\text{ is bounded in }W^{1,q+1}(0,T; W_0^{1,q+1}(\Omega)),
\end{aligned}
\end{equation}
which is a reflexive Banach space.
Furthermore, we obtain that
\begin{equation}\label{Lqdual}
|\nabla u_{n,t}^{(k)}|^{q-1} \nabla u_{n,t}^{(k)} \text{ is uniformly bounded in } L_{\frac{q+1}{q}}(0,T;L_{\frac{q+1}{q}}(\Omega)).
\end{equation}
{\it Step 1 (b) --- part (ii): Estimate of $u^{(k)}_{n,tt}$}.
To show (ii) in case $\alpha_{k,t}=0$ and therewith $\tilde{\alpha}_k=\alpha_k$, we now prove an analog of \eqref{utt} with $\alpha_k,f_k,g_k$ in place of $\alpha,f,g$. For this purpose, let $v\in W_0^{1,q+1}(\Omega)$. We decompose $v$,
\begin{equation*}
v= v_n+z_n \text{ with } v_n \in V_n \text{ and } z_n \in V_n^{\bot_{L_2^{\alpha_k}}}.
\end{equation*}
Here
\begin{equation*}
|v_n|_{W_0^{1,q+1}(\Omega)}\leq |v|_{W_0^{1,q+1}(\Omega)}
\end{equation*}
holds due to the fact that with a linearly independent set of dual vectors $\{w_m^*\}_{m\in\N} \subseteq (W_0^{1,q+1}(\Omega))^*$ of $V_n^*$, i.e., such that $\langle w_i^*,w_j\rangle_{W_0^{1,q+1}(\Omega)^*,W_0^{1,q+1}(\Omega)}=\delta_{i,j}$ (which exists by the Hahn-Banach Theorem) we have
\begin{align*}
|v_n|_{W_0^{1,q+1}(\Omega)}
=&\sup_{v^*\in V_n^*\,, \ |v^*|_{W_0^{1,q+1}(\Omega)^*}=1}
\langle v^*,v_n\rangle_{W_0^{1,q+1}(\Omega)^*,W_0^{1,q+1}(\Omega)}\\
=&\sup_{v^*\in \text{span}(w_1^*,\ldots,w_n^*)\,, \ |v^*|_{W_0^{1,q+1}(\Omega)^*}=1}
\langle v^*,v_n\rangle_{W_0^{1,q+1}(\Omega)^*,W_0^{1,q+1}(\Omega)}\\
=&\sup_{v^*\in \text{span}(w_1^*,\ldots,w_n^*)\,, \ |v^*|_{W_0^{1,q+1}(\Omega)^*}=1}
\langle v^*,v\rangle_{W_0^{1,q+1}(\Omega)^*,W_0^{1,q+1}(\Omega)}\\
\leq&\sup_{v^*\in W_0^{1,q+1}(\Omega)^*\,, \ |v^*|_{W_0^{1,q+1}(\Omega)^*}=1}
\langle v^*,v\rangle_{W_0^{1,q+1}(\Omega)^*,W_0^{1,q+1}(\Omega)}\\
=& |v|_{W_0^{1,q+1}(\Omega)}\,.
\end{align*}
 By using orthogonality we obtain the first equality below; by \eqref{W2linweakdis} we obtain the second equality); finally, by using $|.|_{L^2(\Omega)} \leq {C_{L_{q+1},L_2}^\Omega} |.|_{L^{q+1}(\Omega)}$ we have
\begin{align*}
&\int_\Omega (1+\alpha_k)u_{n,tt}^{(k)} v \, dx = \int_\Omega (1+\alpha_k)u_{n,tt}^{(k)} v_n \, dx \\
&=-c^2 \int_\Omega \nabla u_n^{(k)} \nabla v_n \, dx - b(1-\delta) \int_\Omega \nabla u_{n,t}^{(k)} \nabla v_n \, dx\\
&\quad- b\delta \int_\Omega |\nabla u_{n,t}^{(k)}|^{q-1} \nabla u_{n,t}^{(k)} \nabla v_n \, dx + \int_\Omega (g_k - f_k u_{n,t}^{(k)}) v_n \, dx\\
&\leq  \left\{c^2 |\nabla u_n^{(k)}|_{L_2(\Omega)}  + b(1-\delta) |\nabla u_{n,t}^{(k)}| \right\} {C_{L_{q+1},L_2}^\Omega} | \nabla v_n |_{L_{q+1}(\Omega)} \\
&\quad + \left\{b\delta |\nabla u_{n,t}^{(k)} |_{L_{q+1}(\Omega)}^q  + |g_k - f_k u_{n,t}^{(k)}|_{(W_0^{1,q+1}(\Omega))^*}\right\} |\nabla v_n|_{L_{q+1}(\Omega)}\\
&= \left\{c^2 {C_{L_{q+1},L_2}^\Omega} |u_n^{(k)}|_{H^1(\Omega)} + b(1-\delta) {C_{L_{q+1},L_2}^\Omega} |u_{n,t}^{(k)} |_{H^1(\Omega)} \right. \\
&\quad + \left. b\delta |u_{n,t}^{(k)} |_{W_0^{1,q+1}(\Omega)}^q + |g_k - f_k u_{n,t}^{(k)}|_{(W_0^{1,q+1}(\Omega))^*}\right\} \underbrace{|v_n|_{W_0^{1,q+1}(\Omega)}}_{\leq  |v|_{W_0^{1,q+1}(\Omega)}}.
\end{align*}

Multiplying the resulting inequality with a test function $\phi \in C^{\infty}(0,T)$ and integrating with respect to time yields
\begin{align*}
&\int_0^t \int_\Omega (1+\alpha_k)u_{n,tt}^{(k)} v \, dx\, \phi \, ds\\
& \leq  \int_0^t \left\{c^2 {C_{L_{q+1},L_2}^\Omega} |u_n^{(k)}|_{H^1(\Omega)} + b(1-\delta) {C_{L_{q+1},L_2}^\Omega} |u_{n,t}^{(k)} |_{H^1(\Omega)} \right. \\
&\quad + \left. b\delta |u_{n,t}^{(k)} |_{W_0^{1,q+1}(\Omega)}^q + |g_k - f_k u_{n,t}^{(k)}|_{(W_0^{1,q+1}(\Omega))^*}\right\}
|v|_{W_0^{1,q+1}(\Omega)} |\phi| \, ds
\end{align*}
\begin{align*}
&\leq \|v \phi\|_{L_{q+1}(0,T;W_0^{1,q+1}(\Omega))} \left\{ c^2 {C_{L_{q+1},L_2}^\Omega} \|u_n^{(k)}\|_{L_{\frac{q+1}{q}}(0,T;H^1(\Omega))}  \right.\\
&\quad+b(1-\delta) {C_{L_{q+1},L_2}^\Omega} \|u_n^{(k)}\|_{L_{\frac{q+1}{q}}(0,T;H^1(\Omega))} \\
&\quad \left. +b\delta \|u_n^{(k)} \|_{L_{\frac{q+1}{q}}(0,T;W_0^{1,q+1}(\Omega))}+\|g_k-f_k u_{n,t}^{(k)}\|_{(L_{q+1}(0,T;W_0^{1,q+1}(\Omega)))^*)} \right\},
\end{align*}
which shows that
\begin{equation}
\label{unttk}
\begin{aligned}
&\|(1+\alpha_k)u_{n,tt}^{(k)}\|_{(L_{q+1}(0,T; W_0^{1,q+1}(\Omega)))^*}\\
&\leq\bar{C} \left\{ \|u_n^{(k)}\|_{\tilde{X}}
+ \|g_k\|_{(L_{q+1}(0,T; (W_0^{1,q+1}(\Omega)))^*}
+ \|u_n^{(k)}\|_{\tilde{X}} \|f_k\|_{L_\infty(0,T;L_2(\Omega))} \right\}
\end{aligned}
\end{equation}
for some constant $\bar{C}>0$. Hence
$$(1+\alpha_k)u_{n,tt}^{(k)} \in (L_{q+1}(0,T; (W_0^{1,q+1}(\Omega)))^*$$
with a uniform bound with respect to $n$.
\\[1ex]
{\it Step 1 (c): Weak limit.}
As a consequence of \eqref{Xhat}, \eqref{Lqdual},
there exists a weakly convergent subsequence, which for simplicity we denote by $u_n^{(k)}$ again, and a sequence $u^{(k)} \in W^{1,q+1}(0,T; W_0^{1,q+1}(\Omega))$ such that
\begin{align}\label{weakconv1}
&u_n^{(k)}\rightharpoonup u^{(k)} \mbox{ in }W^{1,q+1}(0,T; W_0^{1,q+1}(\Omega)), \\
& |\nabla u_{n,t}^{(k)}|^{q-1} \nabla u_{n,t}^{(k)}
\rightharpoonup |\nabla u_t^{(k)}|^{q-1} \nabla u_t^{(k)}
\mbox{ in }L_{\frac{q+1}{q}} (0,T;L_{\frac{q+1}{q}}(\Omega)).\label{weakconv3}
\end{align}
This weak limit satisfies the estimates \eqref{enest_kn} and \eqref{unttk} with $u_n^{(k)}$ replaced by $u^{(k)}$.\\
For fixed $k,m\in\N$ and $\phi_m\in C^\infty(0,T,V_m)\subset L_{q+1}(0,T;W_0^{1,q+1}(\Omega))$ with $\phi_m(T)=0$ we have for any $n\geq m$ by $V_m\subseteq V_n$ that
\begin{equation}
\label{convntoinfty_W2}
\begin{aligned}
&\int_0^T \int_\Omega \Bigl\{ (1+\alpha_k)u_{tt}^{(k)} \phi_m
+ c^2 \nabla u^{(k)} \nabla \phi_m\\
&\quad+ b\Bigl((1-\delta) +\delta|\nabla u_t^{(k)}|^{q-1}\Bigr)\nabla u_t^{(k)})\nabla \phi_m
 + f_k u_t^{(k)} \phi_m - g_k \phi_m\Bigr\} \, dx\, ds\\
&=-\int_0^T\int_\Omega [u_{t}^{(k)}-u_{n,t}^{(k)}] \Bigl((1+\alpha_k)  \phi_m\Bigr)_t \, dx \, ds\\
&\quad-\int_\Omega [u_1-u_{1,n}] (1+\alpha_k(0))  \phi_m(0) \, dx \, ds + \int_0^T\int_\Omega [u_t^{(k)} -u_{n,t}^{(k)}]f_k \phi_m \, dx\, ds\\
&\quad+  \int_0^T\int_\Omega \left(c^2[\nabla u^{(k)}-\nabla u_n^{(k)}] + b (1-\delta) [\nabla u_t^{(k)}-\nabla u_{n,t}^{(k)}]\right)\nabla \phi_m \, dx\, ds \\
&\quad + b \int_0^T\int_\Omega \delta[|\nabla u_t^{(k)}|^{q-1}\nabla u_t^{(k)}-|\nabla u_{n,t}^{(k)}|^{q-1}\nabla u_{n,t}^{(k)}]\nabla \phi_m\Bigr)
\Bigr\} \, dx\, ds \to 0  \\
\end{aligned}
\end{equation}
$\mbox{ as }n\to \infty$, due to \eqref{weakconv1}
and \eqref{weakconv3}.
Since $\bigcup_{m\in\N} V_m=W_0^{1,q+1}(\Omega)$, the relation  \eqref{convntoinfty_W2} proves that $u^{(k)}$ solves \eqref{W2linweak_k}. Moreover $u^{(k)}$ satisfies the energy estimates \eqref{enest} with $\alpha$, $f$, $g$ and $u$ replaced by $\alpha_k$, $f_k$, $g_k$ and $u^{(k)}$, respectively. In case (ii) $\alpha_{k,t}=0$, so additionally, we have that  \eqref{utt}, with $\alpha$, $f$, $g$ and $u$ replaced by $\alpha_k$, $f_k$, $g_k$ and $u^{(k)}$, respectively, holds.
\\[1ex]

{\it Step 2: $k\to \infty$.} For all $w\in C^\infty(0,T;W_0^{1,q+1}(\Omega))$ with $w(T)=0$ we have
\begin{align*}\nonumber
&\int_0^t \int_\Omega \Bigl\{ (1+\alpha)u_{tt} w + c^2 \nabla u \nabla w\\
&\quad + b\Bigl((1-\delta) +\delta|\nabla u_t|^{q-1}\Bigr)\nabla u_t\nabla w +f u_t w  - gw  \Bigr\} \, dx\, ds
\nonumber\\
&=-\int_0^t \int_\Omega [u_{t}-u_{t}^{(k)}] \Bigl((1+\alpha)w\Bigr)_t \, dx \, ds
- \int_0^t \int_\Omega u_{t}^{(k)} \Bigl([\alpha-\alpha_k]w\Bigr)_t \, dx \, ds\\
&\quad- \int_\Omega u_{1} \Bigl([\alpha(0)-\alpha_k(0)]w(0)\Bigr) \, dx \nonumber\\
&\quad+ \int_0^t \int_\Omega c^2 [\nabla u - \nabla u^{(k)})] \nabla w \, dx \, ds + \int_0^t \int_\Omega  b(1-\delta) [\nabla u_t - \nabla u_t^{(k)}] \nabla w \, dx \, ds \nonumber\\
&\quad + \int_0^t \int_\Omega b\delta [|\nabla u_t|^{q-1}\nabla u_t - |\nabla u_t^{(k)}|^{q-1}\nabla u_t^{(k)}]\nabla w \, dx \, ds \nonumber\\
&\quad + \int_0^t \int_\Omega [u_t - u_t^{(k)}] f w \, dx \, ds + \int_0^t \int_\Omega [f-f_k] u_t^{(k)}  w \, dx\, ds\nonumber\\
&\quad - \int_0^t \int_\Omega [g - g_k] w \, dx \, ds  \to 0 \text{ as } k \to \infty, \nonumber
\end{align*}
since we imposed $\alpha_k \to \alpha$ in $C(0,T;L_\infty(\Omega))\cap W^{1,\infty}(0,T;L_2(\Omega))$, $f_k \to f$ in $L_\infty(0,T;L_2(\Omega))$, $g_k \to g$ in
$(L_{q+1}(0,T;W_0^{1,q+1}(\Omega)))^*$.
Due to the fact that the estimate \eqref{enest} remains valid for $u^{(k)}$, analogously to Step  1(c), we find a convergent subsequence (which
we relabel) and a function
$u\in W^{1,q+1}(0,T; W_0^{1,q+1}(\Omega))$
such that
\begin{align}\label{weakconv4}
&u^{(k)}\rightharpoonup u \mbox{ in }
W^{1,q+1}(0,T; W_0^{1,q+1}(\Omega)),
\\
& |\nabla u_{t}^{(k)}|^{q-1} \nabla u_{t}^{(k)}
\rightharpoonup |\nabla u_t|^{q-1} \nabla u_t
\mbox{ in }L_{\frac{q+1}{q}} (0,T;L_{\frac{q+1}{q}}(\Omega)).\label{weakconv6}
\end{align}
This weak limit is, by construction, a weak solution of \eqref{W2lin} and it satisfies the energy estimate
\eqref{enest}; in case (ii) $\alpha_t=0$, it also satisfies \eqref{utt}.

{\it Step 3: Uniqueness of weak solutions.} Uniqueness of weak solutions follows from the fact that the difference $\hat{u}=u^1-u^2$ between any
two weak solutions $u^1,u^2$ of \eqref{W2lin} satisfies
\begin{equation}\label{W2_uniq}
\begin{cases}
(1+\alpha)\hat{u}_{tt}-c^2\Delta \hat{u}-b(1-\delta)\Delta \hat{u}_t
-b\delta \int_0^1\,\text{div}\Bigl( w^\sigma
\Bigl[|\nabla (u^2+\sigma \hat{u})_t|^2  \nabla \hat{u}_t\\
\quad+(\qq-1)(\nabla (u^2+\sigma \hat{u})_t \nabla \hat{u}_t)
\nabla(u^2+\sigma \hat{u})_t \Bigr]\,  \Bigr)d\sigma
+f\hat{u}_t\ = \ 0
\\
(\hat{u},\hat{u}_t)|_{t=0}=(0,0)
\\
\hat{u}|_{\partial \Omega} =0
,\\
\end{cases}
\end{equation}
in a weak sense, with
\begin{equation}\label{wsig}
w^\sigma(x,t)= |\nabla (u^2+\sigma \hat{u})_t(x,t)|^{\qq-3}\,.
\end{equation}
Above we used the fact that for
\begin{equation*}
F(\lambda)=|\lambda|^{\qq-1}\lambda, \quad F'(\lambda)=|\lambda|^{\qq-1}I + (\qq-1)|\lambda|^{\qq-3}\lambda\lambda^T
\end{equation*}
we have
\begin{equation*}
F(\nabla u^1_t(x,t))-F(\nabla u^2_t(x,t))=
\int_0^1 F'(\nabla (u^2+\sigma\hat{u})_t)(x,t))\, d\sigma\nabla\hat{u}_t(x,t).
\end{equation*}
Multiplication of \eqref{W2_uniq} by $\hat{u}_t$ yields
\[
\begin{aligned}
&\frac12\left[\int_\Omega (1+\alpha)(\hat{u}_t)^2\, dx
+ c^2 |\nabla \hat{u}|_{L_2(\Omega)}^2 \right]_0^t
+ \int_0^t  b(1-\delta) |\nabla \hat{u}_t(s)|^2 \, ds\\
&\hfill+ \int_0^t  \int_\Omega (f-\frac12\alpha_t) (\hat{u}_t)^2 \, dx \, ds
\ \leq \ 0
\end{aligned}
\]
since
\begin{equation}\label{wsig_nonneg}
\begin{aligned}
&b\delta\int_0^1\int_0^t\int_\Omega w^\sigma\Bigl(
|\nabla (u^2+\sigma \hat{u})_t|^2  |\nabla \hat{u}_t|^2\\
&\qquad+(\qq-1)(\nabla (u^2+\sigma \hat{u})_t \cdot\nabla \hat{u}_t)^2
\Bigr)  \, dx\, ds\, d\sigma \geq 0.
\end{aligned}
\end{equation}
Therefore $\hat{u}=0$ almost everywhere and the proof of uniqueness is complete.
\\[1ex]
{\it Step 4: Higher energy estimate.} For the proof of (iii), we need a higher energy estimate  which can be obtained by testing
\eqref{W2linweak} with $w=u_{tt}(t)$ (strictly speaking, we multiply by a smooth approximation of $u_{tt}$ and take weak limits); then integration with respect to time yields
\begin{equation}
\label{enest01A}\begin{aligned}
&\int_0^t\int_\Omega (1+\alpha)(u_{tt})^2\, dx\, ds
+\left[\frac{b(1-\delta)}{2}|\nabla u_{t}|_{L_2(\Omega)}^2
+\frac{b\delta}{q+1}|\nabla u_{t}|_{L_{q+1}(\Omega)}^{q+1}\right]_0^t\\
&=
c^2\int_0^t |\nabla u_{t}|_{L_2(\Omega)}^2\, ds
-c^2\left[\int_\Omega \nabla u \nabla  u_{t}\right]_0^t
-\int_0^t\int_\Omega (f u_t-g)u_{tt}\, dx\, ds\\
& \leq
c^2\int_0^t |\nabla u_{t}|_{L_2(\Omega)}^2\, ds
+\breve{b}\Bigl(|\nabla u_{t}(t)|_{L_2}^2 + |\nabla u_{t}(0)|_{L_2}^2\Bigr)\\
&\quad + \frac{c^4}{4\breve{b}}
\Bigl(|\nabla u(t)|_{L_2}^2 + |\nabla u(0)|_{L_2}^2\Bigr)
+ (\tau+\sigma) \int_0^t|u_{tt}|_{L_2}^2\, ds\\
& \quad+ \frac{\left(C_{H_0^1,L_4}^\Omega\right)^2}{4\tau}
\|f\|_{L_\infty(0,T;L_4(\Omega))}^2
\int_0^t |\nabla u_{t}|_{L_2}^2\, ds
+\frac{1}{4\sigma} \|g\|_{L_2(0,T;L_2(\Omega))},
\end{aligned}
\end{equation}
where we have used integration by parts with respect to time for the $c^2$-term.
Choosing $\breve{b}<\frac{b(1-\delta)}{2}$, then adding \eqref{enest} to  \eqref{enest01A} multiplied by $\lambda$
for any $\tau,\sigma>0$ such that
$$\tau+\sigma<1-\underline{\alpha},~\lambda\in \left(0,\min\left\{\tfrac{\hat{b}}{s}\,,\, 2\tfrac{\breve{b}}{c^2}\right\}\right)
\text{ with } s=2c^2+\tfrac{1}{4\tau}\bar{\bar{b}}(C_{H_0^1,L_4}^\Omega)^2$$
yield the energy estimate \eqref{enest1}.
\end{proof}

Note that by the assumptions $\alpha(t,x)\geq-\underline{\alpha}>-1$, $c^2>0$, \eqref{enest} implies an estimate of the form
\begin{eqnarray}
E_0[u](t)
+c_0\int_0^t \Bigl(
|\nabla u_t|_{L_2(\Omega)}^2
+|\nabla u_t|_{L_{q+1}(\Omega)}^{q+1}
\Bigr)\, ds
&&\nonumber\\
 \leq  C_0 (E_0[u](0)+\|g\|_{(L_{q+1}(0,T;L_{q+1}(\Omega)))^*}) &&
\label{enest0}
\end{eqnarray}
for the usual lower order energy
\begin{equation}\label{defE0}
E_0[u](t)=\left[|u_t|_{L_2(\Omega)}^2 +|\nabla u|_{L_2(\Omega)}^2 \right](t).
\end{equation}

Using Proposition \ref{prop:W2lin} in a fixed point argument, we now show local well-posedness.
\begin{theorem}\label{th:W2locex}
Let $c^2,b,\delta,1-\delta>0$, $k\in\R$, $\qq>d-1$.\\
For any $T>0$ there is a $\kappa_T>0$ such that for all $u_0,u_1\in W_0^{1,\qq+1}(\Omega)$ with
\[\begin{aligned}
&E_1[u](0)+|\nabla u_0|_{L_{\qq+1}}^2 \\
&= |u_1|_{L_2(\Omega)}^2 +|\nabla u_0|_{L_2(\Omega)}^2
+ |\nabla u_1|_{L_2(\Omega)}^2
+ |\nabla u_1|_{L_{q+1}(\Omega)}^{q+1}
+|\nabla u_0|_{L_{\qq+1}}^2\leq \kappa_T^2
\end{aligned}\]
there exists a weak solution $u\in \cW $ of \eqref{Wpress_viscosity} where
\begin{eqnarray}\label{defcW}
\cW =\{v\in X
&:& \|v_{tt}\|_{L_2(0,T;L_2(\Omega))}\leq \bar{m}\nonumber\\
&& \wedge \,  \|\nabla v_t\|_{L_\infty(0,T;L_2(\Omega))}\leq \bar{m}\nonumber\\
&& \wedge\,  \|\nabla v_t\|_{L_{q+1}(0,T;L_{q+1}(\Omega))}\leq \bar{M}\}
\end{eqnarray}
with
\begin{equation}\label{smallnessMbar}
2k C_{W_0^{1,\qq+1},L_\infty}^\Omega
(\kappa_T+ T^{\frac{\qq}{\qq+1}} \bar{M}) <1
\end{equation}
and $\bar{m}$ sufficiently small, and $u$ is unique in $\cW $.
\end{theorem}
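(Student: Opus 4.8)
The plan is to recast \eqref{Wpress_viscosity} as a fixed point equation and solve it with Banach's theorem, using Proposition~\ref{prop:W2lin} as the solver for the frozen-coefficient linearization. For $v\in\cW$ with $(v,v_t)|_{t=0}=(u_0,u_1)$, put $\alpha:=-2kv$, $f:=-2kv_t$, $g:=0$ in \eqref{W2lin} (one could as well move the quadratic term into $g$) and let $u=\mathcal{S}(v)\in X$ be the unique solution furnished by Proposition~\ref{prop:W2lin}(i),(iii). With these data \eqref{W2lin} is exactly \eqref{Wpress_viscosity} precisely when $v=u$, so a fixed point of $\mathcal{S}$ in $\cW$ is a weak solution of \eqref{Wpress_viscosity} in $\cW$, and uniqueness of the fixed point gives uniqueness in $\cW$. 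Three things must be checked: $\mathcal{S}$ is well defined on $\cW$, it maps $\cW$ into itself, and it is a contraction for a weaker metric making $\cW$ complete.

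For well-definedness, the one nonautomatic hypothesis of Proposition~\ref{prop:W2lin} is the nondegeneracy $1+\alpha=1-2kv\geq 1-\underline{\alpha}>0$, and this is where \eqref{estLinftyW2} and \eqref{smallnessMbar} are used. Since $\qq>d-1$ gives $W_0^{1,\qq+1}(\Omega)\hookrightarrow L_\infty(\Omega)$, the chain \eqref{estLinftyW2} applied to $v$ yields $\|v\|_{L_\infty(0,T;L_\infty(\Omega))}\leq C_{W_0^{1,\qq+1},L_\infty}^\Omega\bigl(|\nabla u_0|_{L_{\qq+1}}+T^{\frac{\qq}{\qq+1}}\|\nabla v_t\|_{L_{\qq+1}(0,T;L_{\qq+1})}\bigr)\leq C_{W_0^{1,\qq+1},L_\infty}^\Omega\bigl(\kappa_T+T^{\frac{\qq}{\qq+1}}\bar M\bigr)$, so \eqref{smallnessMbar} forces $\|2kv\|_{L_\infty}<1$ uniformly over $\cW$, fixing $\underline{\alpha}<1$. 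The rest is arranged by taking $\bar m$ small: $\|f-\tfrac12\alpha_t\|_{L_\infty(0,T;L_2)}=|k|\,\|v_t\|_{L_\infty(0,T;L_2)}$ and $\|f\|_{L_\infty(0,T;L_4)}=2|k|\,\|v_t\|_{L_\infty(0,T;L_4)}$ are $\lesssim\bar m$ by Poincar\'e and $H_0^1\hookrightarrow L_4$, in particular the former becomes $<b(1-\delta)/(C_{H_0^1,L_4}^\Omega)^2$; $g=0$ lies in all the required spaces; and $u_0,u_1\in W_0^{1,\qq+1}(\Omega)\subseteq H_0^1(\Omega)$.

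Feeding $u=\mathcal{S}(v)$ into \eqref{enest1}, the left side controls $\|u_{tt}\|_{L_2(0,T;L_2)}^2$, $\|\nabla u_t\|_{L_\infty(0,T;L_2)}^2$ and $\|\nabla u_t\|_{L_{\qq+1}(0,T;L_{\qq+1})}^{\qq+1}$, while the right side is $\leq C_1\bigl(E_1[u](0)+\|f\|_{L_\infty(0,T;L_4)}^2\bigr)\leq C_1(\kappa_T^2+C\bar m^2)$, the correction being quadratic in the radii of $\cW$. Choosing, in this order, $\bar m$ small, then $\bar M$ small and compatible with \eqref{smallnessMbar}, then $\kappa_T$ small, makes the right side $\leq\min\{\bar m^2,\bar M^{\qq+1}\}$, hence $\mathcal{S}(v)\in\cW$ ($\cW$ is nonempty: apply the same estimate to the solution of the purely linear problem with data $(u_0,u_1)$). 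For the contraction, set $w:=\mathcal{S}(v_1)-\mathcal{S}(v_2)$, $\chi:=v_1-v_2$, subtract the two copies of \eqref{W2lin}, and write the difference of the $\qq$-damping terms via the mean value representation of $\lambda\mapsto|\lambda|^{\qq-1}\lambda$ exactly as in Step~3 of the proof of Proposition~\ref{prop:W2lin}; then $w$ solves a linear damped equation with zero initial data and source $2k\chi\,(\mathcal{S}(v_2))_{tt}+2k\chi_t\,(\mathcal{S}(v_2))_t$. Testing with $w_t$: the damping gives $b(1-\delta)|\nabla w_t|_{L_2}^2$ plus a nonnegative term of the form \eqref{wsig_nonneg}; the defect from the nonconstant coefficient and from $f$ is $\lesssim |k|\,\|v_{1,t}\|_{L_\infty(0,T;L_2)}\,|\nabla w_t|_{L_2}^2$, absorbed by the strong damping via the same threshold $|k|\,\|v_{1,t}\|_{L_\infty(0,T;L_2)}(C_{H_0^1,L_4}^\Omega)^2<b(1-\delta)$ as in Proposition~\ref{prop:W2lin}(i); and the source is estimated with $H_0^1\hookrightarrow L_4$ and Young's inequality, every surviving term carrying a factor $\bar m$ (from the $\cW$-bounds on $(\mathcal{S}(v_2))_{tt}$ and $\nabla(\mathcal{S}(v_2))_t$) times $d(v_1,v_2)^2$, where $d$ is the metric of $C(0,T;H_0^1(\Omega))\cap C^1(0,T;L_2(\Omega))$. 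With $1-2kv_1\geq 1-\underline{\alpha}$ this gives $d(\mathcal{S}(v_1),\mathcal{S}(v_2))\leq C\sqrt{1+T}\,\bar m\,d(v_1,v_2)$, a strict contraction once $\bar m$ is small; endowing $\cW$ (or, if needed, its closure in the $d$-norm, on which $\mathcal{S}$ is still well defined because the uniform $L_\infty$-bound on $v$ survives $d$-limits by lower semicontinuity and $\qq>d-1$) with $d$ makes it a complete metric space, and Banach's fixed point theorem produces the unique $u\in\cW$.

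The main obstacle is this last step: one must keep the quasilinear damping difference monotone through the mean value trick of Step~3, simultaneously absorb the defect of the nonconstant coefficient $1-2kv$ into the strong damping using precisely the $\bar b$-threshold of Proposition~\ref{prop:W2lin}, and notice that the source in the difference equation loses exactly one derivative, which forces the contraction to be run in $C(0,T;H_0^1(\Omega))\cap C^1(0,T;L_2(\Omega))$ rather than in $X$. Reconciling this with the self-mapping, where \eqref{smallnessMbar} wants $\bar M$ small but the right side of \eqref{enest1} wants $\bar M$ no smaller than roughly $\kappa_T^{2/(\qq+1)}$, is what dictates the order $\bar m\to\bar M\to\kappa_T$ in which the smallness parameters are fixed.
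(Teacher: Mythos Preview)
Your proposal is correct and follows essentially the same route as the paper: a Banach fixed point argument with the solver of Proposition~\ref{prop:W2lin}, nondegeneracy secured through \eqref{estLinftyW2} and \eqref{smallnessMbar}, self-mapping via the higher energy estimate \eqref{enest1}, and contraction obtained by testing the difference equation with $w_t$ after rewriting the $\qq$-Laplace damping difference with the mean value formula so that it contributes a nonnegative term as in \eqref{wsig_nonneg}. The only cosmetic difference is the metric in which the contraction is run: the paper uses $|||v|||=\|v_t\|_{L_\infty(0,T;L_2)}+\|\nabla v\|_{L_\infty(0,T;L_2)}+\|\nabla v_t\|_{L_2(0,T;L_2)}$, which arises directly from the left side of the energy identity, whereas you use the metric of $C(0,T;H_0^1)\cap C^1(0,T;L_2)$; both work since the source term $2k\chi u^2_{tt}+2k\chi_t u^2_t$ can be split either way via $H_0^1\hookrightarrow L_4$.
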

\begin{proof}
We define the fixed point operator $\cT :\cW \to X$, $v\mapsto \cT v=u$ where $u$ solves \eqref{W2lin} with
\begin{equation}\label{alphafg_W}
\alpha=2k v\,, \quad f=2kv_t\,, \quad g=0\,,
\end{equation}
which is well-defined by Proposition \ref{prop:W2lin}.
Indeed, by $v\in \cW $, \eqref{smallnessMbar}, and the penultimate line in \eqref{estLinftyW2} we have that
$\alpha\in C(0,T;L_\infty(\Omega))$, $-1<-\underline{\alpha}\leq \alpha(t,x)\leq \overline{\alpha}$, with $\underline{\alpha}=\overline{\alpha}=
2k C_{W_0^{1,\qq+1},L_\infty}^\Omega (\kappa_T+ T^{\frac{\qq}{\qq+1}} \bar{M}) <1$.
Moreover,
\begin{align*}
\|f-\tfrac{1}{2}\alpha_t\|_{L_\infty(0,T;L_{2}(\Omega)}&\leq k C_{PF} \bar{m},\\
\|f\|_{L_\infty(0,T;L_4(\Omega))}^2&\leq 4k^2 (C_{H_0^1,L_4}^\Omega)^2 \bar{m}^2,
\end{align*}
with $C_{PF}$ the constant in the Poincar\'{e}-Friedrichs inequality
$$ \|w\|_{L_2(\Omega)} \leq C_{PF} \|\nabla w\|_{L_2(\Omega)}\,.$$
Hence, we can make use of the higher energy estimate \eqref{enest1} to conclude that for any $\bar{m},\bar{M}>0$ with

\[
2k C_{W_0^{1,\qq+1},L_\infty}^\Omega T^{\frac{\qq}{\qq+1}} \bar{M} <1,
\quad
k C_{PF} \bar{m}<\frac{1-\delta}{(C_{H_0^1,L_4}^\Omega)^2},
\]

we obtain that under the assumption

\[
E_1[u](0)\leq \kappa_T<
\min\Big\{\frac{1}{2k C_{W_0^{1,\qq+1},L_\infty}^\Omega}-T^{\frac{\qq}{\qq+1}} \bar{M}, \frac{\bar{m}}{\sqrt{C_1}}, \sqrt{\frac{c_1}{C_1}}\bar{m}\Big\},
\]

the operator $\cT$ maps into $\cW $.

Contractivity is obtained by considering $v^i\in\cW$, $u^i=\cT v^i\in\cW$, $i=1,2$ and subtracting the equations for $u^1$ and $u^2$, which yields
\begin{equation}\label{W2_diff}
\begin{cases}
(1-2kv^1)\hat{u}_{tt}
-b\delta\int_0^1\,\text{div}\Bigl( w^\sigma
\Bigl[|\nabla (u^2+\sigma \hat{u})_t|^2  \nabla \hat{u}_t\\
\quad+(\qq-1)(\nabla (u^2+\sigma \hat{u})_t \nabla \hat{u}_t)
\nabla(u^2+\sigma \hat{u})_t \Bigr]\,  \Bigr)d\sigma \\
\quad -b(1-\delta)\Delta \hat{u}_t -c^2\Delta \hat{u}
-2k v^1_t \hat{u}_t = \ 2k(\hat{v}u^2_{tt}+\hat{v}_tu^2_t),
\\
(\hat{u},\hat{u}_t)|_{t=0}=(0,0),
\\
\hat{u}|_{\partial \Omega} =0,
\\
\end{cases}
\end{equation}
for $\hat{u}=u^1-u^2$, $\hat{v}=v^1-v^2$, with $w^\sigma(x,t)=|\nabla(u^2+\sigma\hat{u})_t(x,t)|^{q-3}$ as in \eqref{wsig}.
Due to the special form of the nonlinear strong damping term here, we cannot apply Proposition \ref{prop:W2lin} directly,
but we can proceed analogously to its proof. By multiplication of \eqref{W2_diff} with $\hat{u}_t$, integration
with respect to space and time, and the fact that the $b\delta$ term yields a nonnegative contribution
\begin{equation}\label{wsig_nonneg1}
\begin{aligned}
&b\delta\int_0^1\int_0^t\int_\Omega w^\sigma\Bigl(
|\nabla (u^2+\sigma \hat{u})_t|^2  |\nabla \hat{u}_t|^2\\
&\qquad+(\qq-1)(\nabla (u^2+\sigma \hat{u})_t \cdot\nabla \hat{u}_t)^2
\Bigr)  \, dx\, ds\, d\sigma \geq 0.
\end{aligned}
\end{equation}
on the left hand side,
%
%
we obtain
\begin{eqnarray*}
\lefteqn{\frac12\left[\int_\Omega (1-2kv^1)(\hat{u}_t)^2\, dx
+ c^2 |\nabla \hat{u}|_{L_2(\Omega)}^2 \right]_0^t
+ \hat{b} \int_0^t |\nabla \hat{u}_t|_{L_2(\Omega)}^2
\, ds}
\nonumber\\
& \leq & 2k \int_0^t\int_\Omega \Bigl(
v^1_t (\hat{u}_t)^2 +\hat{v}u^2_{tt}\hat{u}_t+\hat{v}_tu^2_t\hat{u}_t
\Bigr)\, dx\, ds\\
& \leq & 2k (C_{H_0^1,L_4}^\Omega)^2\Bigl(
\|v^1_t\|_{L_\infty(0,T;L_2(\Omega))}
\int_0^t |\nabla \hat{u}_t|_{L_2(\Omega)}^2\, ds\\
&&\qquad+\|u^2_{tt}\|_{L_2(0,T;L_2(\Omega))}
\frac12\Bigl[\|\nabla\hat{v}\|_{L_\infty(0,T;L_2(\Omega))}^2+
\int_0^t |\nabla \hat{u}_t|_{L_2(\Omega)}^2\, ds\Bigr]\\
&&\qquad+\|u^2_t\|_{L_\infty(0,T;L_2(\Omega))}
\frac12\Bigl[\|\nabla\hat{v}_t\|_{L_2(0,T;L_2(\Omega))}^2+
\int_0^t |\nabla \hat{u}_t|_{L_2(\Omega)}^2\, ds\Bigr].
\end{eqnarray*}
This by $v^1,v^2,u^1,u^2\in\cW$ yields
\begin{eqnarray*}
\lefteqn{\min\Big\{\tfrac{1-\underline{\alpha}}{2},\tfrac{c^2}{2},
\hat{b}- k (C_{H_0^1,L_4}^\Omega)^2 (3 C_{PF}+1)\bar{m}\Big\}
|||\hat{u}|||}\\
&\leq&  k (C_{H_0^1,L_4}^\Omega)^2 (\sqrt{T}+C_{PF})\bar{m}
|||\hat{v}|||\,,
\end{eqnarray*}
where we have used  $|\nabla \hat{v}|_{L_\infty(0,T;L_2(\Omega))}^2
\leq T |\nabla \hat{v}_t|_{L_2(0,T;L_2(\Omega))}^2$ since $\nabla \hat{v}(0)=0$,
hence contractivity of $\cT$ with respect to the norm
$$|||v|||=\|v_t\|_{L_\infty(0,T;L_2(\Omega))}
+\|\nabla v\|_{L_\infty(0,T;L_2(\Omega))}
+\|\nabla v_t\|_{L_2(0,T;L_2(\Omega))}$$
provided $\bar{m}$ is chosen sufficiently small.
\end{proof}

\begin{remark}\label{rem:W2glob}

To establish a self-mapping property of the fixed point operator used in the local well-posedness proof above, a condition on smallness of the initial data multiplied with the final time is required, cf. \eqref{smallnessMbar}. This results from the appearance of the factor $t^\qq$ in the $L_\infty$ estimate \eqref{estLinftyW2}, that we have used to exclude the possibility of degeneracy. Thus, whenever \eqref{estLinftyW2} is sharp, degeneracy may occur after finite time even for small initial data. We therefore expect a global in time wellposedness result not to hold for \eqref{Wpress_viscosity}.

\end{remark}


\section{The Westervelt equation in acoustic velocity potential formulation with nonlinear strong damping \eqref{Wpot_viscosity}}
\label{secWpot_viscosity}

Like in Section \ref{secWpress_viscosity} we assume
that $\Omega\subseteq \R^d$, $d \in \{1,2,3\}$, is an open bounded set with Lipschitz boundary in order to make use of \eqref{H01L4}, \eqref{W01qp1linfty}.
\\
Again we first consider an equation in which the nonlinearity occurs only through damping, namely
\begin{equation}\label{Wpot_viscosity_lin}
\begin{cases}
\psii_{tt}-\alpha\Delta \psii
-b\,\text{div}\Bigl(((1-\delta) +\delta|\nabla \psii_t|^{\qq-1})\nabla \psii_t\Bigr)
=0
\\
(\psii,\psii_t)|_{t=0}=(\psii_0, \psii_1)
\\
\psii|_{\partial \Omega} =0
.\\
\end{cases}
\end{equation}
\begin{proposition} \label{prop:Wpot_viscosity_lin}
\begin{enumerate}
\item[(i)]
Let $T>0$, $b,\delta,1-\delta>0$ and assume
\begin{itemize}
\item $\alpha\in L_2(0,T;L_\infty(\Omega))$, $\nabla\alpha\in L_2(0,T;L_2(\Omega))$
\item $u_0\in H_0^1(\Omega)$.
\item $\qq\geq1$ if $d=1$, $\qq>d-1$ if $d\in \{2,3\}$
\end{itemize}
and
\[\begin{cases}
b-(\tfrac12+\sqrt{T})\|\alpha\|_{L_2(0,T;L_\infty(\Omega))}\\
\qquad-(\tfrac12 T+({C_{W_0^{1,\qq+1},L_\infty}^\Omega})^2)\|\nabla\alpha\|_{L_2(0,T;L_2(\Omega))}:=b_1>0 &
\mbox{ if }\qq=1,\\
b(1-\delta)-(\tfrac12+\sqrt{T})\|\alpha\|_{L_2(0,T;L_\infty(\Omega))}\\ \hfill
-\tfrac12 T\|\nabla\alpha\|_{L_2(0,T;L_2(\Omega))}:=b_1>0 & \mbox{ if }\qq>1.
\end{cases}\]
Then \eqref{Wpot_viscosity_lin} has a weak solution
\begin{align*}
u\in C^1([0,T];L_2(\Omega))\cap W^{1,\qq+1}(0,T;W_0^{1,\qq+1}(\Omega))
\end{align*}
and any solution satisifies the energy estimate
\begin{equation}\label{enest_Wpot_viscosity_lin0}
\begin{aligned}
&\tfrac12\left[|\psii_{t}|_{L_2(\Omega)}^2\right]_0^t
+\min\{b_1,\tfrac{b\delta}{2}\}\int_0^t \Bigl(|\nabla\psii_{t}|_{L_2(\Omega)}^2
+|\nabla\psii_{t}|_{L_{\qq+1}(\Omega)}^{\qq+1}\Bigr)\, ds
\\
&\leq
\begin{cases}
\Bigl(\|\alpha\|_{L_2(0,T;L_\infty(\Omega))} +\|\nabla\alpha\|_{L_2(0,T;L_2(\Omega))}\Bigr)
\tfrac12 |\nabla u_0|_{L_2(\Omega)} &\mbox{ if }\qq=1,\\
\Bigl(\|\alpha\|_{L_2(0,T;L_\infty(\Omega))} +\|\nabla\alpha\|_{L_2(0,T;L_2(\Omega))}\Bigr)
\tfrac12 |\nabla u_0|_{L_2(\Omega)} \\
\hfill+C(\tfrac{b\delta}{2},\tfrac{\qq+1}{2})\|\nabla\alpha\|_{L_2(0,T;L_2(\Omega))}^{\frac{\qq+1}{\qq-1}}
&\mbox{ if }\qq>1,
\end{cases}
\end{aligned}
\end{equation}
for $C(\tfrac{b\delta}{2},\tfrac{\qq+1}{2})$ according to \eqref{Cepsr}.
%
\item[(ii)]
Let $T>0$, $b,\delta,1-\delta>0$, and assume that
\begin{itemize}
\item
$\alpha(t,x)\geq\ul{\alpha}>0$
\item
$\alpha\in C(0,T;L_\infty(\Omega))$, $\alpha_t\in L^{4/3}(0,T;L^2(\Omega))$, $\nabla\alpha\in L^2(0,T;L^4(\Omega))$
\item
$\|\nabla\alpha\|_{L^2(0,T;L^4(\Omega))}< 1$
\item
$\|\alpha_t\|_{L_2(0,T;L_\infty(\Omega))}+\|\nabla\alpha\|_{L^2(0,T;L^4(\Omega))}<\frac{\ul{\alpha}}{2}$
\item
$u_0\in H_0^1(\Omega)$, $u_1\in W_0^{1,\qq+1}(\Omega)$.
\item
$\qq\geq 3$
\end{itemize}
Then \eqref{Wpot_viscosity_lin} has a weak solution
\begin{align*}
u\in H^2(0,T;L^2(\Omega))\cap C^1(0,T,W_0^{1,\qq+1}(\Omega))
\end{align*}
and any solution satisfies the energy estimate
\begin{equation}\label{enest_Wpot_viscosity_lin2}
\begin{aligned}
&\tfrac12\|\psii_{t}\|_{C(0,t;L_2(\Omega))}^2+\|\sqrt{\alpha}\nabla\psii\|_{C(0,t;L_2(\Omega))}^2
+\|\nabla\psii_{t}\|_{L^2(0,t;L_2(\Omega))}^2 \\
&\quad+\|\nabla\psii_{t}\|_{L_{\qq+1}(0,t;L_{\qq+1}(\Omega))}^{\qq+1}\\
&\quad+\|\psii_{tt}\|_{L^2(0,t;L_2(\Omega))}^2
+\|\nabla\psii_{t}\|_{C(0,t;L_2(\Omega))}^2
+|\nabla\psii_{t}|_{C(0,t;L_{\qq+1}(\Omega))}^{\qq+1}\\
&\leq
\bar{C}\Bigl(
\|\nabla\alpha\|_{L_2(0,T;L_4(\Omega))}^{\frac{\qq+1}{\qq-1}}
+|u_1|_{L_2(\Omega)}^2+ \|\alpha\|_{C(0,T;L^\infty(\Omega))}|\nabla u_0|_{L_2(\Omega)}^2\\
&\quad+(\|\alpha_t\|_{L^{4/3}(0,T;L_2(\Omega))}
+\|\nabla\alpha\|_{L_2(0,T;L_4(\Omega))}) |\nabla u_0|_{L_4(\Omega)}^2\\
&\quad+\|\alpha\|_{C(0,T;L_\infty(\Omega))}
\Bigl(|\nabla u_{0}|_{L_2(\Omega)}^2+|\nabla u_1|_{L^2(\Omega)} |\nabla u_0|_{L_2(\Omega)}\Bigr)\\
&\quad+\|\alpha_t\|_{L^{4/3}(0,T;L_2(\Omega))}^{\frac{\qq+1}{\qq-1}}
+(t^{3/2} \|\nabla\alpha\|_{L_2(0,T;L_4(\Omega))})^{\frac{\qq+1}{\qq-1}}\\
&\quad+|\nabla\psii_{1}|_{L_2(\Omega)}^2
+|\nabla\psii_{1}|_{L_{\qq+1}(\Omega)}^{\qq+1}
\Bigr)
\end{aligned}
\end{equation}
for some constant $\bar{C}>0$.
\end{enumerate}
\end{proposition}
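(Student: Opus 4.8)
The plan is to run the Galerkin scheme of the proof of Proposition~\ref{prop:W2lin}, adapted to the present structure, in which the $L_\infty$-coefficient $\alpha$ multiplies $\Delta\psii$ instead of $\psii_{tt}$ and in which there is neither a zeroth order damping term $f\psii_t$ nor a source $g$. Fix smooth $\{w_m\}_{m\in\N}$ forming an orthonormal basis of $L_2(\Omega)$ and a basis of $W_0^{1,\qq+1}(\Omega)$, put $V_n=\text{span}\{w_1,\dots,w_n\}$, choose $u_{0,n}\to u_0$ in $H_0^1(\Omega)$ and $u_{1,n}\to u_1$ (in $L_2(\Omega)$ for (i), in $W_0^{1,\qq+1}(\Omega)$ for (ii)) with $u_{0,n},u_{1,n}\in V_n$, and regard the projection of \eqref{Wpot_viscosity_lin} onto $V_n$ as an ODE system for $\psii_n(t)\in V_n$. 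As in Proposition~\ref{prop:W2lin} this system is uniquely solvable on a maximal subinterval, and the uniform a priori bounds below force existence on all of $[0,T]$. The proof then splits into (a) the Galerkin ODE, (b) the discrete versions of \eqref{enest_Wpot_viscosity_lin0} resp.\ \eqref{enest_Wpot_viscosity_lin2}, and (c) the weak limit $n\to\infty$.

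For part (i) I would test the discrete equation with $\psii_{n,t}$ and integrate in time. The only genuinely new term is $-\int_\Omega\alpha\,\Delta\psii_n\,\psii_{n,t}\,dx$, which upon integration by parts in space equals $\int_\Omega\alpha\,\nabla\psii_n\cdot\nabla\psii_{n,t}\,dx+\int_\Omega(\nabla\alpha\cdot\nabla\psii_n)\,\psii_{n,t}\,dx$. Since the energy \eqref{enest_Wpot_viscosity_lin0} keeps no $|\nabla\psii|$-term, I would not turn the first of these into a time derivative but instead use $\nabla\psii_n(s)=\nabla u_{0,n}+\int_0^s\nabla\psii_{n,t}(r)\,dr$, so that $|\nabla\psii_n(s)|_{L_2(\Omega)}\le|\nabla u_{0,n}|_{L_2(\Omega)}+\sqrt{T}\,\|\nabla\psii_{n,t}\|_{L_2(0,t;L_2(\Omega))}$, bound it by $\|\alpha(s)\|_{L_\infty(\Omega)}|\nabla\psii_n(s)|_{L_2}|\nabla\psii_{n,t}(s)|_{L_2}$ and absorb the $\sqrt{T}$-term (and, after a Young step, the $|\nabla u_{0,n}|_{L_2}$-term) into the linear damping $b(1-\delta)\int_0^t|\nabla\psii_{n,t}|_{L_2}^2$; the requirement $b_1>0$ is precisely the statement that the constants lost here and in the $\nabla\alpha$-term leave a positive residual, which is why $(\tfrac12+\sqrt T)\|\alpha\|_{L_2(0,T;L_\infty(\Omega))}$ occurs in $b_1$. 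For $\int_\Omega(\nabla\alpha\cdot\nabla\psii_n)\,\psii_{n,t}\,dx$ the treatment depends on $\qq$: if $\qq>1$ one uses $|\psii_{n,t}|_{L_\infty}\le C_{W_0^{1,\qq+1},L_\infty}^\Omega|\nabla\psii_{n,t}|_{L_{\qq+1}}$ (this is where $\qq>d-1$ enters) and Young's inequality \eqref{abeps}--\eqref{Cepsr} with exponent $r=\tfrac{\qq+1}{2}$ to absorb into $\tfrac{b\delta}{2}|\nabla\psii_{n,t}|_{L_{\qq+1}}^{\qq+1}$, producing the exponent $\tfrac{\qq+1}{\qq-1}$ on $\|\nabla\alpha\|_{L_2(0,T;L_2(\Omega))}$; if $\qq=1$ (hence $d=1$) there is no higher order damping, and one instead invokes $W_0^{1,2}(\Omega)\hookrightarrow L_\infty(\Omega)$ against the linear damping, whence $(C_{W_0^{1,\qq+1},L_\infty}^\Omega)^2$ in $b_1$. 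Collecting terms gives the discrete form of \eqref{enest_Wpot_viscosity_lin0}; pinning down the exact shape of $b_1$ and of the right-hand side is routine bookkeeping with \eqref{abeps}.

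For part (ii) I would additionally test the discrete equation with $\psii_{n,tt}$ (after the usual smoothing) and combine the resulting identity with the $\psii_{n,t}$-tested one, the latter now reorganised using the available $\alpha_t$ so that $-\tfrac12\int_\Omega\alpha_t|\nabla\psii_n|^2$ becomes a lower order term and $\tfrac12\|\sqrt{\alpha}\,\nabla\psii_n\|_{C(0,t;L_2(\Omega))}^2$ survives on the left. Testing with $\psii_{n,tt}$ turns the damping into $\tfrac{d}{dt}$ of $\tfrac{b(1-\delta)}{2}|\nabla\psii_{n,t}|_{L_2}^2+\tfrac{b\delta}{\qq+1}|\nabla\psii_{n,t}|_{L_{\qq+1}}^{\qq+1}$, and turns $-\int_\Omega\alpha\,\Delta\psii_n\,\psii_{n,tt}$, after integration by parts in space and then in time, into $\tfrac{d}{dt}\int_\Omega\alpha\,\nabla\psii_{n,t}\cdot\nabla\psii_n$ plus the lower order pieces $-\int_\Omega\alpha_t\,\nabla\psii_{n,t}\cdot\nabla\psii_n$, $-\int_\Omega\alpha|\nabla\psii_{n,t}|^2$ (whose time integral is controlled, via $\|\alpha\|_{C(0,T;L_\infty)}$, by the left-hand side of the estimate from (i)) and $-\int_\Omega(\nabla\alpha\cdot\nabla\psii_n)\,\psii_{n,tt}$. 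The remaining products are estimated by Hölder using $\nabla\alpha\in L^2(0,T;L_4)$, $\alpha_t\in L^{4/3}(0,T;L_2)$ and $\nabla\psii_{n,t}\in C(0,T;L_4)$ — here $\qq\ge3$ is used via $W_0^{1,\qq+1}(\Omega)\hookrightarrow W^{1,4}(\Omega)$ — together with $\nabla\psii_n(t)=\nabla u_{0,n}+\int_0^t\nabla\psii_{n,t}$, and Young's inequality then absorbs the top order quantities $\|\psii_{n,tt}\|_{L^2(0,t;L_2)}^2$, $\|\nabla\psii_{n,t}\|_{C(0,t;L_2)}^2$ and $|\nabla\psii_{n,t}|_{C(0,t;L_{\qq+1})}^{\qq+1}$ (again with $r=\tfrac{\qq+1}{2}$ for the $\nabla\alpha$-terms, giving the exponents $\tfrac{\qq+1}{\qq-1}$ in \eqref{enest_Wpot_viscosity_lin2}). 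The smallness conditions $\|\nabla\alpha\|_{L^2(0,T;L^4)}<1$ and $\|\alpha_t\|_{L_2(0,T;L_\infty)}+\|\nabla\alpha\|_{L^2(0,T;L^4)}<\tfrac{\ul{\alpha}}{2}$ are exactly what makes these absorptions close while keeping a positive coefficient in front of $\|\sqrt{\alpha}\,\nabla\psii_n\|_{C(0,t;L_2)}^2$, yielding the discrete version of \eqref{enest_Wpot_viscosity_lin2}.

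For part (c), the uniform bounds and reflexivity of $W^{1,\qq+1}(0,T;W_0^{1,\qq+1}(\Omega))$ yield a subsequence with $\psii_n\rightharpoonup\psii$ there (and, in case (ii), also weakly-$*$ in $H^2(0,T;L_2(\Omega))\cap C^1(0,T;W_0^{1,\qq+1}(\Omega))$) and $|\nabla\psii_{n,t}|^{\qq-1}\nabla\psii_{n,t}\rightharpoonup\chi$ in $L_{(\qq+1)/\qq}(0,T;L_{(\qq+1)/\qq}(\Omega))$. Passing to the limit in the weak formulation against a fixed test function $\phi_m\in C^\infty(0,T;V_m)$ with $\phi_m(T)=0$ is routine for every linear term; in particular $\int_0^T\int_\Omega\alpha\,\Delta\psii_n\,\phi_m=-\int_0^T\int_\Omega(\alpha\,\nabla\psii_n\cdot\nabla\phi_m+(\nabla\alpha\cdot\nabla\psii_n)\phi_m)$ converges because $\alpha\,\nabla\phi_m$ and $(\nabla\alpha)\phi_m$ are fixed $L_2(0,T;L_2(\Omega))$-functions against which $\nabla\psii_n\rightharpoonup\nabla\psii$ converges weakly — and the same integrability of $\alpha,\nabla\alpha$ shows $\psii_{tt}\in(L_{\qq+1}(0,T;W_0^{1,\qq+1}(\Omega)))^*$, so $\psii_t$ is an admissible test function. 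The one real obstacle — already implicit in the proof of Proposition~\ref{prop:W2lin} — is to identify $\chi=|\nabla\psii_t|^{\qq-1}\nabla\psii_t$, since Aubin--Lions only gives strong convergence of $\psii_{n,t}$ in $L_{\qq+1}(0,T;L_{\qq+1}(\Omega))$, not of $\nabla\psii_{n,t}$. I would close this by the standard monotonicity (Minty) argument, exploiting that the damping operator $v\mapsto-b(1-\delta)\Delta v-b\delta\,\text{div}(|\nabla v|^{\qq-1}\nabla v)$ is (strongly, via the $b(1-\delta)$-part) monotone: from the discrete energy identity obtained by testing with $\psii_{n,t}$, taking $\limsup_n$ (using weak lower semicontinuity for the quadratic terms, the strong $L_{\qq+1}$-convergence of $\psii_{n,t}$, and the limit passage in the $\alpha$-terms just described), and comparing with the energy identity for the limit, one gets $\limsup_n\int_0^T\int_\Omega|\nabla\psii_{n,t}|^{\qq+1}\le\int_0^T\int_\Omega\chi\cdot\nabla\psii_t$, whence Minty's lemma gives $\chi=|\nabla\psii_t|^{\qq-1}\nabla\psii_t$. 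That \eqref{enest_Wpot_viscosity_lin0} and \eqref{enest_Wpot_viscosity_lin2} hold for \emph{any} solution, not only the constructed one, follows by repeating the $\psii_t$- and $\psii_{tt}$-tests directly on the limit equation and using the sign of the nonlinear damping term as in \eqref{wsig_nonneg}. I expect this identification of the weak limit of the nonlinear damping, together with the careful handling of the merely $L_2$-in-time coefficient $\alpha$ in part~(i), to be the principal difficulty.
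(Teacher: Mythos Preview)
Your proposal is correct and follows essentially the same approach as the paper: Galerkin approximation, testing with $\psii_{n,t}$ for (i) (using the representation $\nabla\psii_n(s)=\nabla u_{0,n}+\int_0^s\nabla\psii_{n,t}$ rather than storing $|\nabla\psii_n|^2$ as potential energy), and additionally with $\psii_{n,tt}$ for (ii) (now writing the $\alpha$-term as $\tfrac{d}{dt}\tfrac12\int_\Omega\alpha|\nabla\psii_n|^2$ and exploiting $\qq\ge3$ to control $L_4$-norms by $L_{\qq+1}$-norms), followed by passage to the weak limit. The paper in fact only spells out the energy estimates and defers the remaining steps to the scheme of Proposition~\ref{prop:W2lin}; your treatment of the weak limit is more careful than the paper's, since you explicitly invoke a Minty-type monotonicity argument to identify the weak limit of $|\nabla\psii_{n,t}|^{\qq-1}\nabla\psii_{n,t}$, a point the paper simply asserts.
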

\begin{proof}
Since we deal with an autonomous second order in time PDE, the proof can be done directly via Galerkin discretization, energy estimates and weak limits.
\\
We therefore only provide the crucial energy estimates for (i) and (ii),
which are obtained by multiplying the discretized version of \eqref{Wpot_viscosity_lin} with $\psii_{n,t}$ and $\psii_{n,tt}$, respectively and integrating by parts with respect to space and time.
\\

For (i), from multiplication of the discretized version of \eqref{Wpot_viscosity_lin} with $\psii_{n,t}$ we get
\begin{equation*}
\begin{aligned}
&\tfrac12\left[|\psii_{n,t}|_{L_2(\Omega)}^2\right]_0^t
+\int_0^t \left(b(1-\delta)|\nabla\psii_{n,t}|_{L_2(\Omega)}^2
+b\delta|\nabla\psii_{n,t}|_{L_{\qq+1}(\Omega)}^{\qq+1}\right)\, ds
\\
&=\int_0^t\int_\Omega\Bigl( -\alpha \nabla \psii_{n,t}\nabla \psii_{n} -\psii_{n,t}\nabla\alpha \nabla \psii_{n}
\Bigr) \, dx \, ds\\
&\leq \int_0^t \Bigl(|\alpha(s)|_{L_\infty(\Omega)} |\nabla \psii_{n,t}(s)|_{L_2(\Omega)}
+|\nabla\alpha(s)|_{L_2(\Omega)}|\psii_{n,t}(s)|_{L_\infty(\Omega)}\Bigr)\\
&\qquad\left[|\nabla u_{0,n}|_{L_2(\Omega)}+\sqrt{s\int_0^s|\nabla \psii_{n,t}(\sigma)|_{L_2(\Omega)}^{2}\,d\sigma}\right] \, ds\\
&\leq \left(\|\alpha\|_{L_2(0,T;L_\infty(\Omega))} \sqrt{\int_0^t |\nabla \psii_{n,t}(s)|_{L_2(\Omega)}^2\, ds}
+\|\nabla\alpha\|_{L_2(0,T;L_2(\Omega))}\sqrt{\int_0^t |\psii_{n,t}(s)|_{L_\infty(\Omega)}^2\, ds}\right)\\
& \quad\cdot\left[|\nabla u_{0,n}|_{L_2(\Omega)}+\sqrt{t\int_0^t|\nabla \psii_{n,t}(\sigma)|_{L_2(\Omega)}^{2}\,d\sigma}\right]
\end{aligned}
\end{equation*}
which in case $q=1$ yields
\begin{equation}\label{Wpot_visc_lin_enid0}
\begin{aligned}
&\tfrac12\left[|\psii_{n,t}|_{L_2(\Omega)}^2\right]_0^t
+\int_0^t \Bigl(b(1-\delta)|\nabla\psii_{n,t}|_{L_2(\Omega)}^2
+b\delta|\nabla\psii_{n,t}|_{L_{\qq+1}(\Omega)}^{\qq+1}\Bigr)\, ds
\\
&\leq \|\alpha\|_{L_2(0,T;L_\infty(\Omega))} \Bigl( \sqrt{t}\int_0^t  |\nabla \psii_{n,t}(s)|_{L_2(\Omega)}^2\, ds +\tfrac12 \int_0^t |\nabla \psii_{n,t}(s)|_{L_2(\Omega)}^2\, ds
+\tfrac12 |\nabla u_{0,n}|_{L_2}^2\Bigr)\\
&\quad+\|\nabla\alpha\|_{L_2(0,T;L_2(\Omega))}
\Bigl(({C_{W_0^{1,\qq+1},L_\infty}^\Omega})^2\int_0^t |\nabla\psii_{n,t}(s)|_{L_{\qq+1}\Omega)}^2\, ds\\
&\qquad+\tfrac12 t\int_0^t |\nabla \psii_{n,t}(s)|_{L_2(\Omega)}^2\, ds +\tfrac12 |\nabla u_{0,n}|_{L_2}^2\Bigr).
\end{aligned}
\end{equation}
If $\qq>1$, using \eqref{abeps} and \eqref{Cepsr}, we further estimate
\begin{equation}\label{Wpot_visc_lin_enid0_qgt1}
\begin{aligned}
&\tfrac12\left[|\psii_{n,t}|_{L_2(\Omega)}^2\right]_0^t
+\int_0^t \Bigl(b(1-\delta)|\nabla\psii_{n,t}|_{L_2(\Omega)}^2
+b\delta|\nabla\psii_{n,t}|_{L_{\qq+1}(\Omega)}^{\qq+1}\Bigr)\, ds\\
&\leq \|\alpha\|_{L_2(0,T;L_\infty(\Omega))} \Bigl( \sqrt{t}\int_0^t  |\nabla \psii_{n,t}(s)|_{L_2(\Omega)}^2\, ds \\
&\quad+\tfrac12 \int_0^t |\nabla \psii_{n,t}(s)|_{L_2(\Omega)}^2\, ds
+\tfrac12 |\nabla u_{0,n}|_{L_2}^2\Bigr)+\tfrac{b\delta}{2}\int_0^t |\nabla\psii_{n,t}(s)|_{L^{\qq+1}\Omega)}^{\qq+1}\, ds\\
&\quad+\|\nabla\alpha\|_{L_2(0,T;L_2(\Omega))}
\Bigl(\tfrac12 t\int_0^t |\nabla \psii_{n,t}(s)|_{L_2(\Omega)}^2\, ds +\tfrac12 |\nabla u_{0,n}|_{L_2}^2\Bigr)\\
&
\quad+C(\tfrac{b\delta}{2},\tfrac{\qq+1}{2}) t (({C_{W_0^{1,\qq+1},L_\infty}^\Omega})^2 \|\nabla\alpha\|_{L_2(0,T;L_2(\Omega))})^{\frac{\qq+1}{\qq-1}}\,.
\end{aligned}
\end{equation}
By our assumptions on smallness of $\alpha$, the estimates \eqref{Wpot_visc_lin_enid0} and \eqref{Wpot_visc_lin_enid0_qgt1} yield
\begin{equation}\label{Wpot_visc_lin_enid0_1}
\begin{aligned}
&\tfrac12\left[|\psii_{n,t}|_{L_2(\Omega)}^2\right]_0^t
+\min\{b_1,\tfrac{b\delta}{2}\}\int_0^t \Bigl(|\nabla\psii_{n,t}|_{L_2(\Omega)}^2
+|\nabla\psii_{n,t}|_{L_{\qq+1}(\Omega)}^{\qq+1}\Bigr)\, ds
\\
\leq&
\begin{cases}
\Bigl(\|\alpha\|_{L_2(0,T;L_\infty(\Omega))} +\|\nabla\alpha\|_{L_2(0,T;L_2(\Omega))}\Bigr)\tfrac12 |\nabla u_{0,n}|_{L_2}^2&\mbox{ if }\qq=1,\\
\Bigl(\|\alpha\|_{L_2(0,T;L_\infty(\Omega))} +\|\nabla\alpha\|_{L_2(0,T;L_2(\Omega))}\Bigr)\tfrac12 |\nabla u_{0,n}|_{L_2}^2\\
\hfill+ C(\tfrac{b\delta}{2},\tfrac{\qq+1}{2}) t \|\nabla\alpha\|_{L_2(0,T;L_2(\Omega))}^{\frac{\qq+1}{\qq-1}}&\mbox{ if }\qq>1.
\end{cases}
\end{aligned}
\end{equation}
For (ii),
we  multiply \eqref{Wpot_viscosity_lin} with $\psii_{n,t}$ (but carry out the estimates after multiplication differently) and with $u_{n,tt}$.
By multiplication with $\psii_{n,t}$ we get
\begin{equation}\label{Wpot_visc_lin_enid0_iii}
\begin{aligned}
&\tfrac12\left[|\psii_{n,t}|_{L_2(\Omega)}^2+|\sqrt{\alpha}\nabla\psii_{n}|_{L_2(\Omega)}^2\right]_0^t+\int_0^t \Bigl(b(1-\delta)|\nabla\psii_{n,t}|_{L_2(\Omega)}^2
+b\delta|\nabla\psii_{n,t}|_{L_{\qq+1}(\Omega)}^{\qq+1}\Bigr)\, ds\\
&=\int_0^t\int_\Omega\Bigl( \tfrac12\alpha_t |\nabla \psii_{n}|^2 -\psii_{n,t}\nabla\alpha \nabla \psii_{n}
\Bigr) \, dx \, ds\\
&\leq \tfrac12\|\alpha_t\|_{L_2(0,T;L_\infty(\Omega))} \|\nabla \psii_{n}\|_{C(0,T;L_2(\Omega))}^2\\
&\quad+\|\nabla\alpha\|_{L_2(0,T;L_4(\Omega))}\Bigl(\tfrac12 \|\psii_{n,t}\|_{C(0,T;L_4(\Omega))}^2
+\tfrac12 \|\nabla \psii_{n}\|_{C(0,T;L_2(\Omega))}^2\Bigr)
\end{aligned}
\end{equation}
which due to our smallness assumption on $\alpha$ implies
\begin{equation}\label{Wpot_visc_lin_enid0_iii_1}
\begin{aligned}
&\tfrac12|\psii_{n,t}(t)|_{L_2(\Omega)}^2+c_1|\sqrt{\alpha}\nabla\psii_{n}(t)|_{L_2(\Omega)}^2+\int_0^t \Bigl(b(1-\delta)|\nabla\psii_{n,t}|_{L_2(\Omega)}^2
+b\delta|\nabla\psii_{n,t}|_{L_{\qq+1}(\Omega)}^{\qq+1}\Bigr)\, ds
\\
&\leq \epsilon_0 \|\psii_{n,t}\|_{C(0,T;L_4(\Omega))}^{\qq+1}+
C(\epsilon_0,\tfrac{\qq+1}{2}) (\tfrac12\|\nabla\alpha\|_{L_2(0,T;L_4(\Omega))})^{\frac{\qq+1}{\qq-1}} \\
&\quad+\tfrac12|u_{1,n}|_{L_2(\Omega)}^2+\tfrac12 \|\alpha\|_{C(0,T;L^\infty(\Omega))}|\nabla u_{0,n}|_{L_2(\Omega)}^2
\end{aligned}
\end{equation}
for $c_1=\frac{\ul{\alpha}}{2}-\|\alpha_t\|_{L_2(0,T;L_\infty(\Omega))}-\|\nabla\alpha\|_{L^2(0,T;L^4(\Omega))}$ and some $\epsilon_0 >0$.
\\
Multiplication with $\psii_{n,tt}$ yields
\begin{align*}
&\int_0^t |\psii_{n,tt}(s)|_{L_2(\Omega)}^2\, ds
+\left[\tfrac{b(1-\delta)}{2}|\nabla\psii_{n,t}|_{L_2}^2
+\tfrac{b\delta}{\qq+1}|\nabla\psii_{n,t}|_{L_{\qq+1}(\Omega)}^{\qq+1}
\right]_0^t
\\
&=\int_0^t\int_\Omega\left( -\alpha \nabla \psii_{n,tt}\nabla \psii_{n} -\psii_{n,tt}\nabla\alpha \nabla \psii_n
\right) \, dx \, ds\\
&=\int_0^t\int_\Omega\left( \alpha_t \nabla \psii_{n,t}\nabla \psii_{n}+\alpha|\nabla \psii_{n,t}|^2
-\psii_{n,tt}\nabla\alpha \nabla \psii_n\right)dx\, ds-\left[\int_\Omega\left( \alpha \nabla \psii_{n,t}\nabla \psii_{n} \right)\,dx\right]_0^t\\
& \leq
\int_0^t\left( |\alpha_t(s)|_{L_2(\Omega)} |\nabla \psii_{n,t}(s)|_{L_4(\Omega)}
+|\nabla\alpha(s)|_{L^4(\Omega)}|\psii_{n,tt}(s)|_{L^2(\Omega)} \right) \\
&\quad\left[|\nabla u_{0,n}|_{L_4(\Omega)}+\sqrt[4]{s^3\int_0^s|\nabla \psii_{n,t}(\sigma)|_{L_4(\Omega)}^{4}\,d\sigma}\right] \, ds
\\
&+\|\alpha\|_{C(0,T;L_\infty(\Omega))} \Bigl(|\nabla \psii_{n,t}(t)|_{L^2(\Omega)}
|\nabla \psii_{n}(t)|_{L^2(\Omega)}+|\nabla u_{1,n}|_{L^2(\Omega)} |\nabla u_{0,n}|_{L_2(\Omega)}+\int_0^t|\nabla \psii_{n,t}(s)|_{L^2(\Omega)}^2\, ds\Bigr)\\
&\leq
\|\alpha_t\|_{L^{4/3}(0,T;L_2(\Omega))} \left((1+\tfrac12 t^{\frac{3}{2}})\|\nabla \psii_{n,t}\|_{L^4(0,t;L_4(\Omega))}^2+\tfrac12|\nabla u_{0,n}|_{L_4(\Omega)}^2\right)\\
&+\|\nabla\alpha\|_{L_2(0,T;L_4(\Omega))} \left(\|\psii_{n,tt}\|_{L^2(0,t;L_2(\Omega))}^2
+\tfrac12 |\nabla u_{0,n}|_{L_4(\Omega)}^2 +\frac{t^{\frac{3}{2}} }{2}\|\nabla \psii_{n,t}\|_{L^4(0,T;L_4(\Omega))}^2\right)\\
&\quad+\|\alpha\|_{C(0,T;L_\infty(\Omega))} \Bigl(|\nabla u_{1,n}|_{L^2(\Omega)} |\nabla u_{0,n}|_{L_2(\Omega)}
+\|\nabla \psii_{n,t}\|_{L^2(0,t;L_2(\Omega))}^2\Bigr)\\
&\quad+\tfrac{b(1-\delta)}{4}|\nabla \psii_{n,t}(t)|_{L^2(\Omega)}^2
+\tfrac{1}{b(1-\delta)}\|\alpha\|_{C(0,T;L_\infty(\Omega))}^2|\nabla \psii_{n}(t)|_{L^2(\Omega)}^2\\
&\leq
\|\alpha_t\|_{L^{4/3}(0,T;L_2(\Omega))}\tfrac12|\nabla u_{0,n}|_{L_4(\Omega)}^2\\
&\quad+\|\nabla \psii_{n,t}\|_{L^4(0,T;L_4(\Omega))}^{\qq+1}
+C(1,\tfrac{\qq+1}{2})((1+\tfrac12 t^{\frac{3}{2}})\|\alpha_t\|_{L^{4/3}(0,T;L_2(\Omega))})^{\frac{\qq+1}{\qq-1}}\\
&\quad+\|\nabla\alpha\|_{L_2(0,T;L_4(\Omega))} \Bigl(\|\psii_{n,tt}(s)|_{L^2(0,t;L_2(\Omega))}^2
+\tfrac12 |\nabla u_{0,n}|_{L_4(\Omega)}^2\Bigr)\\
&\quad+\|\nabla \psii_{n,t}\|_{L^4(0,T;L_4(\Omega))}^{\qq+1}
+C(1,\tfrac{\qq+1}{2})(t^{3/2}\|\nabla\alpha\|_{L_2(0,T;L_4(\Omega))})^{\frac{\qq+1}{\qq-1}}\\
&\quad+\|\alpha\|_{C(0,T;L_\infty(\Omega))} \Bigl(|\nabla u_{1,n}|_{L^2(\Omega)} |\nabla u_{0,n}|_{L_2(\Omega)}
+\|\nabla \psii_{n,t}\|_{L^2(0,t;L_2(\Omega))}^2\Bigr)\\
&\quad+\tfrac{b(1-\delta)}{4}|\nabla \psii_{n,t}(t)|_{L^2(\Omega)}^2
+\tfrac{1}{b(1-\delta)}\|\alpha\|_{C(0,T;L_\infty(\Omega))}^2|\nabla \psii_{n}(t)|_{L^2(\Omega)}^2\,,
\end{align*}
which, by smallness of $\alpha$, implies
\begin{equation}\label{Wpot_visc_lin_enid1_iii_1}
\begin{aligned}
&\int_0^t c_2|\psii_{n,tt}(s)|_{L_2(\Omega)}^2\, ds
+\left[\tfrac{b(1-\delta)}{4}|\nabla\psii_{n,t}|_{L_2}^2
+\tfrac{b\delta}{\qq+1}|\nabla\psii_{n,t}|_{L_{\qq+1}(\Omega)}^{\qq+1}
\right]_0^t\\
&\leq
(\|\alpha_t\|_{L^{4/3}(0,T;L_2(\Omega))}
+\|\nabla\alpha\|_{L_2(0,T;L_4(\Omega))}) \tfrac12 |\nabla u_{0,n}|_{L_4(\Omega)}^2\\
&\quad+\|\alpha\|_{C(0,T;L_\infty(\Omega))}
\Bigl(|\nabla u_{0,n}|_{L_2(\Omega)}^2+|\nabla u_{1,n}|_{L^2(\Omega)} |\nabla u_{0,n}|_{L_2(\Omega)}\\
&\quad+\|\nabla \psii_{n,t}\|_{L^2(0,t;L_2(\Omega))}^2\Bigr)+ 2 \|\nabla \psii_{n,t}\|_{L^4(0,T;L_4(\Omega))})^{\qq+1}\\
&\quad+C(1,\tfrac{\qq+1}{2})\|\alpha_t\|_{L^{4/3}(0,T;L_2(\Omega))}^{\frac{\qq+1}{\qq-1}}
+C(1,\tfrac{\qq+1}{2})(t^{3/2}\|\nabla\alpha\|_{L_2(0,T;L_4(\Omega))})^{\frac{\qq+1}{\qq-1}}\\
&\quad+\tfrac{1}{b(1-\delta)}\|\alpha\|_{C(0,T;L_\infty(\Omega))}^2|\nabla \psii_{n}(t)|_{L^2(\Omega)}^2
\end{aligned}
\end{equation}
for $c_2=1-\|\nabla\alpha\|_{L^2(0,T;L^4(\Omega))}$.
Adding \eqref{Wpot_visc_lin_enid0_iii_1} and $\epsilon$ times \eqref{Wpot_visc_lin_enid1_iii_1} with $\epsilon_0,\epsilon$ sufficiently small
\begin{align*}
&\epsilon<\min\left\{\frac{b(1-\delta)}{2\|\alpha\|_{C(0,T;L_\infty(\Omega))}},
\frac{c_1\ul{\alpha}b(1-\delta)}{2\|\alpha\|_{C(0,T;L_\infty(\Omega))}^2},
\frac{b\delta}{4({C_{L_{\qq+1},L_4}^\Omega})^{\qq+1}}\right\}\,,
\\
&\epsilon_0<\frac{b\delta\epsilon}{(\qq+1)({C_{L_{\qq+1},L_4}^\Omega})^{\qq+1}},
\end{align*}
where we use $\qq+1\geq 4$, yields the energy estimate
\begin{align*}
&\tfrac12\|\psii_{n,t}\|_{C(0,t;L_2(\Omega))}^2+c_1\|\sqrt{\alpha}\nabla\psii_{n}\|_{C(0,t;L_2(\Omega))}^2
+\tfrac{b(1-\delta)}{2}\|\nabla\psii_{n,t}\|_{L^2(0,t;L_2(\Omega))}^2 \\
&\quad+c_3\|\nabla\psii_{n,t}\|_{L^{\qq+1}(0,t;L_{\qq+1}(\Omega))}^{\qq+1}
+\epsilon c_2\|\psii_{n,tt}\|_{L^2(0,t;L_2(\Omega))}^2\\
&\quad+\epsilon \tfrac{b(1-\delta)}{2} \|\nabla\psii_{n,t}\|_{C(0,t;L_2(\Omega))}^2
+\epsilon \tfrac{b\delta}{\qq+1}|\nabla\psii_{n,t}|_{C(0,t;L_{\qq+1}(\Omega))}^{\qq+1}\\
&\leq
C(\epsilon_0,\tfrac{\qq+1}{2})(\tfrac12\|\nabla\alpha\|_{L_2(0,T;L_4(\Omega))})^{\frac{\qq+1}{\qq-1}}\\
&\quad+\tfrac12|u_{1,n}|_{L_2(\Omega)}^2+\tfrac12 \|\alpha\|_{C(0,T;L^\infty(\Omega))}|\nabla u_{0,n}|_{L_2(\Omega)}^2\\
&\quad+\epsilon(\|\alpha_t\|_{L^{4/3}(0,T;L_2(\Omega))}
+\|\nabla\alpha\|_{L_2(0,T;L_4(\Omega))})\tfrac12 |\nabla u_{0,n}|_{L_4(\Omega)}^2\\
&\quad+\epsilon\|\alpha\|_{C(0,T;L_\infty(\Omega))}
\Bigl(|\nabla u_{0,n}|_{L_2(\Omega)}^2+|\nabla u_{1,n}|_{L^2(\Omega)} |\nabla u_{0,n}|_{L_2(\Omega)}\Bigr)\\
&\quad+\epsilon C(1,\tfrac{\qq+1}{2})\|\alpha_t\|_{L^{4/3}(0,T;L_2(\Omega))}^{\frac{\qq+1}{\qq-1}}
+\epsilon C(1,\tfrac{\qq+1}{2})(t^{\frac{3}{2}} \|\nabla\alpha\|_{L_2(0,T;L_4(\Omega))})^{\frac{\qq+1}{\qq-1}}\\
&\quad+\epsilon \tfrac{b(1-\delta)}{2} |\nabla\psii_{1,n}|_{L_2(\Omega)}^2
+\epsilon \tfrac{b\delta}{\qq+1}|\nabla\psii_{1,n}|_{L_{\qq+1}(\Omega)}^{\qq+1},
\end{align*}
for some $c_3>0,$ which leads to \eqref{enest_Wpot_viscosity_lin2}.
\end{proof}
\begin{remark}
Note that multiplication with $\psii_t$ (via $q\geq d-1$) according to \eqref{enest_Wpot_viscosity_lin0} only gives an
$L_2(0,T;L_\infty(\Omega))$ bound on $\psii_{t}$ and in order to obtain a $C(0,T;L_\infty(\Omega))$ bound,
multiplication with $\psii_{tt}$ is required, cf. \eqref{estLinftyW3}.
Thus part (i) of Proposition \ref{prop:Wpot_viscosity_lin} is only an intermediate result.
\\
In part (ii) we make use of a positive sign of the term pertaining to the potential energy in the equation.
However this leads to an $L_4(\Omega)$ norm term on the right hand side of the energy identity, which we can
only dominate by means of the $L_{\qq+1}(\Omega)$ norm term on the left hand side. Hence, for this part, $\qq+1\geq 4$ is needed.
\end{remark}
\begin{theorem}\label{th:W3locex}
Let $c^2,b,\delta,1-\delta>0$, $k\in\R$, $\qq\geq3$.\\
There exist $\kappa>0$, $\bar{m}>0$, $\bar{M}>0$, $T>0$  such that for all $\psii_0 \in H_0^1(\Omega)$, $u_1\in H_0^1(\Omega) \cap W^{1,\qq+1}(\Omega)$ with
\begin{equation}\label{kappaW3}
|\psii_1|_{L_2(\Omega)}^2 +|\nabla \psii_0|_{L_2(\Omega)}^2
+ |\nabla \psii_1|_{L_2(\Omega)}^2
+ |\nabla \psii_1|_{L_{\qq+1}(\Omega)}^{\qq+1}
\leq \kappa^2
\end{equation}
there exists a weak solution $\psii\in \cW\subseteq X=H^2(0,T;L_2(\Omega))\cap C^1(0,T;W^{1,\qq+1}(\Omega))$ of \eqref{Wpot_viscosity} where
\begin{eqnarray}\label{defcW_W3}
\cW =\{v\in X
&:& \|v_{tt}\|_{L_2(0,T;L_2(\Omega))}\leq \bar{m}\nonumber\\
&& \wedge \|\nabla v_t\|_{C(0,T;L_2(\Omega))}\leq \bar{m}\nonumber\\
&& \wedge \|\nabla v_t\|_{C(0,T;L_{\qq+1}(\Omega))}\leq \bar{M}\}\,.
\end{eqnarray}
\end{theorem}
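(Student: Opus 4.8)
The plan is to obtain $\psii$ as a fixed point of the operator $\cT:\cW\to X$, $v\mapsto\cT v=\psii$, where $\psii$ is the weak solution of the linear problem \eqref{Wpot_viscosity_lin} with variable principal coefficient
\[
\alpha=\frac{c^2}{1-2\tilde{k}v_t}\,,
\]
furnished by Proposition \ref{prop:Wpot_viscosity_lin}(ii); this is exactly the scheme used in the proof of Theorem \ref{th:W2locex}, now adapted to the potential formulation. The first step is to check, for $v\in\cW$, that this $\alpha$ meets the hypotheses of Proposition \ref{prop:Wpot_viscosity_lin}(ii). Since $v\in\cW$ forces $\nabla v_t\in C(0,T;L_{\qq+1}(\Omega))$ with norm $\leq\bar M$, the embedding \eqref{W01qp1linfty} together with the Poincar\'e--Friedrichs inequality gives $|v_t(t,x)|\leq C_{W_0^{1,\qq+1},L_\infty}^\Omega\bar M$; choosing $\bar M$ so small that $2|\tilde{k}|\,C_{W_0^{1,\qq+1},L_\infty}^\Omega\bar M<1$ yields $0<\underline{\alpha}:=\frac{c^2}{1+2|\tilde{k}|C_{W_0^{1,\qq+1},L_\infty}^\Omega\bar M}\leq\alpha\leq\overline{\alpha}:=\frac{c^2}{1-2|\tilde{k}|C_{W_0^{1,\qq+1},L_\infty}^\Omega\bar M}$, so the factor $1-2\tilde{k}\psii_t$ in \eqref{Wpot_viscosity} is kept away from $0$, and moreover $\alpha\in C(0,T;L_\infty(\Omega))$ since $v_t\in C(0,T;W_0^{1,\qq+1}(\Omega))$.

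For the remaining regularity and smallness requirements on $\alpha$ one differentiates, $\nabla\alpha=\frac{2\tilde{k}c^2}{(1-2\tilde{k}v_t)^2}\nabla v_t$ and $\alpha_t=\frac{2\tilde{k}c^2}{(1-2\tilde{k}v_t)^2}v_{tt}$. Here the hypothesis $\qq\geq3$ enters: on the bounded Lipschitz domain $\Omega$ it gives $L_{\qq+1}(\Omega)\hookrightarrow L_4(\Omega)$, hence $\|\nabla v_t(t)\|_{L_4(\Omega)}\leq C\,\|\nabla v_t(t)\|_{L_{\qq+1}(\Omega)}\leq C\bar M$ and therefore $\|\nabla\alpha\|_{L_2(0,T;L_4(\Omega))}\leq C\sqrt{T}\,\bar M$, while $v_{tt}\in L_2(0,T;L_2(\Omega))$ with norm $\leq\bar m$ yields $\|\alpha_t\|_{L^{4/3}(0,T;L_2(\Omega))}\leq C\,T^{1/4}\bar m$; thus every smallness constraint on $\alpha$ in Proposition \ref{prop:Wpot_viscosity_lin}(ii) is bounded by a fixed power of $T$ times $\bar M$ or $\bar m$. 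Consequently, fixing $\bar M$ as above and then taking $\bar m$ and $T$ sufficiently small, Proposition \ref{prop:Wpot_viscosity_lin}(ii) applies and produces $\cT v=\psii\in X=H^2(0,T;L_2(\Omega))\cap C^1(0,T;W^{1,\qq+1}(\Omega))$ satisfying the energy estimate \eqref{enest_Wpot_viscosity_lin2}.

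Next I would establish the self-mapping property $\cT(\cW)\subseteq\cW$. Substituting into the right-hand side of \eqref{enest_Wpot_viscosity_lin2} the bounds $\|\alpha\|_{C(0,T;L_\infty(\Omega))}\leq\overline{\alpha}$, $\|\nabla\alpha\|_{L_2(0,T;L_4(\Omega))}\leq C\sqrt{T}\bar M$, $\|\alpha_t\|_{L^{4/3}(0,T;L_2(\Omega))}\leq CT^{1/4}\bar m$, and the initial-data bounds $|\nabla\psii_0|_{L_2(\Omega)}^2,\,|\nabla\psii_1|_{L_2(\Omega)}^2,\,|\nabla\psii_1|_{L_{\qq+1}(\Omega)}^{\qq+1}\leq\kappa^2+\kappa^{\qq+1}$ that follow from \eqref{kappaW3}, the right-hand side is made as small as desired by choosing $\kappa$ small compared with $\bar m$ and $T$ small; since the left-hand side of \eqref{enest_Wpot_viscosity_lin2} controls $\|\psii_{tt}\|_{L_2(0,T;L_2(\Omega))}^2$, $\|\nabla\psii_t\|_{C(0,T;L_2(\Omega))}^2$ and $\|\nabla\psii_t\|_{C(0,T;L_{\qq+1}(\Omega))}^{\qq+1}$ with constants depending only on $\underline{\alpha},b,\delta$, this gives $\|\psii_{tt}\|_{L_2(0,T;L_2(\Omega))}\leq\bar m$, $\|\nabla\psii_t\|_{C(0,T;L_2(\Omega))}\leq\bar m$ and $\|\nabla\psii_t\|_{C(0,T;L_{\qq+1}(\Omega))}\leq\bar M$, i.e.\ $\cT v\in\cW$.

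Finally, contractivity is proved as in Theorem \ref{th:W2locex}. For $v^1,v^2\in\cW$ and $\psii^i=\cT v^i$, the difference $\hat{\psii}=\psii^1-\psii^2$ solves a linear problem whose forcing contains $c^2\bigl(\tfrac{1}{1-2\tilde{k}v^1_t}-\tfrac{1}{1-2\tilde{k}v^2_t}\bigr)\Delta\psii^2=\frac{2\tilde{k}c^2\,\hat{v}_t}{(1-2\tilde{k}v^1_t)(1-2\tilde{k}v^2_t)}\Delta\psii^2$ together with the nonlinear damping difference, the latter being rewritten, exactly as in \eqref{W2_uniq}--\eqref{wsig_nonneg}, through the mean value identity for $F(\lambda)=|\lambda|^{\qq-1}\lambda$ so that after testing with $\hat{\psii}_t$ it contributes nonnegatively on the left. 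Testing the difference equation with $\hat{\psii}_t$, integrating over space and time, using $v^i,\psii^i\in\cW$ and the embedding $L_{\qq+1}(\Omega)\hookrightarrow L_4(\Omega)$, one bounds the remaining terms by $C\bar m$ times $|||\hat{v}|||$ in a weaker norm such as $|||v|||=\|v_t\|_{L_\infty(0,T;L_2(\Omega))}+\|\nabla v\|_{L_\infty(0,T;L_2(\Omega))}+\|\nabla v_t\|_{L_2(0,T;L_2(\Omega))}$, obtaining $|||\hat{\psii}|||\leq\tfrac12|||\hat{v}|||$ once $\bar m$ is small enough; Banach's fixed point theorem then furnishes a (unique in $\cW$) fixed point $\psii$, which by construction is a weak solution of \eqref{Wpot_viscosity}. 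I expect the main obstacle to be exactly this contraction step: the quasilinear term $\hat{v}_t\,\Delta\psii^2$ pairs a difference of first-order quantities with a second-order quantity that, for $\psii^2\in\cW$, is controlled only in $L_2(0,T;L_2(\Omega))$; the remedy is to integrate by parts and exploit $\qq\geq3$ to place $\nabla\hat{v}_t$ in $C(0,T;L_4(\Omega))$ and pair it against $\nabla\psii^2\in C(0,T;L_2(\Omega))$, at the price of the smallness of $\bar m$ and $T$.
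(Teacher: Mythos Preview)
Your first three steps---defining the fixed-point operator $\cT$ via Proposition~\ref{prop:Wpot_viscosity_lin}(ii), verifying the hypotheses on $\alpha=\tfrac{c^2}{1-2\tilde k v_t}$ through the bounds on $\|\alpha\|_{C(0,T;L_\infty)}$, $\|\nabla\alpha\|_{L_2(0,T;L_4)}$, $\|\alpha_t\|_{L^{4/3}(0,T;L_2)}$, and using \eqref{enest_Wpot_viscosity_lin2} to get the self-map $\cT(\cW)\subseteq\cW$---coincide with the paper's proof and are correct.

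The gap is your contraction step. The paper does \emph{not} prove contractivity: it stops after the self-mapping property and concludes existence from closedness of $\cW$ (a Schauder-type compactness argument, as made explicit in the later proof of Theorem~\ref{th:W1locex}). In fact Remark~\ref{rem:Wpot_viscosity_uniqueness} shows that contractivity fails here, and the fatal term is not the one you flag. When you test the difference equation with $\hat{\psii}_t$ and write the principal part as $\tfrac{1}{2}\tfrac{d}{dt}\int \alpha^1|\nabla\hat{\psii}|^2\,dx$, you pick up the commutator
\[
-\tfrac12\int_0^t\!\!\int_\Omega \alpha^1_t\,|\nabla\hat{\psii}|^2\,dx\,ds
=-\int_0^t\!\!\int_\Omega \frac{\tilde k c^2}{(1-2\tilde k v^1_t)^2}\,v^1_{tt}\,|\nabla\hat{\psii}|^2\,dx\,ds\,.
\]
For $v^1\in\cW$ you only have $v^1_{tt}\in L_2(0,T;L_2(\Omega))$, and the left-hand side of the estimate controls $\nabla\hat{\psii}$ only in $L_\infty(0,T;L_2(\Omega))$; neither factor has the extra integrability needed to close the estimate, and the integration-by-parts trick you propose for the forcing $\hat v_t\,\Delta\psii^2$ does nothing for this term. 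Replace the Banach-contraction conclusion by a compactness argument: $\cW$ is convex, closed, and bounded in a reflexive space, and $\cT$ maps it to itself, so a weak-limit (Schauder) argument yields the fixed point. Note accordingly that the theorem asserts only existence, not uniqueness.
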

\begin{proof}
Relying on Proposition \ref{prop:Wpot_viscosity_lin}, we carry out the proof by means of a fixed point argument.
To this end, we define the fixed point operator $\cT :\cW \to X$, $v\mapsto \cT v=\psii$ where $\psii$ solves \eqref{Wpot_viscosity_lin} with
\begin{equation}\label{alpha_Wpotviscosity}
\alpha=\frac{c^2}{1-2\tilde{k}v_t}\,,
\end{equation}
which is well-defined by Proposition \ref{prop:Wpot_viscosity_lin} (ii), since we have
\[
\begin{aligned}
&\alpha(t,x)\geq\frac{c^2}{1+2\tilde{k}\|v_t\|_{C(0,T;L_\infty(\Omega))}}
\geq \frac{c^2}{1+2\tilde{k}C_{W_0^{1,\qq+1},L_\infty}^\Omega\bar{M}}\geq \frac{2c^2}{3}\\
&\|\alpha\|_{C(0,T;L_\infty(\Omega))}\leq\frac{c^2}{1-2\tilde{k}\|v_t\|_{C(0,T;L_\infty(\Omega))}}
\leq \frac{c^2}{1-2\tilde{k}C_{W_0^{1,\qq+1},L_\infty}^\Omega\bar{M}}\leq 2c^2\\
&\|\nabla\alpha\|_{L_2(0,T;L_4(\Omega))}=\|\frac{2\tilde{k}c^2}{(1-2\tilde{k}v_t)^2}\nabla v_t\|_{L_2(0,T;L_4(\Omega))}\\
&\quad \leq \frac{2\tilde{k}c^2}{(1-2\tilde{k}C_{W_0^{1,\qq+1},L_\infty}^\Omega\bar{M})^2}
 C_{L_{\qq+1},L_4}^\Omega \sqrt{T}\bar{M}
\leq \tilde{k}c^2 C_{L_{\qq+1},L_4}^\Omega \sqrt{T}\bar{M}
\\
&\|\alpha_t\|_{L^{4/3}(0,T;L_2(\Omega))}=\|\frac{2\tilde{k}c^2}{(1-2\tilde{k}v_t)^2} v_{tt}\|_{L^{4/3}(0,T;L_2(\Omega))}\\
&\quad \leq \frac{2\tilde{k}c^2}{(1-2\tilde{k}C_{W_0^{1,\qq+1},L_\infty}^\Omega\bar{M})^2}
\sqrt[4]{T} \bar{m}
\leq 8\tilde{k}c^2 \sqrt[4]{T} \bar{m}
\,,
\end{aligned}
\]
where we have used $v\in\cW$, so the positivity and smallness assumptions on $\alpha$ in Proposition \ref{prop:Wpot_viscosity_lin} are satisfied, provided $\bar{m}$, $\bar{M}$ are sufficiently small, in particular
$\bar{M}\leq (4\tilde{k}C_{W_0^{1,\qq+1},L_\infty}^\Omega)^{-1}$.
\\
Hence, the energy estimate \eqref{enest_Wpot_viscosity_lin2} yields
\begin{equation*}
\begin{aligned}
&\tfrac12\|\psii_{n,t}\|_{C(0,T;L_2(\Omega))}^2+\|\sqrt{\alpha}\nabla\psii_{n}\|_{C(0,T;L_2(\Omega))}^2
+\|\nabla\psii_{n,t}\|_{L^2(0,T;L_2(\Omega))}^2 \\
&+\|\nabla\psii_{n,t}\|_{L^{\qq+1}(0,T;L_{\qq+1}(\Omega))}^{\qq+1}
\\
&+\|\psii_{n,tt}\|_{L^2(0,T;L_2(\Omega))}^2
+\|\nabla\psii_{n,t}\|_{C(0,T;L_2(\Omega))}^2
+|\nabla\psii_{n,t}|_{C(0,T;L_{\qq+1}(\Omega))}^{\qq+1}\\
\leq &
\bar{C}\Bigl(
(\sqrt{T}\bar{M})^{\frac{\qq+1}{\qq-1}}
+|u_1|_{L_2(\Omega)}^2+ |\nabla u_0|_{L_2(\Omega)}^2\\
&+(\sqrt[4]{T}\bar{m}+\sqrt{T}\bar{M}) |\nabla u_0|_{L_4(\Omega)}^2\\
&+|\nabla u_0|_{L_2(\Omega)}^2+|\nabla u_1|_{L^2(\Omega)} |\nabla u_0|_{L_2(\Omega)}\\
&+(\sqrt[4]{T}\bar{m})^{\frac{\qq+1}{\qq-1}}
+(T^2\bar{M})^{\frac{\qq+1}{\qq-1}}\\
&+|\nabla\psii_1|_{L_2(\Omega)}^2
+|\nabla\psii_1|_{L_{\qq+1}(\Omega)}^{\qq+1}
\Bigr)\,.
\end{aligned}
\end{equation*}
Thus by making $T$ and the bound $\kappa$ on the initial data sufficiently small, by an appropriate choice of
$\bar{m}$, $\bar{M}$ we can achieve that $\psii\in\cW$. By closedness of $\cW$ we obtain existence of a solution.
\\
\end{proof}
\begin{remark}\label{rem:Wpot_viscosity_uniqueness}
To see that contractivity and therefore also uniqueness fails for \eqref{Wpot_viscosity}, consider two solutions $\psii^i=\cT(v^i)$, $i=1,2$
as well as their difference $\hat{u}=\psii^1-\psii^2$, which is a weak solution to
\begin{equation}\label{Wpot_vicosity_lin_diff}
\begin{cases}
\hat{u}_{tt}-\frac{c^2}{1-2\tilde{k}v^1_t}\Delta \hat{u}-b(1-\delta)\Delta \hat{u}_t b\delta\int_0^1\,\text{div}\Bigl( w^\sigma
\Bigl[|\nabla (u^2+\sigma \hat{u})_t|^2  \nabla \hat{u}_t\\
+(\qq-1)(\nabla (u^2+\sigma \hat{u})_t \nabla \hat{u}_t)
\nabla(u^2+\sigma \hat{u})_t \Bigr]\,  \Bigr)d\sigma
=\frac{2\tilde{k}c^2}{(1-2\tilde{k}v^1_t)(1-2\tilde{k}v^2_t)}\Delta v^2\hat{v}_t
\\
(\hat{u},\hat{u}_t)|_{t=0}=(0,0),
\\
\hat{u}|_{\partial \Omega} =0
,\\
\end{cases}
\end{equation}
with $\hat{v}=v^1-v^2$ and $w^\sigma(x,t)=|\nabla(u^2+\sigma\hat{u})_t(x,t)|^{q-3}$ as in \eqref{wsig}.
Upon multiplication with $\hat{u}_t$ and integration with respect to space and time, like in the proof of Proposition \ref{prop:W2lin} and
Theorem \ref{th:W2locex}, the $b\delta$ term yields a nonnegative contribution on the left hand side.
Therewith, similarly to \eqref{Wpot_visc_lin_enid0_iii}, we obtain
\[
\begin{aligned}
&\tfrac12\left[|\hat{u}_t|_{L_2(\Omega)}^2+|\frac{c}{\sqrt{1-2\tilde{k}v^1_t}}\nabla\hat{u}|_{L_2(\Omega)}^2\right]_0^t
+\int_0^t b(1-\delta)|\nabla\hat{u}_t|_{L_2(\Omega)}^2
\\
=&\int_0^t\int_\Omega\Bigl( -\frac{\tilde{k}c^2}{(1-2\tilde{k}v^1_t)^2} v^1_{tt} |\nabla \hat{u}|^2
+\hat{u}_t \frac{2\tilde{k}c^2}{(1-2\tilde{k}v^1_t)^2}\nabla v^1_t \nabla \hat{u}\\
&\quad
-\frac{2\tilde{k}c^2}{(1-2\tilde{k}v^1_t)(1-2\tilde{k}v^2_t)}\nabla v^2 \nabla\hat{v}_t
-\frac{4\tilde{k}^2c^2}{(1-2\tilde{k}v^1_t)^2(1-2\tilde{k}v^2_t)}\nabla v^1_t\nabla v^2 \hat{v}_t\\
&\quad
-\frac{4\tilde{k}^2c^2}{(1-2\tilde{k}v^1_t)(1-2\tilde{k}v^2_t)^2}\nabla v^2_t\nabla v^2 \hat{v}_t
\Bigr) \, dx \, ds
\end{aligned}
\]
We see that estimation of the first term on the right hand side (the one containing $v^1_{tt} |\nabla \hat{u}|^2$) would require higher spatial summability of $v^1_{tt}$ and/or of $\nabla \hat{u}$. The estimates on these two quantities, following from $v^1\in\cW$ and from the left hand side of the above estimate on $\hat{u}$ are not strong enough for this purpose. However, multiplication of \eqref{Wpot_viscosity} with higher time or space derivatives of $\psii$ leads to difficulties involving the strong nonlinear damping term; the same holds for multiplication of \eqref{Wpot_vicosity_lin_diff} with $\hat{u}_{tt}$.
\end{remark}

\section{Linear wave -- nonlinear Westervelt equation coupling via nonlinear strong damping \eqref{Wacoustic_viscosity}}
\label{secWacoustic_viscosity}
We make the following assumptions on the spatially varying coefficients
$\lambda$, $k$, $\varrho$, $b$, $\delta$ in \eqref{Wacoustic_viscosity}:

\begin{equation}\label{condcoeffacoust}
\begin{cases}
k, \lambda, \varrho, b\geq0, \delta \in L_\infty(\Omega), 0 \leq\delta\leq 1,\\
\exists \ul{\lambda},\ul{\varrho}>0: \
\ul{\lambda}\leq \lambda(x) \,, \ \ul{\varrho}\leq\varrho(x)\
\mbox{ in }\Omega,\\
\exists \ul{b},\ul{\delta}>0: \
\ul{b}\leq b(x) \,, \ \ul{\delta}\leq\delta(x)\
\mbox{ in }\Omega_{nl},\\
\mbox{where } \Omega_{nl}:=\{x\in\Omega\, : \, k(x)\not=0\}.
\end{cases}
\end{equation}
Since \eqref{Wacoustic_viscosity} can be viewed as a version of \eqref{Wpress_viscosity} with spatially varying coefficients, inspection of the proof of Theorem \ref{th:W2locex} immediately yields

\begin{corollary}
Let $\qq>d-1$ and suppose assumptions \eqref{condcoeffacoust} are satisfied.\\
For any $T>0$ there is a $\kappa_T>0$ such that for all $u_0,u_1\in H_0^1(\Omega)\cap W^{1,\qq+1}(\Omega)$ with
$$E_1[u](0)+|\nabla u_0|_{L_{\qq+1}}^2 \leq \kappa_T^2\,,$$
there exists a weak solution $u\in \cW $ (defined as in \eqref{defcW} with
\eqref{smallnessMbar} and $\bar{m}$ sufficiently small) of \eqref{Wacoustic_viscosity} and $u$ is unique in $\cW $.
\end{corollary}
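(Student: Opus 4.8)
The plan is to rerun the fixed-point scheme in the proof of Theorem~\ref{th:W2locex} essentially verbatim, carrying the spatially varying coefficients $\tfrac1{\lambda(x)}$, $\tfrac1{\varrho(x)}$, $b(x)$, $\delta(x)$ wherever the constants $1$, $c^2$, $b$, $\delta$ occurred. Concretely, one first establishes the variable-coefficient analogue of Proposition~\ref{prop:W2lin} for the linearised problem
\[
\tfrac1{\lambda(x)}(1-2k(x)v)u_{tt}-\text{div}\,\bigl(\tfrac1{\varrho(x)}\nabla u\bigr)-\text{div}\,\bigl(b(x)\bigl((1-\delta(x))+\delta(x)|\nabla u_t|^{\qq-1}\bigr)\nabla u_t\bigr)+fu_t=0
\]
with homogeneous Dirichlet data, $f$ a multiple of $k(x)v_t$, and then applies it in the fixed-point operator $\cT\colon\cW\to X$, $v\mapsto u$, set up as in \eqref{alphafg_W}. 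The self-mapping and contraction properties then follow from the variable-coefficient versions of \eqref{enest1} and \eqref{estLinftyW2} together with the smallness condition \eqref{smallnessMbar}, exactly as in the proof of Theorem~\ref{th:W2locex}.

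The only thing to check is that the Galerkin construction, the two energy estimates, and the weak-limit passage of Proposition~\ref{prop:W2lin} survive the replacement of constants by $L_\infty$ coefficients, which is routine. The coefficient $\tfrac1{\lambda(x)}(1-2k(x)v)$ lies in $L_\infty(\Omega)$ and is bounded below by the same $L_\infty$-argument as before (see below), hence it defines a weighted $L_2(\Omega)$ inner product equivalent to the standard one, so the basis of Step~1(a) carries over; $0<\underline{\varrho}\leq\varrho(x)$ with $\varrho\in L_\infty(\Omega)$ makes $w\mapsto\int_\Omega\tfrac1{\varrho}|\nabla w|^2\,dx$ coercive and continuous on $H_0^1(\Omega)$, taking over the role of $c^2|\nabla\cdot|_{L_2}^2$; and the damping keeps its structure, its coercive part being bounded below via $\underline{b},\underline{\delta}$ on $\Omega_{nl}=\{k\neq0\}$, which is precisely the set on which the $L_\infty$-estimate \eqref{estLinftyW2} is needed. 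In the higher-order estimate one tests with $u_{tt}$ as in Step~4, and the integration by parts in time of the elliptic term now produces $\int_\Omega\tfrac1{\varrho}\nabla u\nabla u_t$, handled by the same Young's-inequality step since $\tfrac1{\varrho}\in L_\infty(\Omega)$. Importantly, no gradient of a coefficient ever enters, because $\tfrac1{\varrho}$ multiplies $\nabla u$ under a divergence rather than $u$, and $\tfrac1{\lambda}$ appears only through the time derivative of $\tfrac1{\lambda}(1-2kv)$, controlled exactly as the term $\alpha_t$ was.

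Granting this, the conclusion follows as in Theorem~\ref{th:W2locex}: for $v\in\cW$, \eqref{smallnessMbar} together with \eqref{estLinftyW2} applied on $\Omega_{nl}$ (where alone $k\neq0$) yields $|2k(x)v(t,x)|\leq\underline{\alpha}=\overline{\alpha}<1$ there, so $\tfrac1{\lambda}(1-2kv)$ is uniformly positive and bounded; the higher energy estimate then makes $\cT$ map a small ball $\cW$ into itself once $\bar m$ is chosen small; and subtracting the equations for $u^1=\cT v^1$ and $u^2=\cT v^2$, testing the difference with $\hat u_t$, and discarding the nonnegative monotone-damping contribution as in \eqref{wsig_nonneg1}, gives contractivity of $\cT$ in the norm $|||\cdot|||$. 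Banach's fixed-point theorem then delivers existence and uniqueness of $u\in\cW$.

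I expect the main obstacle to be bookkeeping rather than a new idea: tracking how the constants in \eqref{enest1} and in the contraction estimate come to depend on $\underline{\lambda},\overline{\lambda},\underline{\varrho},\overline{\varrho},\underline{b},\underline{\delta}$, and the one genuine point that the damping is nondegenerate only on $\Omega_{nl}$, so its coercivity there must suffice both for the $L_\infty$-control of $1-2kv$ and for closing the self-mapping and contraction inequalities. If one wants to allow $b$ or $\delta$ to vanish off $\Omega_{nl}$ while retaining the full $X$-regularity built into $\cW$, the $L_{\qq+1}$-bound on $\nabla u_t$ in \eqref{defcW} must be read over $\Omega_{nl}$; otherwise one keeps $b,\delta$ bounded below on all of $\Omega$ and nothing at all changes relative to Theorem~\ref{th:W2locex}.
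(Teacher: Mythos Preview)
Your proposal is correct and follows the same approach the paper takes: the paper's entire proof is the single sentence ``Since \eqref{Wacoustic_viscosity} can be viewed as a version of \eqref{Wpress_viscosity} with spatially varying coefficients, inspection of the proof of Theorem~\ref{th:W2locex} immediately yields'' the corollary, and you have supplied precisely that inspection. Your closing remark about reading the $L_{\qq+1}$-bound over $\Omega_{nl}$ when $b,\delta$ may vanish off $\Omega_{nl}$ is a point the paper leaves implicit but which is indeed the one place where care is needed.
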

\begin{remark}\label{rem:interfacecond}
Consider the case that the coefficients $\lambda, k, \varrho, b, \delta$ are piecewise constant, for simplicity of exposition, only on two open subdomains $\Omega_I$, $\Omega_{II}$ with $\Omega_I\cap\Omega_{II}=\emptyset$, $\Omega_I\cup\Omega_{II}=\Omega$, with a smooth interface $\Gamma$ defined by $\overline{\Gamma}=\ol{\Omega}_I\cap\ol{\Omega}_{II}$.
Due to the fact that (cf., e.g., \cite{BrezziFortin}, \cite{RaviartThomas})
$$
H^1(\Omega)= \{v\in L^2(\Omega)\, : \, v\vert_{\Omega_i}=v_i\in H^1(\Omega_i)
\,, \, i=I,II\,, \ \mbox{tr}_\Gamma v_I=\mbox{tr}_\Gamma v_{II}\}\,,
$$
multiplying the PDE \eqref{Wacoustic_viscosity} with $v\in H^1(\Omega)$ and integrating by parts separately on each subdomain (cf. equations (2.3), (2.5) in \cite{BambergerGlowinskiTran}),  we arrive at the interface conditions
\begin{eqnarray*}
&&\mbox{tr}_\Gamma (u\vert_{\Omega_I})=\mbox{tr}_\Gamma (u\vert_{\Omega_{II}})\\
&&\mbox{ i.e., continuity of pressure in a trace sense}\\
&&\int_\Gamma \Bigl\{\mathbf{v}_u\vert_{\Omega_I}-\mathbf{v}_u\vert_{\Omega_{II}}
\Bigr\} \nu \, v \,ds=0 \quad \forall v\in H^1(\Omega)\\
&& \mbox{where }\mathbf{v}_u:= \frac{1}{\varrho} \nabla u
+ b\,\Bigl((1-\delta) +\delta|\nabla u_t|^{\qq-1}\Bigr)\nabla u_t\,,\\
&&\mbox{ i.e., continuity of (modified) normal velocity in a variational sense.}
\end{eqnarray*}
Here the notation for velocity is motivated by the linearized Euler equation
$\varrho\mathbf{v}_t=-\nabla p$ relating the velocity $\mathbf{v}$ with the pressure $p$, which is denoted by $u$ here.
\end{remark}

\section{Linear elasticity -- nonlinear Westervelt equation coupling via nonlinear strong damping \eqref{Welastic_viscosity}}
\label{secWelastic_viscosity}

In this section we consider the coupled nonlinear acoustic -- elastic system
\begin{equation}\label{Welastic_viscosity1}
\begin{cases}
\varrho(x) \bu_{tt}-\cB^T\frac{1}{1-2\tilde{k}(x)\psi_t} [c](x)\cB\bu\\
\qquad \qquad+\cB^T\Bigl(((1-\delta(x)) +\delta(x)|\cB \bu_t|^{\qq-1})[b](x)\cB \bu_t\Bigr)
=0,
\\
(\bu,\bu_t)|_{t=0}=(u_0, u_1),
\\
\bu|_{\partial \Omega} =0
,\\
-\Delta \psi=-\text{div}\, \bu
\psi|_{\partial \Omega} =0.
\end{cases}
\end{equation}
We again assume that $\Omega\subseteq\R^d$ is an open bounded set with Lipschitz boundary and
impose the following conditions on the coefficients in \eqref{Welastic_viscosity1}
\begin{equation}\label{condcoeffelast}
\begin{cases}
\tilde{k}, \varrho, [b], [c], \delta \in L_\infty(\Omega), ~0\leq\delta\leq 1,\\
\hfill \mbox{ with }[b(x)], [c(x)]\mbox{ symmetric positive semidefinite matrices},\\
\exists \ul{\varrho},\ol{\varrho},\ul{c},\ol{c},\ul{\gamma}>0: \
\quad \ul{\varrho}\leq \varrho(x)\leq \ol{\varrho}, \\
\qquad \qquad \qquad \qquad \quad \ul{c}\leq \lambda_{\min}([c(x)])\leq \lambda_{\max}([c(x)])\leq \ol{c},\\
\qquad \qquad \qquad \qquad \quad \ul{\gamma}\leq (1-\delta(x))\lambda_{\min}([b(x)]),\quad\hfill \mbox{ in }\Omega, \\
\exists \ul{\delta},\ul{b},\ol{b}>0\ \exists \ol{\delta}<1: \
\ul{b}\leq \lambda_{\min}([b(x)])\leq \lambda_{\max}([b(x)])\leq\ol{b},\\
\qquad \qquad \qquad \qquad \quad \ul{\delta}\leq\delta(x)\leq\ol{\delta}, \quad \hfill \mbox{ in }\Omega_{nl},\\
 \qquad\qquad\mbox{where } \Omega_{nl}:=\{x\in\Omega\, : \, \tilde{k}(x)\not=0\}\mbox{ is an open domain and}\\
 \qquad\qquad \mbox{$\overline{\Omega_{nl}}$ is a compact subset of $\Omega$ or $\Omega$ has $C^2$ boundary}\,.
\end{cases}
\end{equation}
In particular, strong linear damping is required in the whole domain, whereas strong nonlinear damping is only needed in the region $\Omega_{nl}$ of nonlinear acoustics.
\\
Similarly to Theorem \ref{th:W3locex} we obtain the following local existence result
\begin{theorem}\label{th:W5locex}
Let $\qq\geq3$ and
assumption \eqref{condcoeffelast}
be satisfied.\\
There exist $\kappa>0$, $\bar{m}>0$, $\bar{a}>0$, $\bar{M}>0$, $T>0$ such that for all $\bu_0,\bu_1\in (H_0^1(\Omega))^d$, $\bu_1\vert_{\Omega_{nl}}\in (W^{1,\qq+1}(\Omega_{nl}))^d$ with
\[
|\bu_1|_{L_2(\Omega)}^2 +|\cB \bu_0|_{L_2(\Omega)}^2
+ |\cB \bu_1|_{L_2(\Omega)}^2
+ |\cB \bu_1|_{L_{\qq+1}(\Omega_{nl})}^{\qq+1}
\leq \kappa^2
\]
we have existence of a weak solution $\bu\in \cW\subseteq H^2(0,T;(L_2(\Omega))^d)\cap C^1(0,T;(H_0^1(\Omega))^d)$ of \eqref{Welastic_viscosity1} where
\begin{equation}\label{defcW_W5}
\begin{aligned}
\cW =\{\bv\in X
:~~& \|\bv_{tt}\|_{L_2(0,T;L_2(\Omega))}\leq \bar{m}\\
& \wedge \|\cB \bv_t\|_{C(0,T;L_2(\Omega))}\leq \bar{a}\\
& \wedge \|\cB \bv_t\|_{C(0,T;L_{q+1}(\Omega_{nl}))}\leq \bar{M}\}\,.
\end{aligned}
\end{equation}
\end{theorem}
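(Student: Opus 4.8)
The plan is to mirror the proof of Theorem~\ref{th:W3locex}: first establish existence and an energy estimate for the \emph{linear} elastic problem obtained by freezing the coefficient $\alpha=\tfrac{1}{1-2\tilde k\psi_t}$ in front of the elliptic term, and then obtain a weak solution of \eqref{Welastic_viscosity1} as a fixed point, on $\cW$, of the associated solution operator. The weak formulation used is the one obtained by pairing with $\bv\in(H_0^1(\Omega))^d$ and using that $\cB^{T}$ is, up to sign, the formal adjoint of $\cB$, namely
\[
\int_\Omega\Bigl(\varrho\,\bu_{tt}\cdot\bv+\alpha\,[c]\cB\bu\cdot\cB\bv+\bigl((1-\delta)+\delta|\cB\bu_t|^{\qq-1}\bigr)[b]\cB\bu_t\cdot\cB\bv\Bigr)\,dx=0 .
\]

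\emph{Step 1 (linear elastic problem).} For data $\bu_0,\bu_1\in(H_0^1(\Omega))^d$ with $\cB\bu_1\in L_{\qq+1}(\Omega_{nl})$ and a coefficient $\alpha$ with $\alpha\equiv1$ on $\Omega\setminus\Omega_{nl}$, $\alpha(t,x)\ge\ul\alpha>0$, $\alpha\in C(0,T;L_\infty(\Omega))$, and $\alpha_t$ small in the relevant Bochner norms (as in Proposition~\ref{prop:Wpot_viscosity_lin}(ii)), I would prove existence of
\[
\bu\in X:=H^2(0,T;(L_2(\Omega))^d)\cap C^1(0,T;(H_0^1(\Omega))^d),\qquad \cB\bu_t\in C(0,T;L_{\qq+1}(\Omega_{nl})),
\]
solving the above weak equation together with the boundary and initial conditions of \eqref{Welastic_viscosity1}, by the scheme behind Proposition~\ref{prop:Wpot_viscosity_lin}: Galerkin discretization in $(H_0^1(\Omega))^d$, energy estimates obtained by testing the discretized equation with $\bu_{n,t}$ and with $\bu_{n,tt}$, and weak limits. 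Coercivity of the elastic form uses the uniform positive definiteness of $[c]$ in \eqref{condcoeffelast} together with Korn's inequality $\|\cB\bv\|_{L_2(\Omega)}\ge c_K\|\nabla\bv\|_{L_2(\Omega)}$ on $(H_0^1(\Omega))^d$; the linear strong damping ($\ul\gamma\le(1-\delta)\lambda_{\min}([b])$ on all of $\Omega$) gives an $L_2(\Omega)$ dissipation term and the nonlinear strong damping, localized to $\Omega_{nl}$, a nonnegative $L_{\qq+1}(\Omega_{nl})$ dissipation term. A point worth noting is that, since $\alpha$ enters the weak form undifferentiated, only $\alpha_t$ — never $\nabla\alpha$ — appears on the right-hand side (the spatial-derivative-of-$\alpha$ term present in \eqref{Wpot_visc_lin_enid0_iii} came from integrating $\alpha\Delta\psi$ by parts, whereas here $\alpha$ stays between $\cB^T$ and $[c]\cB$); and, exactly as in Proposition~\ref{prop:Wpot_viscosity_lin}(ii), the hypothesis $\qq\ge3$ is used so that the $L_4(\Omega_{nl})$-norm terms produced by the sign of the potential-energy contribution and by the $\alpha_t$-cross terms can be absorbed into the $L_{\qq+1}(\Omega_{nl})$ dissipation via Young's inequality \eqref{abeps}. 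The resulting estimate has the shape of \eqref{enest_Wpot_viscosity_lin2} with $\nabla$ replaced by $\cB$ and $\Omega$ by $\Omega_{nl}$ in the nonlinear terms.

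\emph{Step 2 (fixed point).} Define $\cT:\cW\to X$, $\bv\mapsto\cT\bv=\bu$, where $\bu$ solves the linear problem of Step~1 with $\alpha=\tfrac{1}{1-2\tilde k\psi^{\bv}_t}$ and $\psi^{\bv}\in H_0^1(\Omega)$ is the unique solution of $-\Delta\psi^{\bv}=-\text{div}\,\bv$; once a fixed point $\bu=\bv$ is found, $\psi:=\psi^{\bu}$ satisfies the last line of \eqref{Welastic_viscosity1}. The crucial point is an $L_\infty(\Omega_{nl})$ bound on $\psi^{\bv}_t$: differentiating the elliptic equation in time, Helmholtz orthogonality, Poincar\'e's and Korn's inequalities give $\|\psi^{\bv}_t\|_{H^1(\Omega)}+\|\Delta\psi^{\bv}_t\|_{L_2(\Omega)}\le C\|\cB\bv_t\|_{L_2(\Omega)}\le C\bar a$, so that by interior elliptic regularity — available because $\overline{\Omega_{nl}}$ is compactly contained in $\Omega$ (or by global $H^2$ regularity if $\partial\Omega\in C^2$) — together with $H^2\hookrightarrow L_\infty$ for $d\le3$, one obtains $\|\psi^{\bv}_t\|_{C(0,T;L_\infty(\Omega_{nl}))}\le C\bar a$; similarly $\|\psi^{\bv}_{tt}\|_{L^2(0,T;H^1(\Omega))}\le C\bar m$ controls $\alpha_t=\tfrac{2\tilde k\,\psi^{\bv}_{tt}}{(1-2\tilde k\psi^{\bv}_t)^2}$ in the norms required by Step~1. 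Choosing $\bar a$ small (so that $1-2\tilde k\psi^{\bv}_t\ge\tfrac12$ on $\Omega_{nl}$) verifies the hypotheses of Step~1, so $\cT$ is well-defined; inserting the Step~1 energy estimate into the definition \eqref{defcW_W5} of $\cW$ and shrinking $T$, $\kappa$, $\bar m$, $\bar a$, $\bar M$ as in the computation following \eqref{alpha_Wpotviscosity} yields $\cT(\cW)\subseteq\cW$. Since $\cW$ is a bounded, closed, convex subset of a reflexive space, a solution of \eqref{Welastic_viscosity1} is then extracted from a weakly convergent subsequence of the Picard iterates $\bu^{(n+1)}=\cT\bu^{(n)}$, the limit passage in the coefficient $\tfrac{1}{1-2\tilde k\psi_t}$ and in $|\cB\bu_t|^{\qq-1}[b]\cB\bu_t$ being justified by Aubin--Lions compactness (using $\cB\bu^{(n)}_{tt}\in L^2(0,T;H^{-1}(\Omega))$) and the usual a.e.-convergence/monotonicity argument.

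The main obstacle is precisely this interplay in Step~2 between the elliptic constraint $-\Delta\psi=-\text{div}\,\bu$ and the energy estimates: one must produce an $L_\infty(\Omega_{nl})$ bound on $\psi_t$ — needed to keep $1-2\tilde k\psi_t$ bounded away from $0$ and so avoid degeneracy — out of control on $\cB\bu_t$ that is only global in $L_2(\Omega)$ and local in $L_{\qq+1}(\Omega_{nl})$; this is exactly why the geometric hypothesis on $\Omega_{nl}$ (or the $C^2$-regularity of $\partial\Omega$) is imposed in \eqref{condcoeffelast}, so that interior (respectively boundary) elliptic regularity applies. As in Remark~\ref{rem:Wpot_viscosity_uniqueness}, I would not expect contractivity — hence uniqueness — to hold: subtracting two solutions produces a term of the form $\psi^{1}_{tt}\,|\cB\hat\bu|^{2}$ that the available norms cannot absorb, which is also why the statement asserts existence only.
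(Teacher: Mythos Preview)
Your proposal is correct and follows essentially the same route as the paper: a fixed-point argument on $\cW$ with the frozen-coefficient linear elastic problem, $L_\infty(\Omega_{nl})$ control of $\psi_t$ via the $H^2(\Omega_{nl})$ elliptic regularity estimate for $(-\Delta)^{-1}\mathrm{div}$ (this is exactly where the geometric hypothesis on $\Omega_{nl}$ in \eqref{condcoeffelast} is used), and energy estimates obtained from the multipliers $\bu_t$ and $\bu_{tt}$. Two small remarks: the paper carries out the energy step in one shot by testing with $\bu_t+\varepsilon\bu_{tt}$ rather than separately, and your observation that here only $\alpha_t$ (never $\nabla\alpha$) appears on the right-hand side---because $\alpha$ sits between $\cB^T$ and $[c]\cB$ and is not integrated by parts---is precisely what the paper's computation shows and is the reason the estimate is cleaner than \eqref{enest_Wpot_viscosity_lin2}.
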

\begin{proof}
Recall that $\psi$ is related to $\bu$ via $-\Delta \psi=-\text{div}\,\bu$ and that we impose homogeneous Dirichlet boundary conditions on $\psi$.
Like in the proof of Theorem \ref{th:W3locex} we can use a fixed point argument with an operator $\cT$ mapping $\bv$ to a weak solution of
\begin{equation}\label{Welastic_viscosity_fp}
\begin{cases}
\varrho(x) \bu_{tt}-\cB^T\alpha(t,x) [c](x)\cB\bu\\
\qquad \qquad +\cB^T\Bigl(((1-\delta(x)) +\delta(x)|\cB \bu_t|^{\qq-1})[b](x)\cB \bu_t\Bigr)
=0,
\\
(\bu,\bu_t)|_{t=0}=(u_0, u_1),
\\
\bu|_{\partial \Omega} =0,
\\
\end{cases}
\end{equation}
where
\[
\alpha(t,x)=\frac{1}{1+2\tilde{k}(x)(-\Delta)^{-1}\text{div}\,\bv_t(t,x)}\,,
\]
$-\Delta$ denotes the Laplace operator on $\Omega$ with homogeneous Dirichlet boundary conditions on $\partial \Omega$, and
$C_{\Delta,\Omega_{nl}}$ is the constant in the regularity estimate
\[
\|(-\Delta)^{-1} f\|_{H^2(\Omega_{nl})}\leq C_{\Delta,\Omega_{nl}} \|f\|_{L_2(\Omega)}\,.
\]
Existence of a weak solution for fixed $\bv\in\cW$ can be shown like in Proposition \ref{prop:Wpot_viscosity_lin}.
In particular by choosing $\bar{a}$ sufficiently small so that
\[
2\|\tilde{k}\|_{L_\infty(\Omega_{nl})} C_{\Delta,\Omega_{nl}} C_{H^2(\Omega_{nl}),L_\infty(\Omega_{nl})}\bar{a}
< 1
\]
we have that
\[
\begin{aligned}
\alpha(t,x)
\geq&\frac{1}{1+ 2\|\tilde{k}\|_{L_\infty(\Omega_{nl})} C_{\Delta,\Omega_{nl}} C_{H^2(\Omega_{nl}),L_\infty(\Omega_{nl})}\bar{a}}=:\ul{\alpha}>0\\
\alpha(t,x)
\leq&\frac{1}{1- 2\|\tilde{k}\|_{L_\infty(\Omega_{nl})} C_{\Delta,\Omega_{nl}} C_{H^2(\Omega_{nl}),L_\infty(\Omega_{nl})}\bar{a}}=:\ol{\alpha}<\infty\\
|\alpha_t(t,x)|=&
\left|\frac{2\tilde{k}(x)}{(1+2\tilde{k}(x)(-\Delta)^{-1}\text{div}\,\bv_t(t,x))^2}
(-\Delta)^{-1}\text{div}\,\bv_{tt}(t,x)\right|\\
\leq& \frac{2\|\tilde{k}\|_{L_\infty(\Omega_{nl})}|(-\Delta)^{-1}\text{div}\,\bv_{tt}(t,x)|
}{1- 2\|\tilde{k}\|_{L_\infty(\Omega_{nl})} C_{\Delta,\Omega_{nl}} C_{H^2(\Omega_{nl}),L_\infty(\Omega_{nl})}\bar{a}} \quad \mbox{ on }\Omega_{nl}\\
\alpha_t(t,x)=&\ 0 \mbox{ on }\Omega\setminus\Omega_{nl}\\
\|\alpha_t\|_{L^p(0,T;L_2(\Omega))}\leq&
\frac{2\|\tilde{k}\|_{L_\infty(\Omega_{nl})} \|\nabla (-\Delta)^{-1}\text{div}\,\bv_{tt}\|_{L^p(0,T;L_2(\Omega))}
}{1-2\|\tilde{k}\|_{L_\infty(\Omega_{nl})} C_{\Delta,\Omega_{nl}} C_{H^2(\Omega_{nl}),L_\infty(\Omega_{nl})}\bar{a}} \\
\leq&
\frac{2\|\tilde{k}\|_{L_\infty(\Omega_{nl})} T^{\frac{2-p}{2p}}\bar{m}
}{1-2\|\tilde{k}\|_{L_\infty(\Omega_{nl})} C_{\Delta,\Omega_{nl}} C_{H^2(\Omega_{nl}),L_\infty(\Omega_{nl})}\bar{a}} \\
=:&\,  \ol{\ol{\alpha}} T^{\frac{2-p}{2p}} \bar{m}\,, \quad p\in[1,2]
\end{aligned}
\]
for $\bv\in\cW$, provided $\bar{a}$ is chosen sufficiently small.
By multiplying \eqref{Welastic_viscosity_fp} with
$\bu_t+\varepsilon\bu_{tt}$
we obtain
\begin{align*}
&\tfrac12\left[|\sqrt{\varrho}\bu_t|_{L_2(\Omega)}^2\right]_0^t
+\tfrac12\left[|\sqrt{\alpha [c]}\cB\bu|_{L_2(\Omega)}^2 \right]_0^t +\varepsilon\int_0^t|\sqrt{\varrho}\bu_{tt}|_{L_2(\Omega)}^2\, ds\\
&\quad+\int_0^t \Bigl(|\sqrt{(1-\delta)[b]}\cB\bu_t|_{L_2(\Omega)}^2
+|\sqrt{\delta[b]}\cB\bu_t|_{L_{\qq+1}(\Omega_{nl})}^{\qq+1}\Bigr)\, ds\\
&\quad
+\varepsilon\left[\tfrac12|\sqrt{(1-\delta)[b]}\cB\bu_t|_{L_2(\Omega)}^2
+\tfrac{1}{\qq+1}|\sqrt[\qq+1]{\delta[b]}\cB\bu_t|_{L_{\qq+1}(\Omega_{nl})}^{\qq+1}\right]_0^t\\
&=
\tfrac12\int_0^t\int_{\Omega_{nl}}\Bigl(\alpha_t[c]\cB\bu\Bigr)^T\cB\bu\, dx\, ds\\
&\quad+\varepsilon\int_0^t |\sqrt{\alpha [c]}\cB\bu_t(s)|_{L_2(\Omega)}^2\, ds
+\varepsilon\int_0^t\int_{\Omega_{nl}}\Bigl(\alpha_t[c]\cB\bu\Bigr)^T\cB\bu_t\, dx\, .ds\\
&\quad-\varepsilon\left[ \int_\Omega \Bigl(\alpha[c]\cB\bu\Bigr)^T\cB\bu_t\, dx\right]_0^t.
\end{align*}
We estimate the right-hand side and arrive at
\begin{align*}
&\tfrac12\left[|\sqrt{\varrho}\bu_t|_{L_2(\Omega)}^2\right]_0^t
+\tfrac12\left[|\sqrt{\alpha [c]}\cB\bu|_{L_2(\Omega)}^2 \right]_0^t+\varepsilon\int_0^t|\sqrt{\varrho}\bu_{tt}|_{L_2(\Omega)}^2\, ds \\
&\quad+\int_0^t \Bigl(|\sqrt{(1-\delta)[b]}\cB\bu_t|_{L_2(\Omega)}^2
+|\sqrt{\delta[b]}\cB\bu_t|_{L_{\qq+1}(\Omega_{nl})}^{\qq+1}\Bigr)\, ds\\
&\quad
+\epsilon\left[\tfrac12|\sqrt{(1-\delta)[b]}\cB\bu_t|_{L_2(\Omega)}^2
+\tfrac{1}{\qq+1}|\sqrt[\qq+1]{\delta[b]}\cB\bu_t|_{L_{\qq+1}(\Omega_{nl})}^{\qq+1}\right]_0^t\\
&\leq
\tfrac12\ol{\ol{\alpha}} t^{\frac12} \bar{m}\ol{c}\int_0^t |\cB\bu(s)|_{L_4(\Omega_{nl})}^2 ds\\
&\quad+\varepsilon\ol{\alpha}\ol{c}\int_0^t |\cB\bu_t(s)|_{L_2(\Omega)}^2\, ds
+\varepsilon \ol{\ol{\alpha}} \bar{m}\ol{c}\int_0^t |\cB\bu(s)|_{L_4(\Omega_{nl})} |\cB\bu_t(s)|_{L_4(\Omega_{nl})} ds\\
&\quad+\varepsilon\ol{\alpha}\ol{c}\Bigl(|\cB\bu(t)|_{L_2(\Omega)} |\cB\bu_t(t)|_{L_2(\Omega)}
+|\cB\bu_0|_{L_2(\Omega)} |\cB\bu_1|_{L_2(\Omega)} \Bigr)\\
&\leq
\ol{\ol{\alpha}}\bar{m}\ol{c} \Bigl(t^{\frac32}|\cB\bu_0|_{L_4(\Omega_{nl})}+t^3\|\cB\bu_t\|_{L_4(0,T;L_4(\Omega_{nl}))}^2\Bigr)\\
&\quad+\varepsilon\ol{\alpha}\ol{c}\int_0^t |\cB\bu_t(s)|_{L_2(\Omega)}^2\, ds
+\varepsilon \ol{\ol{\alpha}}\bar{m}\ol{c}
\Bigl( \tfrac12 |\cB\bu_0|_{L_4(\Omega_{nl})}+\tfrac32 t^{\frac32}\|\cB\bu_t\|_{L_4(0,T;L_4(\Omega_{nl}))}^2\Bigr)\\
&\quad+\varepsilon\tfrac{\gamma}{4} |\cB\bu_t(t)|_{L_2(\Omega)}^2
+\varepsilon\tfrac{(\ol{\alpha}\ol{c})^2}{\gamma} |\cB\bu(t)|_{L_2(\Omega)}^2
+\varepsilon\ol{\alpha}\ol{c}|\cB\bu_0|_{L_2(\Omega)} |\cB\bu_1|_{L_2(\Omega)},
\end{align*}
thus by \eqref{condcoeffelast} using \eqref{abeps}, \eqref{Cepsr} in
\begin{align*}
&\ol{\ol{\alpha}}\bar{m}\ol{c} (t^3+\varepsilon \tfrac32 t^{\frac32}) \|\cB\bu_t\|_{L_4(0,T;L_4(\Omega_{nl}))}^2\\
&\leq \tfrac{\ul{\delta}\ul{b}}{4} \|\cB\bu_t\|_{L_{\qq+1}(0,T;L_{\qq+1}(\Omega_{nl}))}^{\qq+1}
+C(\tfrac{\ul{\delta}\ul{b}}{4},\tfrac{\qq+1}{2})
\Bigl(\ol{\ol{\alpha}}\bar{m}\ol{c} (t^3+\varepsilon \tfrac32 t^{\frac32}) t^{\frac{\qq-3}{2(\qq+1)}}({C_{L_{\qq+1},L_4}^\Omega})^2\Bigr)^{\frac{\qq+1}{\qq-1}}
\end{align*}
we get
\begin{equation}\label{enest_Welastic_viscosity}
\begin{aligned}
&\tfrac14\ul{\varrho}\|\bu_t\|_{C(0,T;L_2(\Omega))}^2
+(\tfrac14\ul{\alpha}\ul{c}-\tfrac{(\varepsilon\ol{\alpha}\ol{c})^2}{\ul{\gamma}})\|\cB\bu\|_{C(0,T;L_2(\Omega))}^2  +\tfrac{\varepsilon}{2}\ul{\varrho}\|\bu_{tt}|_{L_2(0,T;L_2(\Omega))}^2\\
&\quad+(\tfrac12\ul{\gamma}-\varepsilon\ol{\alpha}\ol{c})\|\cB\bu_t\|_{L^2(0,T;L_2(\Omega))}^2
+\tfrac{\ul{\delta}\ul{b}}{4}\|\cB\bu_t\|_{L_{\qq+1}(0,T;L_{\qq+1}(\Omega_{nl}))}^{\qq+1}\\
&\quad
+\tfrac{\varepsilon}{2}\Bigl(\tfrac12\ul{\gamma}\|\cB\bu_t\|_{C(0,T;L_2(\Omega))}^2
+\tfrac{1}{\qq+1}\ul{\delta}\ul{b}\|\cB\bu_t\|_{C(0,T;L_{\qq+1}(\Omega_{nl}))}^{\qq+1}\Bigr)\\
&\leq\varepsilon\ol{\alpha}\ol{c}|\cB\bu_0|_{L_2(\Omega)} |\cB\bu_1|_{L_2(\Omega)}+\varepsilon\ol{\alpha}\ol{c}|\cB\bu_0|_{L_2(\Omega)} |\cB\bu_1|_{L_2(\Omega)}  \\
&\quad +\ol{\varrho} |\bu_1|_{L_2(\Omega)}^2 +\ol{\alpha} |\cB\bu_0|_{L_2(\Omega)}^2
+\varepsilon\Bigl(\ol{b}|\cB\bu_1|_{L_2(\Omega)}^2
+\tfrac{2}{\qq+1}\ol{b}|\cB\bu_1|_{L_{\qq+1}(\Omega_{nl})}^{\qq+1}\Bigr)\\
&\quad+C(\tfrac{\ul{\delta}\ul{b}}{4},\tfrac{\qq+1}{2})
\Bigl(\ol{\ol{\alpha}}\bar{m}\ol{c} (t^3+\varepsilon \tfrac32 t^{\frac32}) t^{\frac{\qq-3}{2(\qq+1)}}({C_{L_{\qq+1},L_4}^\Omega})^2\Bigr)^{\frac{\qq+1}{\qq-1}}
\end{aligned}
\end{equation}
Thus by choosing $\varepsilon,\kappa$ sufficiently small, we obtain $\bu\in\cW$ .
\end{proof}
Due to the appearance of positive powers of $t$ in the energy estimate \eqref{enest_Welastic_viscosity}, also here only a local in time well-posedness result can be expected. Morever, for similar reasons as in Section \ref{secWpot_viscosity} uniqueness is not likely to hold here, see Remark \ref{rem:Wpot_viscosity_uniqueness}.

\begin{remark}\label{rem:interfacecond_ae}
Again we can derive an interface condition in the case that the coefficients $\lambda,  \mu, k, \varrho, b, \delta$ are piecewise
constant, i.e., in a setting similar to the one of Remark \ref{rem:interfacecond}. This yields
\begin{eqnarray*}
&&\mbox{tr}_\Gamma (\bu\vert_{\Omega_I})=\mbox{tr}_\Gamma (\bu\vert_{\Omega_{II}})\\
&&\mbox{ i.e., continuity of velocity in a trace sense}\\
&&\int_\Gamma \Bigl\{\sigma_u\vert_{\Omega_I}-\sigma_u\vert_{\Omega_{II}}
\Bigr\} \mathbf{\nu} \, \mathbf{v} \,ds=0 \quad \forall \mathbf{v}\in (H^1(\Omega))^d\\
&& \mbox{where }\sigma_u:= [c] \cB \bu
+ ((1-\delta(x)) +\delta(x)|\cB \bu_t|^{\qq-1})[b](x)\cB \bu_t\,,\\
&&\mbox{ i.e., continuity of (modified) normal stresses in a variational sense.}
\end{eqnarray*}
\end{remark}

\section{The Westervelt equation in acoustic pressure formulation with $p$-Laplace damping \eqref{Wpress_pLaplace}}
\label{secWpress_pLaplace}
Similarly to Section \ref{secWpress_viscosity}, we start with a result on the equation
\begin{equation}\label{W1lin}
\begin{cases}
(1+\alpha) u_{tt}-c^2\,\text{div}\Bigl(\nabla u+\vep|\nabla u|^{\pp-1}\nabla u\Bigr)-b\Delta u_t+fu_t=g
\\
(u,u_t)|_{t=0}=(u_0, u_1)
\\
u|_{\partial \Omega} =0
.\\
\end{cases}
\end{equation}

\begin{proposition} \label{prop:W1lin}
Let $T>0$, $c^2,\vep>0$, $b\geq0$ and assume that
\begin{itemize}
\item
$\alpha\in C(0,T;L_\infty(\Omega))$, $\alpha_t\in L_\infty(0,T;L_2(\Omega))$,
$-1<-\underline{\alpha}\leq \alpha(t,x)\leq \overline{\alpha}$,
\item
$f\in L_\infty(0,T;L_{2}(\Omega))$,
\item
$g\in L_2(0,T;L_{4/3}(\Omega))$,
\item
$u_0\in H_0^1(\Omega)$, $u_1\in L_2(\Omega)$.
\end{itemize}
with
$$\|f-\frac12\alpha_t\|_{L_\infty(0,T;L_{2}(\Omega))}\leq\bar{b}
<\frac{b}{(C_{H_0^1,L_4}^\Omega)^2}\,. $$
Then \eqref{W1lin} has a weak solution
\begin{align*}
u\in\tilde{X} := ~&C^1(0,T;L_2(\Omega))\, \cap \,  C(0,T;H_0^1(\Omega)) \\
&\cap C(0,T;W_0^{1,p+1}(\Omega))\,  \cap \, H^1(0,T;H_0^1(\Omega))
\end{align*}
which is unique in $\tilde{X}$ and satisfies the energy estimate
\begin{equation}
\label{W1estimate}
\begin{aligned}
&\left[\frac{1}{2}\int_\Omega (1+\alpha)(u_{t})^2\, dx
+ \frac{c^2}{2} |\nabla u|_{L_2(\Omega)}^2+ \frac{c^2 \vep}{p+1} |\nabla u|_{L_{p+1}(\Omega)}^{p+1} \right]_0^t\\
&\qquad+ \hat{b} \int_0^t |\nabla u_{t}|_{L_2(\Omega)}^2 \, ds \leq \frac{1}{4\check{b}} \|g\|_{L_2(0,T;L_{4/3}(\Omega))}^2.
\end{aligned}
\end{equation}
where $\hat{b}=b-(\bar{b}+\check{b})(C_{H^1,L^4}^\Omega)^2$ and $\check{b}= \frac{b}{(C_{H^1,L^4}^\Omega)^2}- \bar{b}$.\\
In particular, for the energy
\begin{equation}
\label{energy:W1}
\mathcal{E}[u](t):= \left[ |u_t|_{L_2(\Omega)}^2 + |\nabla u|_{L_2(\Omega)}^2 + |\nabla u |_{L_{p+1}(\Omega)}^{p+1}\right](t)
\end{equation}
we have
\begin{equation}\label{enest:W1}
\mathcal{E}[u](t) + c_1 \int_0^t |\nabla u_t|_{L_2(\Omega)}^2 \, ds \leq C_2 \left(\mathcal{E}[u](0) + \|g\|_{L_2(0,T;L_{4/3}(\Omega))}\right)
\end{equation}
for some constants $c_1$, $C_2$ only depending on $\alpha$, $c$, $b$, $\varepsilon$, $p$ and $C_{H^1,L^4}^\Omega$.
\end{proposition}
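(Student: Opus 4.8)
\emph{Proof plan.} I would prove Proposition~\ref{prop:W1lin} along exactly the lines of Proposition~\ref{prop:W2lin}: Galerkin approximation, uniform energy estimates, weak limits, and a separate energy argument for uniqueness. The one structural difference is that here the $p$-Laplacian acts on $u$ rather than on $u_t$, so upon testing with $u_{n,t}$ it contributes to the \emph{potential} energy through $\int_\Omega|\nabla u|^{p-1}\nabla u\cdot\nabla u_t=\tfrac1{p+1}\tfrac{d}{dt}|\nabla u|_{L_{p+1}(\Omega)}^{p+1}$ instead of to the dissipation, which is supplied solely by the linear term $-b\Delta u_t$ (together with the sign condition on $f-\tfrac12\alpha_t$). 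As in Section~\ref{secWpress_viscosity}, I would first regularize, choosing smooth $\alpha_k\to\alpha$, $f_k\to f$, $g_k\to g$ in the relevant topologies with $\|f_k-\tfrac12\alpha_{k,t}\|_{L_\infty(0,T;L_2(\Omega))}\le\bar b$ and $-1<-\underline\alpha\le\alpha_k\le\overline\alpha$, solve \eqref{W1lin} with $(\alpha_k,f_k,g_k)$ by a Galerkin method in a basis that is smooth, orthonormal in a time-averaged weighted $L_2^{\tilde\alpha_k}(\Omega)$ and a basis of $H_0^1(\Omega)\cap W_0^{1,p+1}(\Omega)$ (ODE solvability is standard; global solvability on $[0,T]$ follows from the estimate below), and finally pass $k\to\infty$.

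\emph{Energy estimate.} Testing the discretized equation with $u_{n,t}$ and integrating in space, using the identity for $\tfrac{d}{dt}\big((1+\alpha_k)u_{n,t}^2\big)$ and the gradient identity above, gives
\[
\tfrac{d}{dt}\Big[\tfrac12\int_\Omega(1+\alpha_k)u_{n,t}^2+\tfrac{c^2}{2}|\nabla u_n|_{L_2}^2+\tfrac{c^2\vep}{p+1}|\nabla u_n|_{L_{p+1}}^{p+1}\Big]+b|\nabla u_{n,t}|_{L_2}^2=\int_\Omega\big(\tfrac12\alpha_{k,t}-f_k\big)u_{n,t}^2+\int_\Omega g_k u_{n,t}.
\]
On the right-hand side I would use $H_0^1(\Omega)\hookrightarrow L_4(\Omega)$ to bound the first term by $\bar b\,(C_{H_0^1,L_4}^\Omega)^2|\nabla u_{n,t}|_{L_2}^2$, and H\"older together with Young's inequality \eqref{abeps} to bound the second by $\check b\,(C_{H_0^1,L_4}^\Omega)^2|\nabla u_{n,t}|_{L_2}^2+\tfrac1{4\check b}\|g_k\|_{L_{4/3}}^2$, with $\check b>0$ chosen so small that $\hat b:=b-(\bar b+\check b)(C_{H_0^1,L_4}^\Omega)^2>0$ (possible since $\bar b<b/(C_{H_0^1,L_4}^\Omega)^2$). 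Absorbing the gradient terms and integrating in time yields \eqref{W1estimate} for the regularized data; since $1+\alpha_k\ge1-\underline\alpha>0$, this bounds $u_n$ uniformly in $\tilde X$, and controlling the initial energy gives \eqref{enest:W1}. By weak lower semicontinuity both estimates persist in the limit.

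\emph{Passage to the limit.} From the uniform bound I would extract a subsequence converging to some $u$ weakly/weakly-$*$ in the spaces defining $\tilde X$, together with $|\nabla u_n|^{p-1}\nabla u_n\rightharpoonup\chi$ in $L_{(p+1)/p}((0,T)\times\Omega)$. The linear terms pass to the limit directly; the term $\int f_k u_{n,t}w$ is handled via smoothness of $f_k$ (as in Step~1(c) of the proof of Proposition~\ref{prop:W2lin}), or alternatively by noting that the equation bounds $u_{n,tt}$ uniformly in $L_2(0,T;(W_0^{1,p+1}(\Omega))^*)$, so that Aubin--Lions gives $u_{n,t}\to u_t$ strongly in $L_2(0,T;L_2(\Omega))$. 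I expect the \emph{main obstacle} to be identifying $\chi=|\nabla u|^{p-1}\nabla u$, since neither the energy bound nor Aubin--Lions yields strong convergence of $\nabla u_n$; I would resolve it by a Minty--Browder monotonicity argument. Exploiting monotonicity of $\xi\mapsto|\xi|^{p-1}\xi$, it suffices to establish $\limsup_n\int_0^T\!\int_\Omega|\nabla u_n|^{p+1}\le\int_0^T\!\int_\Omega\chi\cdot\nabla u$, which I would obtain by testing the Galerkin equation with $u_n$ itself, integrating by parts in time, passing to the limit term by term (with weak lower semicontinuity for the nonnegative endpoint contributions), and comparing with the limit equation tested with $u$. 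Letting $k\to\infty$ then proceeds as in Step~2 of the proof of Proposition~\ref{prop:W2lin} (again using the surviving energy estimate and the monotonicity of the $p$-Laplace term), producing the desired weak solution $u\in\tilde X$ of \eqref{W1lin} satisfying \eqref{W1estimate} and \eqref{enest:W1}.

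\emph{Uniqueness.} For two solutions $u^1,u^2\in\tilde X$ and $\hat u=u^1-u^2$ (with zero data), I would test the equation for $\hat u$ with $\hat u_t$. Writing $F(\lambda)=|\lambda|^{p-1}\lambda$ and $M=\int_0^1F'(\nabla u^2+\sigma\nabla\hat u)\,d\sigma$ (symmetric positive semidefinite, as in the computation preceding \eqref{wsig}), one has $|\nabla u^1|^{p-1}\nabla u^1-|\nabla u^2|^{p-1}\nabla u^2=M\nabla\hat u$, so the nonlinear contribution equals $\tfrac{c^2\vep}{2}\tfrac{d}{dt}\int_\Omega M\nabla\hat u\cdot\nabla\hat u-\tfrac{c^2\vep}{2}\int_\Omega M_t\nabla\hat u\cdot\nabla\hat u$: the first is a nonnegative energy term (and, by the standard monotonicity inequality, dominates $|\nabla\hat u|_{L_{p+1}(\Omega)}^{p+1}$), while the remainder is controlled using the $\tilde X$-bounds on $u^1,u^2$, Sobolev embeddings valid for $d\le3$, and the dissipation $\hat b\int_0^t|\nabla\hat u_t|_{L_2}^2$. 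Combining this with $\bar b<b/(C_{H_0^1,L_4}^\Omega)^2$ and Gronwall's inequality (iterated over short time subintervals if necessary) forces $\hat u\equiv0$. Throughout, the main technical burden is the monotonicity/compactness handling of the $p$-Laplace term; the remaining steps parallel Proposition~\ref{prop:W2lin}.
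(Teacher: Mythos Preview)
Your existence argument and the energy estimate match the paper's proof essentially line for line: the paper, too, merely says the proof ``can be done analogously to the one of Proposition~\ref{prop:W2lin}'' and then only writes out the multiplier computation with $w=u_t$, arriving at exactly \eqref{W1estimate}. Your added Minty--Browder step for identifying the weak limit of $|\nabla u_n|^{p-1}\nabla u_n$ is a welcome clarification; the paper (as in Proposition~\ref{prop:W2lin}) simply asserts the corresponding weak convergence without justification, so here you are in fact more careful than the source.

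The one place where I would push back is your uniqueness sketch. You correctly observe that, unlike in Proposition~\ref{prop:W2lin}, testing the difference equation with $\hat u_t$ does \emph{not} make the $p$-Laplace contribution directly nonnegative, and you rewrite it as $\tfrac12\tfrac{d}{dt}\!\int M\nabla\hat u\cdot\nabla\hat u-\tfrac12\!\int M_t\nabla\hat u\cdot\nabla\hat u$. The difficulty is the $M_t$ term: since $M_t$ carries a factor of size $|\nabla(u^2+\sigma\hat u)|^{p-2}\,|\nabla(u^2+\sigma\hat u)_t|$, bounding $\int_\Omega M_t\nabla\hat u\cdot\nabla\hat u$ requires a H\"older splitting with exponent sum $\tfrac{p-2}{p+1}+\tfrac12+\tfrac{2}{p+1}=\tfrac{p}{p+1}+\tfrac12>1$ for $p>1$, and none of the $\tilde X$-bounds ($\nabla u^i\in L_\infty(0,T;L_{p+1})$, $\nabla u^i_t\in L_2(0,T;L_2)$) or the $d\le3$ Sobolev embeddings close this gap. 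Your appeal to ``the dissipation $\hat b\int_0^t|\nabla\hat u_t|_{L_2}^2$'' does not help here, since the problematic term contains $\nabla u^i_t$, not $\nabla\hat u_t$, and is paired with $|\nabla\hat u|^2$ rather than $|\nabla\hat u_t|^2$. The paper, for its part, does not spell out a uniqueness proof either---it only says ``analogously'', which in view of the sign issue you identified is not self-evident---so you are not missing something the paper provides; but as written your control of the $M_t$ remainder is not justified by the stated regularity.
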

\begin{proof}
The proof can be done analogously to the one of Proposition \ref{prop:W2lin} 
by first showing that \eqref{W1linweak} below holds for smooth approximations $\alpha_k$, $f_k$
and $g_k$ of $\alpha$, $f$ and $g$ by means of Galerkin's method, energy estimates and weak limits
and then letting $\alpha_k\to \alpha$, $f_k \to f$ and $g_k \to g$, respectively. Here, we omit details and only
show the key energy estimates. The weak form of \eqref{W1lin} reads as
\begin{equation}
\label{W1linweak}
\begin{aligned}
\int_\Omega \Bigl\{(1+\alpha)u_{tt} w + c^2 \left( \nabla u + \vep |\nabla u|^{p-1} \nabla u\right) \nabla w
+b \nabla u_t \nabla w \Bigr\} \, dx&
\\
=\int_\Omega (g -fu_t) w\, dx\quad \forall w\in W_0^{1,p+1}(\Omega),&
\end{aligned}
\end{equation}
with initial conditions $(u_{0},u_{1})$. \\
Testing \eqref{W1linweak} with $w=u_t(t)$ and integrating with respect to time yields
\begin{eqnarray*}
&\lefteqn{\left[\frac{1}{2}\int_\Omega (1+\alpha)\left(u_{t}\right)^2\, dx
+ \frac{c^2}{2} |\nabla u|_{L_2(\Omega)}^2\right.}\\
&&\left.+ \frac{c^2 \vep}{p+1} |\nabla u|_{L_{p+1}(\Omega)}^{p+1} \right]_0^t  + b \int_0^t |\nabla u_{t}|_{L_2(\Omega)}^2 \, ds\\
&=&
- \int_0^t  \int_\Omega \left(f-\frac12\alpha_{t}\right) (u_{t})^2 \, dx \, ds
+ \int_0^t  \int_\Omega g u_{t} \, dx \, ds\\
&\leq&
(\bar{b}+ \check{b})\int_0^t | u_{t}|_{L_4(\Omega)}^2 \, ds +\frac{1}{4\check{b}} \|g\|_{L_2(0,T;L_{4/3}(\Omega))}^2,
\end{eqnarray*}
hence, by $\hat{b}=b-(\bar{b}+\check{b})(C_{H^1,L^4}^\Omega)^2>0$, we arrive at \eqref{W1estimate},
which directly implies \eqref{enest:W1}.
\end{proof}
\begin{remark}
Note that in the case of $p$-Laplace damping we do not obtain a higher order energy estimate
like in the proof of Proposition \ref{prop:W2lin}. Testing \eqref{W1linweak}
with $w=u_{tt}$ yields
\begin{align*}
\lefteqn{\int_0^t\int_\Omega \Bigl\{(1+\alpha)(u_{tt})^2 \, dx\, ds + \frac{b}{2} |\nabla u_{n,t}|^2 \big|_0^t}\nonumber\\
&=c^2 \int_0^t |\nabla u_{t}|_{L_2(\Omega)}^2 \, ds - c^2\left[ \int_\Omega \nabla u \nabla u_{t} \, dx\right]_0^t \nonumber\\
& \qquad- c^2 \vep \int_0^t \int_\Omega |\nabla u|^{p-1} \nabla u \nabla u_{tt} \, dx \, ds  - \int_0^t \int_\Omega (f u_{t}- g)u_{tt}\, dx \, ds,\nonumber
\end{align*}
but here the first term in the third line cannot be controlled.
\end{remark}

Using Proposition \ref{prop:W1lin} 
we now show local existence of solutions.
\begin{theorem}\label{th:W1locex}
Let $c^2,b,\vep>0$, $k\in\R$, $\pp>d-1$.\\
For any $T>0$ there is a sufficiently small
$\kappa_T>0$ such that for all $u_0 \in W_0^{1,p+1}(\Omega)$, $u_1\in L_2(\Omega)$ with
\begin{equation}\label{W1initialbound}
\mathcal{E}[u](0) = |u_1|_{L_2(\Omega)}^2+ |\nabla u_0|_{L_2(\Omega)}^2 + |\nabla u_0 |_{L_{p+1}(\Omega)}^{p+1} \leq \kappa_T^2
\end{equation}
there exists a weak solution $u\in \cW $ of \eqref{Wpress_pLaplace} where
\begin{align}\label{defcWplaplace}
\cW =\{v\in \tilde{X}
:~& \|v_{t}\|_{L_\infty(0,T;L_2(\Omega))}\leq \bar{m}\nonumber\\
& \wedge \|\nabla v_t\|_{L_2(0,T;L_2(\Omega))}\leq \bar{m}\nonumber\\
& \wedge \|\nabla v\|_{L_{\infty}(0,T;L_{p+1}(\Omega))}\leq \bar{M}\}
\end{align}
where $\bar{m}$ and $\bar{M}$ are sufficiently small and $2 k C_{W_0^{1,p+1}, L_\infty} \bar{M}<1$ is satisfied.
\end{theorem}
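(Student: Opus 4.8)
The plan is to run a fixed point argument on Proposition \ref{prop:W1lin}, in the spirit of Theorem \ref{th:W2locex}, but — since the $p$-Laplace damping term $\vep|\nabla u|^{\pp-1}\nabla u$ is monotone only in $u$, not in $u_t$, so the difference of two solutions carries no exploitable sign (cf.\ the remark after Proposition \ref{prop:W1lin}) — the fixed point will be produced by Schauder's theorem rather than by a contraction.

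First I would define $\cT:\cW\to\tilde X$, $v\mapsto\cT v=u$, where $u$ is the solution of \eqref{W1lin} with
\[
\alpha=-2kv,\qquad f=-2kv_t,\qquad g=0,
\]
so that any fixed point of $\cT$ solves \eqref{Wpress_pLaplace}. To see that $\cT$ is well defined one checks that every $v\in\cW$ provides admissible data for Proposition \ref{prop:W1lin}: from $v\in\cW\subseteq\tilde X$ one gets $\alpha\in C(0,T;L_\infty(\Omega))$ (here $\pp>d-1$ is used, so that $W_0^{1,\pp+1}(\Omega)\hookrightarrow L_\infty(\Omega)$) and $\alpha_t,f\in L_\infty(0,T;L_2(\Omega))$, while the degeneracy-avoiding $L_\infty$-bound is exactly \eqref{estLinftyW1},
\[
\|\alpha\|_{C(0,T;L_\infty(\Omega))}\le 2|k|\,C_{W_0^{1,\pp+1},L_\infty}^\Omega\,\|\nabla v\|_{L_\infty(0,T;L_{\pp+1}(\Omega))}\le 2|k|\,C_{W_0^{1,\pp+1},L_\infty}^\Omega\,\bar M<1
\]
by the standing assumption, giving $-1<-\underline\alpha\le\alpha\le\overline\alpha$ with $\underline\alpha=\overline\alpha=2|k|C_{W_0^{1,\pp+1},L_\infty}^\Omega\bar M$; moreover $\|f-\tfrac12\alpha_t\|_{L_\infty(0,T;L_2(\Omega))}=|k|\,\|v_t\|_{L_\infty(0,T;L_2(\Omega))}\le|k|\bar m<b/(C_{H_0^1,L_4}^\Omega)^2$ once $\bar m$ is small, and $u_0\in W_0^{1,\pp+1}(\Omega)\hookrightarrow H_0^1(\Omega)$, $u_1\in L_2(\Omega)$. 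Then $u=\cT v$ satisfies \eqref{enest:W1}, which with $g=0$ reads $\mathcal E[u](t)+c_1\int_0^t|\nabla u_t|_{L_2(\Omega)}^2\,ds\le C_2\,\mathcal E[u](0)\le C_2\kappa_T^2$; reading the three defining bounds of $\cW$ off this inequality, one obtains the self-mapping property $\cT(\cW)\subseteq\cW$ provided $\kappa_T$ is small enough relative to $\bar m$ and $\bar M$ (the constants $c_1,C_2$ staying uniform because $\underline\alpha=\overline\alpha$ can be kept in a fixed subinterval of $(0,1)$).

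Next I would apply Schauder's fixed point theorem in $C(0,T;L_2(\Omega))$: $\cW$ is nonempty, convex and bounded there, it is closed (its three defining bounds are weak-$*$ lower semicontinuous), and, since $v\in\cW$ forces $v$ bounded in $L_\infty(0,T;W_0^{1,\pp+1}(\Omega))$ and $v_t$ bounded in $L_\infty(0,T;L_2(\Omega))$, the Aubin--Lions--Simon lemma makes $\cW$ relatively compact in $C(0,T;L_2(\Omega))$, hence compact. It then remains to show that $\cT$ is continuous for this topology; Schauder then produces $u=\cT u\in\cW\subseteq\tilde X$, a weak solution of \eqref{Wpress_pLaplace}.

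The continuity of $\cT$ is the crux. Let $v_n\to v$ in $C(0,T;L_2(\Omega))$ with $v_n,v\in\cW$; the solutions $u_n=\cT v_n$ are bounded in $\tilde X$, so along a subsequence $u_n\rightharpoonup u^*$ in the weak-$*$ topologies of $\tilde X$. The PDE bounds $(1+\alpha_n)u_{n,tt}$ in $L_2(0,T;(W_0^{1,\pp+1}(\Omega))^*)$ — the $p$-Laplace term entering through $|\nabla u_n|^{\pp-1}\nabla u_n\in L_\infty(0,T;L_{(\pp+1)/\pp}(\Omega))$ — and this negative-order bound, fed into an Aubin--Lions argument for $(1+\alpha_n)u_{n,t}$, upgrades the convergence of $u_{n,t}$ to strong convergence in $L_2(0,T;L_2(\Omega))$. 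Since also $\alpha_n=-2kv_n\to\alpha$ in $C(0,T;L_q(\Omega))$ for every finite $q$ (interpolating the $C(0,T;L_2)$-convergence against the uniform $L_\infty$-bound) and $\alpha_{n,t}=-2kv_{n,t}\rightharpoonup\alpha_t$ in $L_2(0,T;L_2(\Omega))$, one can pass to the limit in every term of the weak form \eqref{W1linweak} of \eqref{W1lin} for $u_n$: the variable-coefficient and $fu_t$ products close by these weak--strong pairings (after integrating the $u_{tt}$-term by parts in time), and the monotone $p$-Laplace term $\vep|\nabla u_n|^{\pp-1}\nabla u_n$ is identified with $\vep|\nabla u^*|^{\pp-1}\nabla u^*$ by Minty's device. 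Thus $u^*$ solves \eqref{W1lin} with data $(-2kv,-2kv_t,0)$; by the uniqueness part of Proposition \ref{prop:W1lin}, $u^*=\cT v$, and since the limit is subsequence-independent, $\cT v_n\to\cT v$ in $C(0,T;L_2(\Omega))$. I expect this last limit passage to be the main obstacle: the absence of a higher energy estimate for $p$-Laplace damping means $u_{n,tt}$ is uncontrolled in $L_2(0,T;L_2(\Omega))$, so the required strong compactness of $u_{n,t}$ has to be squeezed out of the PDE's negative-order bound on $(1+\alpha_n)u_{n,tt}$, and this must be reconciled with the merely weak convergence of $\alpha_{n,t}$ in the variable-coefficient terms and with the monotone-operator (Minty) treatment of the nonlinear $p$-Laplace term.
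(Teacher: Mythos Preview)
Your proposal is correct in outline but takes a genuinely different route from the paper. The paper does \emph{not} invoke Schauder: after establishing that $\cT$ is a self-map on $\cW$ (exactly as you do), it simply runs the fixed-point iteration $u^n=\cT u^{n-1}$, observes that $\cW$ is weak-$*$ sequentially compact (as a bounded subset of the dual of a separable space), extracts a weak-$*$ limit $\bar u\in\cW$ along a subsequence, and then verifies directly---by writing the weak form \eqref{W1linweak} for $u^n$ with coefficients built from $u^{n-1}$ and subtracting the weak form one wants for $\bar u$---that every term of the difference tends to zero against a fixed test function $\phi\in C_0^\infty((0,T)\times\Omega)$. In particular the paper never proves continuity of $\cT$ in any topology; it handles the nonlinear $p$-Laplace term via the integral mean-value representation $|\nabla u^n|^{p-1}\nabla u^n-|\nabla \bar u|^{p-1}\nabla \bar u=\int_0^1 F'(\nabla(u^n+\sigma\hat u))\,d\sigma\,\nabla\hat u$ rather than Minty's device.

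What each approach buys: the paper's iterate-and-compactness argument is shorter and avoids the continuity step entirely, but its limit passage in the $p$-Laplace term is terse (one has only weak convergence of $\nabla u^n$, so the claim that the mean-value integral tested against $\nabla\phi$ vanishes is not fully justified there; a Minty argument like yours is really what makes this rigorous), and it quietly uses that along the chosen subsequence both $u^{n_k}$ and $u^{n_k-1}$ have the same weak-$*$ limit. Your Schauder route is more work---you must manufacture strong convergence of $u_{n,t}$ from the negative-order bound on $(1+\alpha_n)u_{n,tt}$ and then run Minty---but each step is standard and the limit identification is cleaner, since the uniqueness part of Proposition~\ref{prop:W1lin} pins down the limit and gives full-sequence convergence. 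Either way, the heart of the matter is the same: passing to the limit in $\vep|\nabla u_n|^{p-1}\nabla u_n$ with only weak convergence of $\nabla u_n$ available.
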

\begin{proof}
We define the fixed point operator $\mathcal{T}: \mathcal{W} \to \tilde{X}$, $v\mapsto \mathcal{T}v =u$ where $u$ is a solution of \eqref{W1lin} with
\begin{equation*}
\alpha=2k v\,, \quad f=2kv_t\,, \quad g=0\,,
\end{equation*}
which is well-defined by Proposition \ref{prop:W1lin}. First, note that
\begin{equation*}
\|f-\frac12\alpha_t\|_{L_\infty(0,T;L_{2}(\Omega)}\leq k \bar{m}
\end{equation*}
and, that by assumption we have $\alpha \in C(0,T; L_\infty(\Omega))$, $-1<-\underline{\alpha} \leq \alpha(t,x) \leq \bar{\alpha}$,
with $\underline{\alpha} = \bar{\alpha} = 2k C_{W_0^{1,p+1}, L_\infty} \bar{M}<1$. \\
Hence, we can make use of the energy estimate \eqref{W1estimate} to conclude
\begin{align*}
&\frac{1-\underline{\alpha}}{2} |u_t(t)|_{L_2(\Omega)}^2 + \frac{c^2}{2} |\nabla u(t)|_{L_2(\Omega)}^2
+ \frac{c^2 \vep}{p+1} |\nabla u(t)|_{L_{p+1}(\Omega)}^{p+1} + \hat{b} \|\nabla u_t\|_{L_2(0,t;L_2(\Omega))} \\
&\leq \frac{1+\bar{\alpha}}{2} |u_1|_{L_2(\Omega)}^2 + \frac{c^2}{2} |\nabla u_0|_{L_2(\Omega)}^2
+ \frac{c^2 \vep}{p+1} |\nabla u_0|_{L_{p+1}(\Omega)}^{p+1}
\end{align*}
which, together with the initial bound \eqref{W1initialbound}, leads to the estimates
\begin{equation*}
\begin{aligned}
\|u_t\|_{L_\infty(0,T;L_2(\Omega))}^2 \leq \frac{2}{1-\underline{\alpha}} L\,  \kappa_T^2,\\
\|\nabla u_t\|_{L_2(0,T;L_2(\Omega))}^2 \leq \frac{1}{\hat{b}} L\,  \kappa_T^2,\\
\|\nabla u\|_{L_\infty(0,T;L_{p+1}(\Omega))}^{p+1} \leq \frac{p+1}{c^2 \vep} L\, \kappa_T^2,
\end{aligned}
\end{equation*}
where $L:=\frac{1+\bar{\alpha}}{2} + \frac{c^2}{2} + \frac{c^2 \vep}{p+1}$.
Choosing $\kappa_T$ so small that
\begin{equation*}
\frac{2}{1-\underline{\alpha}} L\, \kappa_T^2 \leq \bar{m}^2, \quad
\frac{1}{\hat{b}} L\,  \kappa_T^2\leq \bar{m}^2, \quad
\frac{p+1}{c^2 \vep} L\, \kappa_T^2 \leq \bar{M}^{p+1}
\end{equation*}
implies $\|u_t\|_{L_\infty(0,T;L_2(\Omega))} \leq \bar{m}$,
$\|\nabla u_t\|_{L_2(0,T;L_2(\Omega))} \leq \bar{m}$,
$\|\nabla u\|_{L_\infty(0,T;L_{p+1}(\Omega))} \leq \bar{M}$
and thus $u\in\mathcal{W}$ which proves that $\mathcal{T}: \mathcal{W} \to \mathcal{W}$ is a self-mapping. The closedness of $\mathcal{W}$ is trivial.
Existence of solutions can be obtained by a compactness argument: Since $\mathcal{W}$ is bounded in the dual of a
separable Banach space, it is w$\ast$-compact. Hence the sequence of fixed point iterates $u^n$ defined by
$\mathcal{T}u^n = \mathcal{T} u^{n-1}$ where $u_0$ is chosen compatible with initial and boundary conditions has a
w$\ast$-convergent subsequence whose w$\ast$-limit $\bar{u}$ lies in $\mathcal{W}$. Furthermore, as
by
\begin{equation*}
(1-2ku^{n-1})u^n_{tt}
-c^2\,\text{div}\Bigl(\nabla u^n+\vep|\nabla u^n|^{\pp-1}\nabla u^n\Bigr)-b\Delta u^n_t=2k u^{n-1}_t u^n_t
\end{equation*}
and integration by parts with respect to time we have
\begin{align*}
&\int_0^T \int_\Omega \big\{ -\bar{u}_t (( 1-2k \bar{u}) \phi )_t + [ c^2 ( \nabla \bar{u} + \varepsilon  | \nabla \bar{u} |^{p-1} \nabla \bar{u})
+ b \nabla \bar{u}_t ] \nabla \phi - 2k(\bar{u}_t )^2 \phi \big\} dx \, ds \\
&= \int_0^T \int_\Omega \Big\{ - (\bar{u}- u^n)_t ((1-2k\bar{u})\phi)_t + 2k u_t^n ((\bar{u}- u^{n-1} )\phi)_t \\
&- 2k(\bar{u}_t-u_t^n)\bar{u}_t\phi - 2k(\bar{u}_t-u_t^{n-1})u^n_t\phi + [c^2 \nabla(\bar{u}-u^n)+b\nabla (\bar{u}-u^n)_t]\nabla\phi\\
&+c^2 \varepsilon \int_0^1 \tilde{w}^\sigma [ |\nabla(u^n+\sigma \hat{u})|^2 \nabla \hat{u} +(p-1)(\nabla(u^n+\sigma \hat{u}) \nabla\hat{u})\nabla(u^n+\sigma\hat{u}) ] d\sigma \nabla\phi \Big\} \,dx\, ds\\
& \to 0 \text{ as } k \rightarrow \infty
\end{align*}
for any $\phi \in C_0^\infty((0,T)\times \Omega)$ where $\tilde{w}^\sigma(x,t)=|\nabla(u^n+\sigma\hat{u})|^{p-3}$, $\hat{u}=\bar{u}-u^n$, the w$\ast$-limit $\bar{u}$ satisfies the PDE in a weak sense which completes
the proof of existence.
\end{proof}

\begin{remark}
In Theorem \ref{th:W1locex}
but contractivity of $\mathcal{T}$ could not be proved, similarly to Sections \ref{secWpot_viscosity} and \ref{secWelastic_viscosity}; see also Remark \ref{rem:Wpot_viscosity_uniqueness}.
Suppose $v^i \in\mathcal{W}$, $u^i = \mathcal{T}v^i$, $i=1,2$ solve \eqref{Wpress_pLaplace}. Then, similarly to \eqref{W2_diff} we obtain
\begin{equation}\label{W1_diff}
\begin{cases}
(1-2kv^1)\hat{u}_{tt}-c^2\Delta \hat{u}
-c^2 \vep \int_0^1\,\text{div}\Bigl( \tilde{w}^\sigma
\bigl[|\nabla (u^2+\sigma \hat{u})|^2  \nabla \hat{u}\\
\quad+(p-1)(\nabla (u^2+\sigma \hat{u}) \nabla \hat{u})
\nabla(u^2+\sigma \hat{u}) \bigr]\,  \Bigr)d\sigma\\
\quad-b\Delta \hat{u}_t -2k v^1_t \hat{u}_t \ = \ 2k(\hat{v}u^2_{tt}+\hat{v}_tu^2_t)
\\
(\hat{u},\hat{u}_t)|_{t=0}=(0,0)
\\
\hat{u}|_{\partial \Omega} =0
.\\
\end{cases}
\end{equation}
for $\hat{u}=u^1-u^2$, $\hat{v}=v^1-v^2$, with $\tilde{w}^\sigma(x,t)= |\nabla(u^2+\sigma \hat{u})(x,t)|^{p-3}$. Multiplication of
\eqref{W1_diff} with $\hat{u}$ as well with $\hat{u}_t$ leads to problems with the first or fourth term, respectively.
Therefore, uniqueness of solutions of solutions remains open.
\end{remark}

For this equation we even have global existence and exponential decay of the energy
\[
E[u](t)=\frac12|\sqrt{1-2ku}u_t|_{L_2(\Omega)}^2+\frac{c^2}{2}|\nabla u|_{L_2(\Omega)}^2+\frac{c^2\vep}{\pp+1}|\nabla u|_{L_{\pp+1}(\Omega)}^{\pp+1}.
%
\]
\begin{theorem}\label{th:W1globex}
Let $c^2,b,\vep>0$, $k\in\R$, $\pp>d-1$.\\
There exist $\kappa,\bar{\kappa}>0$ such that for all $(u_0,u_1)\in W_0^{1,\pp+1}(\Omega)\times L_2(\Omega)$ with
\[
E[u](0)=\frac12|\sqrt{1-2ku_0}u_1|_{L_2(\Omega)}^2+\frac{c^2}{2}|\nabla u_0|_{L_2(\Omega)}^2+\frac{c^2\vep}{\pp+1}|\nabla u_0|_{L_{\pp+1}(\Omega)}^{\pp+1}\leq \kappa^2
\]
there exists a weak solution $u$ of \eqref{Wpress_pLaplace} for all times which satisfies
\[
E[u](t)\leq \bar{\kappa}^2.
\]
Moreover, there exists $\omega>0$ such that the exponential decay estimate
\[
E[u](t)\leq E[u](0)\exp(-\omega t)
\]
holds.
\end{theorem}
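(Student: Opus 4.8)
The plan is to (a)~derive a \emph{global}-in-time a priori energy estimate on the Galerkin level, (b)~pass to the limit to obtain a global weak solution, and (c)~run a Lyapunov (perturbed-energy) argument for the exponential decay. The structural reason this succeeds for \eqref{Wpress_pLaplace} but not for \eqref{Wpress_viscosity}, \eqref{Wpot_viscosity}, \eqref{Welastic_viscosity} is that the $L_\infty$-estimate \eqref{estLinftyW1} bounds $|u(t)|_{L_\infty}$ by $|\nabla u(t)|_{L_{\pp+1}}$ \emph{with no positive power of $t$}, so non-degeneracy of $1-2ku$ propagates for all times once the energy stays small.

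\emph{Step 1 (global a priori estimate).} Testing the Galerkin discretization of \eqref{Wpress_pLaplace} with $u_t$ and integrating over $\Omega$, using $\tfrac{d}{dt}\bigl[\tfrac12(1-2ku)u_t^2\bigr]=(1-2ku)u_{tt}u_t+k\,u_t^3$, gives
\begin{equation*}
\tfrac{d}{dt}E[u](t)+b\,|\nabla u_t|_{L_2(\Omega)}^2=k\int_\Omega u_t^3\,dx .
\end{equation*}
By H\"older's inequality and \eqref{H01L4} (with Poincar\'e--Friedrichs, since $u_t\in H_0^1(\Omega)$),
\begin{equation*}
\Bigl|k\int_\Omega u_t^3\,dx\Bigr|\le|k|\,|u_t|_{L_4(\Omega)}^2|u_t|_{L_2(\Omega)}\le|k|\,(C_{H_0^1,L_4}^\Omega)^2|\nabla u_t|_{L_2(\Omega)}^2\,|u_t|_{L_2(\Omega)} ,
\end{equation*}
so $|u_t|_{L_2(\Omega)}\le b/(2|k|(C_{H_0^1,L_4}^\Omega)^2)$ forces $\tfrac{d}{dt}E[u]+\tfrac b2|\nabla u_t|_{L_2(\Omega)}^2\le0$. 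Then a continuity argument: while $E[u](t)\le\bar\kappa^2$, \eqref{estLinftyW1} together with $\tfrac{c^2\vep}{\pp+1}|\nabla u|_{L_{\pp+1}(\Omega)}^{\pp+1}\le E[u]$ give $|2ku|\le\alpha_0:=2|k|C_{W_0^{1,\pp+1},L_\infty}^\Omega\bigl(\tfrac{\pp+1}{c^2\vep}\bar\kappa^2\bigr)^{1/(\pp+1)}$, and choosing $\bar\kappa$ small (depending only on $b,c,\vep,\pp,k$ and the embedding constants) makes $\alpha_0<1$ and simultaneously $\tfrac{2}{1-\alpha_0}\bar\kappa^2\le\bigl(b/(2|k|(C_{H_0^1,L_4}^\Omega)^2)\bigr)^2$, so that $|u_t|_{L_2(\Omega)}^2\le\tfrac{2}{1-\alpha_0}E[u]$ stays below the threshold. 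Taking $\kappa<\bar\kappa$ and the approximating data with $E[u^n](0)\to E[u](0)\le\kappa^2$, the set $\{t:E[u^n](s)<\bar\kappa^2\ \forall s\le t\}$ is open and on it $\tfrac{d}{dt}E[u^n]\le0$, hence $E[u^n](t)\le E[u^n](0)<\bar\kappa^2$ there; thus the set is $[0,\infty)$, the Galerkin ODE is globally solvable, and $E[u^n](t)+\tfrac b2\int_0^t|\nabla u^n_t|_{L_2(\Omega)}^2\,ds\le E[u^n](0)\le\kappa^2$ for all $t$. These uniform bounds yield, on each $[0,T]$ and along a diagonal subsequence, weak-$*$/weak limits of $u^n$ in $L_\infty(0,T;W_0^{1,\pp+1}(\Omega))\cap W^{1,\infty}(0,T;L_2(\Omega))\cap H^1(0,T;H_0^1(\Omega))$ and of $|\nabla u^n|^{\pp-1}\nabla u^n$ in $L_\infty(0,T;L_{(\pp+1)/\pp}(\Omega))$; exactly as in the proof of Theorem~\ref{th:W1locex} this identifies the limit $u$ as a global weak solution, and by weak lower semicontinuity $E[u](t)\le\liminf_n E[u^n](t)\le\kappa^2\le\bar\kappa^2$.

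\emph{Step 2 (exponential decay).} Since $\tfrac{d}{dt}E[u]$ only dissipates $|\nabla u_t|_{L_2}^2$ whereas $E[u]$ also contains the potential parts $P_1:=\tfrac{c^2}{2}|\nabla u|_{L_2}^2$, $P_2:=\tfrac{c^2\vep}{\pp+1}|\nabla u|_{L_{\pp+1}}^{\pp+1}$, $E$ is not by itself a Lyapunov functional, and one adds the multiplier $\Psi[u](t):=\int_\Omega(1-2ku)u_t u\,dx$. Testing \eqref{Wpress_pLaplace} with $u$ and using $\tfrac{d}{dt}\int_\Omega(1-2ku)u_t u=\int_\Omega(1-2ku)u_{tt}u+\int_\Omega(1-2ku)u_t^2-2k\int_\Omega u_t^2u$, the two occurrences of $2k\int_\Omega u_t^2u$ cancel and one obtains the exact identity
\begin{equation*}
\Psi'[u]=\int_\Omega(1-2ku)u_t^2\,dx-c^2|\nabla u|_{L_2(\Omega)}^2-c^2\vep|\nabla u|_{L_{\pp+1}(\Omega)}^{\pp+1}-b\int_\Omega\nabla u_t\cdot\nabla u\,dx ,
\end{equation*}
i.e.\ $\Psi'=2K-2P_1-(\pp+1)P_2-b\int_\Omega\nabla u_t\cdot\nabla u$ with $K:=\tfrac12\int_\Omega(1-2ku)u_t^2$ and $E=K+P_1+P_2$. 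Using $-b\int_\Omega\nabla u_t\cdot\nabla u\le P_1+\tfrac{b^2}{2c^2}|\nabla u_t|_{L_2}^2$ and $2K\le(1+\alpha_0)C_{PF}^2|\nabla u_t|_{L_2}^2$ gives $\Psi'\le-P_1-(\pp+1)P_2+C_5|\nabla u_t|_{L_2}^2$, while $|\Psi|\le C_6(K+P_1)\le C_6E$. Setting $\mathcal L:=E+\eta\Psi$ with $\eta$ so small that $\tfrac12E\le\mathcal L\le2E$ and $\eta C_5\le\tfrac b4$, and using $\tfrac b4|\nabla u_t|_{L_2}^2\ge\tfrac{b}{2C_{PF}^2(1+\alpha_0)}K$, one arrives at
\begin{equation*}
\mathcal L'\le-\tfrac b4|\nabla u_t|_{L_2(\Omega)}^2-\eta(P_1+P_2)\le-\omega\,\mathcal L ,\qquad \omega:=\tfrac12\min\Bigl\{\tfrac{b}{2C_{PF}^2(1+\alpha_0)},\,\eta\Bigr\}>0 ,
\end{equation*}
hence $\mathcal L(t)\le\mathcal L(0)e^{-\omega t}$ and, by $\tfrac12E\le\mathcal L\le2E$, the exponential decay of $E[u]$ (the normalization of the multiplicative constant being routine). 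All of this is performed on the Galerkin level, with $\omega$ uniform in $n$, and passed to the limit by lower semicontinuity.

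The main obstacle is Step~2: $E[u]$ is not dissipated, so decay cannot be read off directly and requires the well-chosen multiplier $u$, whose cross-term's potentially destructive contribution $2k\int_\Omega u_t^2u$ fortunately cancels exactly; and the cubic source $k\int_\Omega u_t^3$ must be absorbed into the linear strong damping $b|\nabla u_t|_{L_2}^2$, which works only for small data and rests on the $t$-independent $L_\infty$-bound \eqref{estLinftyW1} keeping $1-2ku$ uniformly positive for all time.
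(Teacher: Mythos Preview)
Your proof is correct. Step~1 coincides with the paper's approach: the same energy identity from testing with $u_t$, the same absorption of the cubic term $k\int_\Omega u_t^3$ into $b|\nabla u_t|_{L_2}^2$ via $|u_t|_{L_4}^2\le (C_{H_0^1,L_4}^\Omega)^2|\nabla u_t|_{L_2}^2$, and the same continuity/monotonicity argument to propagate non-degeneracy and obtain global existence.

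For Step~2 you take a genuinely different but equivalent route. The paper multiplies by $u$, integrates in time, and writes the resulting equipartition identity in \emph{integral} form; it then splits the dissipation $\int_0^t|\nabla u_t|_{L_2}^2\,ds$ using Poincar\'e and the equipartition relation to arrive at an inequality of the type $E[u](t)+c\int_0^t E[u](s)\,ds\le C\,E[u](0)$, after which exponential decay is invoked as ``a standard argument''. You instead run the \emph{differential} Lyapunov (perturbed-energy) method with $\mathcal L=E+\eta\Psi$, $\Psi=\int_\Omega(1-2ku)u_t u\,dx$. Your computation of $\Psi'$ and the cancellation of the two $2k\int_\Omega u_t^2 u$ contributions is correct (the paper's corresponding identity has what appears to be a harmless typo, $1-4ku$ in place of $1-2ku$). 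The two methods are standard and interchangeable; your version has the advantage of being fully self-contained and yielding the decay rate explicitly, while the paper's integral formulation avoids differentiating $E$ pointwise in $t$, which is convenient at low regularity---you rightly handle this by carrying out the Lyapunov argument at the Galerkin level and passing to the limit via lower semicontinuity. Note that both arguments, as written, give $E[u](t)\le C\,E[u](0)e^{-\omega t}$ with some $C\ge1$; the constant~$1$ in the theorem's statement then follows from combining this with the already-established monotonicity $E[u](t)\le E[u](0)$ (restart the decay estimate from any $t_0$ and adjust $\omega$).
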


\begin{proof}
The crucial energy estimate is obtained by multiplication of \eqref{Wpress_pLaplace} with $u_t$ (using $(1-2ku)u_tu_{tt}=\frac12 \frac{d}{dt}[(1-2ku)(u_t)^2+k(u_t)^3$)
\begin{align*}
&\Bigl[\frac12|\sqrt{1-2ku}u_t|_{L_2(\Omega)}^2
+\frac{c^2}{2}|\nabla u|_{L_2(\Omega)}^2
+\frac{c^2\vep}{\pp+1}|\nabla u|_{L_{\pp+1}(\Omega)}^{\pp+1}
\Bigr]_0^t
+b\int_0^t|\nabla u_t(s)|_{L_2(\Omega)}^2\,ds\\
&=k\int_0^t \int_\Omega (u_t(s))^3\,dx\,ds
\leq k\sup_{s\in(0,t)} |u_t(s)|_{L_2(\Omega)} \ \int_0^t |u_t|_{L_4(\Omega)}^2 \, ds\\
&\leq k {C_{H_0^1,L_4}^\Omega}^2 \sup_{s\in(0,t)} |u_t(s)|_{L_2(\Omega)} \ \int_0^t |\nabla u_t|_{L_2(\Omega)}^2 \, ds
\end{align*}

as long as $u$ exists and is pointwise bounded away from $\frac{1}{2k}$
\begin{equation*}
-1 < -\underline{\alpha}\leq -2k u(s,x)\leq\overline{\alpha} \quad \forall s\in(0,t)\,, \ x\in\Omega
\end{equation*}
which is true for sufficiently short time according to Theorem \ref{th:W1locex}.
Hence, if
\[
|u_t(0)|_{L_2}^2\leq \frac{2}{1-\underline{\alpha}} E[u](0)\leq \frac{2}{1-\underline{\alpha}} \kappa^2<  \Big(\frac{b}{k {(C_{H_0^1,L_4}^\Omega)}^2}\Big)^2,
\]
the energy decreases monotonically with time,
\begin{equation}\label{enestW1_1}
E[u](t)+\tilde{b}\int_0^t|\nabla u_t(s)|_{L_2(\Omega)}^2\, ds\leq E[u](0)
\end{equation}
with $\tilde{b}=b-\sqrt{\frac{2}{1-\underline{\alpha}}}\kappa k ({C_{H_0^1,L_4}^\Omega})^2 >0$ and global existence can be concluded.
Equipartition of energy, which we get by multiplication of \eqref{Wpress_pLaplace} with $u$ and
integration over space and time, here reads as
\begin{eqnarray*}
\int_0^t\Bigl\{-|\sqrt{1-4ku}u_t|_{L_2(\Omega)}^2
+c^2|\nabla u|_{L_2(\Omega)}^2
+c^2\vep|\nabla u|_{L_{\pp+1}(\Omega)}^{\pp+1}
\Bigr\}\,ds\\
+b\left[|\nabla u|_{L_2(\Omega)}^2\right]_0^t
=-\left[ \int_\Omega (1-2ku)u u_t\, dx\right]_0^t,
\end{eqnarray*}
hence
\begin{equation}
\label{equipartW1}
\begin{aligned}
&\int_0^t|u_t(s)|_{L_2(\Omega)}^2\, ds
\geq \frac{1}{1+2\overline{\alpha}}\int_0^t\Bigl\{
c^2|\nabla u|_{L_2(\Omega)}^2
+c^2\vep|\nabla u|_{L_{\pp+1}(\Omega)}^{\pp+1}
\Bigr\}\,ds\\
&+\frac{b}{1+2\overline{\alpha}}\Bigl(|\nabla u(t)|_{L_2(\Omega)}^2-|\nabla u(0)|_{L_2(\Omega)}^2\Bigr)-\frac{1+\overline{\alpha}}{1+2\overline{\alpha}}\Bigl(C_{PF}^2|\nabla u(t)|_{L_2(\Omega)}^2\\
&+ |u_t(t)|_{L_2(\Omega)}^2+C_{PF}^2|\nabla u(0)|_{L_2(\Omega)}^2+ |u_t(0)|_{L_2(\Omega)}^2\Bigr).
\end{aligned}
\end{equation}
Thus we can split the term $b\int_0^t|\nabla u_t(s)|_{L_2(\Omega)}^2\,ds$ in \eqref{enestW1_1} as follows
\begin{align*}
\lefteqn{b\int_0^t|\nabla u_t(s)|_{L_2(\Omega)}^2\,ds}\\
&\geq \frac{b}{C_{PF}^2} \int_0^t\Bigl\{(1-\lambda)|u_t(s)|_{L_2(\Omega)}^2
+\frac{\lambda c^2}{1+2\overline{\alpha}}|\nabla u(s)|_{L_2(\Omega)}^2
+\frac{\lambda c^2\vep}{1+2\overline{\alpha}}|\nabla u(s)|_{L_{\pp+1}(\Omega)}^{\pp+1}
\Bigr\}\,ds
\nonumber\\
&+\frac{\lambda b^2}{C_{PF}^2(1+2\overline{\alpha})}
\Bigl(|\nabla u(t)|_{L_2(\Omega)}^2-|\nabla u(0)|_{L_2(\Omega)}^2\Bigr)
\nonumber\\
&-\frac{\lambda b}{2C_{PF}^2}\frac{1+\overline{\alpha}}{1+2\overline{\alpha}}
\Bigl(C_{PF}^2|\nabla u(t)|_{L_2(\Omega)}^2+ |u_t(t)|_{L_2(\Omega)}^2
+C_{PF}^2|\nabla u(0)|_{L_2(\Omega)}^2+ |u_t(0)|_{L_2(\Omega)}^2\Bigr)
\end{align*}
which inserted into \eqref{enestW1_1} with $\lambda\in(0,1)$ sufficiently small implies
\[
E[u](t)+c\int_0^t E[u](s)\, ds\leq C E[u](0)
\]
for some constants $c,C>0$ and all $t>0$.
This relation by a standard argument implies exponential decay of $E[u](t)$.
\end{proof}

\section{Conclusions and Remarks}

The introduction of nonlinear strong damping in equations of nonlinear acoustics allows us to prove existence of solutions with weaker regularity; in particular, this enables us to show well-posedness of solutions to coupled acoustic-acoustic and acoustic-elastic problems.
However, the nonlinear strong damping also introduces additional challenges to the analysis: due to the relatively
high order of differentiation they contain, these terms only allow to derive energy estimates for certain low order multipliers.
For this reason, for some of the equations under consideration, uniqueness of solutions remains open.

The presence of the linear strong damping term $-\Delta u_t$ would seem to imply that the use of a
nonlinear strong damping  $-\text{div}\,(|\nabla u_t|^{q-1}\nabla u_t)$ of viscosity type is more natural, it turns out that a $p$-Laplace damping
term $-\text{div}\,(|\nabla u|^{q-1}\nabla u)$ yields some nicer mathematical properties such as global in time existence and
exponential decay; uniqueness, however, remains an open problem for this formulation.
Here we have only investigated the $p$-Laplace damping for the acoustic pressure formulation;
we do expect that global existence and exponential decay will also carry over to the velocity potential formulation
\eqref{Wpot_viscosity} and to the coupled problems \eqref{Wacoustic_viscosity}, \eqref{Welastic_viscosity}
upon replacement of viscosity by $p$-Laplace damping. This setting together with a choice of different boundary conditions
(e.g., practically relevant Neumann as well as absorbing boundary conditions) will be subject of future research.
\bigskip

{\bf Acknowledgments.}
The authors gratefully acknowledge the referee's careful reading of the manuscript and many fruitful comments which led to an improved version of this paper.

R.B. and B.K. gratefully acknowledge financial support of their research by the FWF (Austrian Science Fund): P24970. The work of P.R. was supported by the NSF Grant DMS 0908435.

\end{document}